\author{Thomas John Baird and Michael Lennox Wong}
\title{E-polynomials of character varieties for real curves}
\definecolor{tocolor}{rgb}{.1,.1,.5}
\definecolor{urlcolor}{rgb}{.2,.2,.6}
\definecolor{linkcolor}{rgb}{.1,.4,.6}
\definecolor{citecolor}{rgb}{.6,.3,.1}
\newcommand{\E}{\mathbb{E}}
\newcommand{\F}{\mathbb{F}}
\newcommand{\Z}{\mathbb{Z}}
\newcommand{\C}{\mathbb{C}}
\newcommand{\Q}{\mathbb{Q}}
\newcommand{\PP}{\mathscr{P}}
\newcommand{\Nm}{\textnormal{Nm}}
\newcommand{\Tr}{\textnormal{Tr}}
\newcommand{\GL}{\textnormal{GL}}
\newcommand{\bmu}{\boldsymbol{\mu}}
\newcommand{\brho}{\boldsymbol{\rho}}
\newcommand{\norm}[1]{\left\lVert#1\right\rVert}
\newcommand{\s}{\textnormal{s}}
\newcommand{\odd}{\textnormal{odd}}
\newcommand{\hide}[1]{}
\newcommand{\sa}{\mathsf{a}}
\newcommand{\ssb}{\mathsf{b}}
\DeclareMathOperator{\Irr}{\textnormal{Irr}}
\DeclareMathOperator{\Hom}{\textnormal{Hom}}
\DeclareMathOperator{\supp}{\textnormal{supp}}
\newtheorem{thm}{Theorem}[section]
\newtheorem{cor}[thm]{Corollary}
\newtheorem{lem}[thm]{Lemma}
\newtheorem{prop}[thm]{Proposition}
\newtheorem{rmk}[thm]{Remark}
\theoremstyle{definition}
\newtheorem{eg}{Example}[section]
\numberwithin{equation}{section}
\begin{document}

\begin{abstract}
We calculate the E-polynomial for a class of (complex) character varieties $\mathcal{M}_n^{\tau}$ associated to a genus $g$ Riemann surface $\Sigma$ equipped with an orientation reversing involution $\tau$. Our formula expresses the generating function $\sum_{n=1}^{\infty}  E(\mathcal{M}_n^{\tau}) T^n$ as the plethystic logarithm of a product of sums indexed by Young diagrams. The proof uses point counting over finite fields, emulating Hausel and Rodriguez-Villegas \cite{HRV}.  
\end{abstract}

\maketitle


\section{Introduction}

Let $\Sigma$ be a compact Riemann surface of genus $g$ and let $ \Sigma^{''} := \Sigma \setminus \{p,p'\}$ be that same surface with two points removed. Given a field $\F$ and a primitive $2n$th root of unity $\xi \in \F$ consider the representation variety $$\mathcal{R}_n(\F)  := \Hom_{\xi}( \pi_1(\Sigma''), GL_n(\F))$$ of homomorphisms from the fundamental group $\pi_1(\Sigma'')$ to $GL_n(\F)$ which sends the positively oriented loops around both $p$ and $p'$ to $\xi I_n$, where $I_n$ is the identity matrix. When $\F = \C$, define the character variety 
\begin{equation}\label{charvarfirst}
 \mathcal{M}_n :=  \mathcal{R}_n(\C)/\!/ GL_n(\C)
 \end{equation}
to be the GIT quotient of $\mathcal{R}_n(\C)$ under the natural conjugation action by $GL_n(\C)$. This $\mathcal{M}_n$ is a smooth, affine, complex symplectic variety.  If  $\xi = e^{2\pi i d/2n}$, then $\mathcal{M}_n$ is naturally diffeomorphic to the moduli space of Higgs bundles of degree $d$ and rank $n$ over $\Sigma$ via the non-Abelian Hodge correspondence \cite{Hit, S}. 

If $\Sigma$ is endowed with an anti-holomorphic involution $\tau$ that interchanges $p$ and $p'$, then there is an associated character variety $\mathcal{M}_n^{\tau}$ introduced by Baraglia and Schaposnik \cite{BS} and by Biswas, Garc\'ia-Prada, and Hurtubise \cite{BGP}, which embeds as a holomorphic Lagrangian submanifold in $\mathcal{M}_n$ (also known as an ABA-brane \cite{BS}). If $\Sigma^\tau \neq \emptyset$, which we will usually assume, then $\mathcal{M}_n^\tau$ is equal to the the fixed point set of an anti-symplectic involution of $\mathcal{M}_n$. Under the non-Abelian Hodge correspondence, $\mathcal{M}_n^{\tau}$ is sent to the set of real points in the moduli space of Higgs bundles over the real curve $(\Sigma, \tau)$. 

In the present paper, we calculate the E-polynomial of $\mathcal{M}_n^{\tau}$  for all $n \geq 1$.  Our calculation reduces to counting points over finite fields, emulating the calculation of the E-polynomial of $\mathcal{M}_n$ by Hausel and Rodriguez-Villegas \cite{HRV}. 

The character variety $\mathcal{M}_n^{\tau}$ is defined using the orbifold fundamental group $ \widetilde{\pi_1(\Sigma'')}$,  which is the fundamental group of the homotopy quotient of $\Sigma''$ with respect to the $\Z/2$ action generated by $\tau$. This fits into a short exact sequence $ 1 \rightarrow \pi_1(\Sigma'') \rightarrow  \widetilde{\pi_1(\Sigma'')}   \rightarrow \Z/2 \rightarrow 0$ which splits if $\Sigma^\tau \neq \emptyset$.
Let $\widetilde{GL}_n(\F) = GL_n(\F) \rtimes \Z/2$ be the semi-direct product determined by the Cartan involution $A \mapsto (A^T)^{-1}$ and let $\mathcal{R}_n^\tau(\F)$ denote the representation variety of   homomorphisms $\tilde{\phi}$ that extend to a commutative diagram
$$ \xymatrix{  1  \ar[r] \ar[d] & \pi_1(\Sigma'') \ar[r] \ar[d]^{\phi} &  \widetilde{\pi_1(\Sigma'')}  \ar[r] \ar[d]^{\tilde{\phi}}  & \Z/2  \ar[r] \ar[d]^=  & 0 \ \ar[d]  \\ 1  \ar[r] &  GL_n(\F)  \ar[r] & \widetilde{GL}_n(\F)  \ar[r]  & \Z/2  \ar[r] & 0   .} $$ 
Define 
\begin{equation}\label{charvarrealfirst}
 \mathcal{M}_n^{\tau} := \mathcal{R}_n^\tau(\C) /\!/ GL_n(\C)
 \end{equation}
the GIT quotient under conjugation by $GL_n(\C) \leq \widetilde{GL}_n(\C)$. The forgetful map $\mathcal{M}_n^\tau \rightarrow \mathcal{M}_n$ is an embedding.

One of the main results of \cite{HRV} is a formula for the E-polynomial (or Serre characteristic) of $\mathcal{M}_n$. They prove the following remarkable generating function identity
\begin{equation}\label{HRVformula}
\frac{1}{(q-1)^2}\sum_{n=1}^{\infty}  \frac{ E(\mathcal{M}_n;q)}{ q^{n^2(g-1)}} T^n =    \mathrm{Log}\Big( \sum_{\lambda \in \PP} \mathcal{H}_{\lambda}^{2g-2}(q) T^{|\lambda|}\Big).
\end{equation}
In this expression $E(\mathcal{M}_n;q)$ is a polynomial in $q$ from which the E-polynomial of $\mathcal{M}_n$ is recovered by setting $q=xy$; the function $\mathrm{Log}$ is the plethystic logarithm; and $\mathcal{H}_{\lambda}(q)$ is the normalized hook polynomial associated to a partition (or Young diagram) $\lambda \in \PP$ (see \S \ref{E-polysec} for details). Our main result is an analogue of this formula for the E-polynomial of $\mathcal{M}_n^\tau$.  

\begin{thm}\label{BigThm}
Let $(\Sigma, \tau)$ be a genus $g$ Riemann surface equipped with an anti-holomorphic involution such that $\Sigma^\tau$ has $r$-many path components, with $r\geq 1$. \footnote{ The case $\Sigma^\tau = \emptyset$ has been considered by Letellier and Rodriguez-Villegas \cite{LR}. They derived a formula for the E-polynomial of $\mathcal{M}_n^{\tau}$ in that case, and produced a conjectural formula for the mixed Hodge polynomial which was recently disproven by Scognamiglio \cite{Sc}. Since we focus on the case $\Sigma^\tau \neq \emptyset$, their work complements ours very nicely.}
Then
\begin{equation}\label{GenFunctBig}
\frac{2}{q-1}  \sum_{n=1}^{\infty}  \frac{E(\mathcal{M}_n^{\tau};q)}{(-q^{\frac{1}{2}})^{n^2(g-1)}} T^n = \mathrm{Log}  \prod_{k=0}^{\infty} \left( \frac{ \sum_{\lambda \in \PP} (\sa^+_{\lambda})^r \mathcal{H}_{\lambda'}^{g-1}(q^{2^k}) T^{2^k|\lambda|}}{ \sum_{\lambda \in \PP} (\sa^-_{\lambda})^r \mathcal{H}_{\lambda'}^{g-1}(q^{2^k}) T^{2^k|\lambda|}}\right)^{ \frac{1}{2^k}}.
\end{equation}
where if $\lambda = (1^{s_1} 2^{s_2} ...)$ then 
\begin{align}\label{a+lam}
\sa^+_{\lambda}&= (s_1+1)(s_2+1)...\\\label{a-lam}
\sa^-_{\lambda}& = \begin{cases} 1 & \text{ if the conjugate partition $\lambda'$ has only even parts} \\ 0 & \text{otherwise.} \end{cases}
\end{align}
\end{thm}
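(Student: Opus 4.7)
The approach is to emulate the strategy of Hausel and Rodriguez-Villegas \cite{HRV} in the real setting: compute $|\mathcal{M}_n^{\tau}(\mathbb{F}_q)|$ via a twisted Frobenius formula and recover the E-polynomial through Katz's theorem. As a preliminary step, one must verify that $\mathcal{M}_n^{\tau}$ spreads to a polynomial-count variety over $\mathbb{Z}[\xi, 1/2n]$; this should follow the smoothness and GIT arguments of \cite{HRV} closely. The normalization $(-q^{\frac{1}{2}})^{n^2(g-1)}$ appearing in (\ref{GenFunctBig}) signals that Frobenius can act on the cohomology of $\mathcal{M}_n^{\tau}$ with half-integer weights and a sign twist, as is natural on an anti-holomorphic fixed locus of a complex symplectic variety.

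Because $\Sigma^{\tau} \neq \emptyset$, the defining exact sequence for $\widetilde{\pi_1(\Sigma'')}$ splits, so the orbifold fundamental group can be presented by the usual $2g$ surface generators, the boundary loops $c, c'$, and one involution generator $\sigma_i$ for each of the $r$ components of $\Sigma^{\tau}$. A point of $\mathcal{R}_n^{\tau}(\mathbb{F}_q)$ is then a representation $\phi \colon \pi_1(\Sigma'') \to GL_n(\mathbb{F}_q)$ sending $c, c'$ to $\xi I_n$, together with $r$ elements of the non-identity coset of $\widetilde{GL}_n(\mathbb{F}_q)$ intertwining $\phi$ with its $\tau$-pullback in the prescribed way. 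Applying the classical Frobenius character sum to this presentation inside $\widetilde{GL}_n(\mathbb{F}_q)$ converts the count to a weighted sum over irreducible characters of $\widetilde{GL}_n(\mathbb{F}_q)$, in which the contribution of each character appears to the $r$-th power, one factor per fixed circle.

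The heart of the proof is the character theory of $\widetilde{GL}_n(\mathbb{F}_q) = GL_n(\mathbb{F}_q) \rtimes \mathbb{Z}/2$. By Clifford theory the irreducibles fall into two families: each self-dual character $\chi_\lambda$ of $GL_n(\mathbb{F}_q)$ admits two extensions $\chi_\lambda^{\pm}$ to $\widetilde{GL}_n$, while non-self-dual $GL_n$-characters induce to irreducibles vanishing on the twisted coset and so contribute nothing to the twisted count. The central values $\chi_\lambda(\xi I_n)$ are computed in \cite{HRV} and produce the normalized hook polynomial $\mathcal{H}_{\lambda'}(q)$, here with the exponent halved relative to HRV (reflecting that $\mathcal{M}_n^{\tau}$ is morally ``half'' of $\mathcal{M}_n$). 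The new and essential ingredient is to evaluate the extensions $\chi_\lambda^{\pm}$ on involutions in the twisted coset of $\widetilde{GL}_n(\mathbb{F}_q)$ and sum over their conjugacy classes. I expect these sums to yield exactly the combinatorial weights $a^+_\lambda = \prod_i(s_i+1)$ and $a^-_\lambda = \mathbf{1}_{\lambda' \text{ has all even parts}}$, which have the representation-theoretic interpretation of counting orthogonal- and symplectic-type $GL_n$-invariant bilinear forms on the representation indexed by $\lambda$. Passing from $|\mathcal{R}_n^{\tau}(\mathbb{F}_q)|$ to $|\mathcal{M}_n^{\tau}(\mathbb{F}_q)|$ and organizing the result by $\lambda$ then produces the numerator and denominator of (\ref{GenFunctBig}); the outer plethystic logarithm and the infinite product $\prod_{k \geq 0}(\cdot)^{1/2^k}$ with Adams substitutions $(q,T) \mapsto (q^{2^k}, T^{2^k})$ are the standard plethystic signature of the order-two symmetry in $\widetilde{GL}_n$ which must be ``unfolded'' to convert a stacky into a variety count.

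The principal obstacle I anticipate is precisely this last character-theoretic computation: evaluating $\chi_\lambda^{\pm}(\sigma)$ for $\sigma$ in the non-identity coset of $\widetilde{GL}_n(\mathbb{F}_q)$ and matching the class sums with the clean combinatorial formulas $a^{\pm}_\lambda$. This extends the HRV central-value calculation to a disconnected group, and should require either Shintani descent from $GL_n(\mathbb{F}_{q^2})$ or a direct Deligne--Lusztig computation on $\widetilde{GL}_n$. Once the twisted character evaluation is in hand, extracting the clean Adams-twist product on the right-hand side of (\ref{GenFunctBig}) should be accessible through standard plethystic manipulations, but will demand careful bookkeeping of the $2^k$-tower of contributions.
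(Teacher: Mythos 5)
Your high-level strategy matches the paper's: count $\F_q$-points of the representation variety, invoke Katz's theorem, and organize the resulting character sum by partitions to extract the plethystic identity. You also correctly guess the role of Huisman's presentation (one twisted generator per component of $\Sigma^\tau$, hence the $r$-th power of the multiplicities) and the combinatorial answer $a^\pm_\lambda$. But the heart of the proof is missing, and you acknowledge as much: everything hinges on actually computing the weights attached to each irreducible character, and your proposal defers this to an unexecuted Shintani-descent or Deligne--Lusztig computation on the disconnected group $\widetilde{GL}_n(\F_q)$. The paper never works with $\Irr(\widetilde{GL}_n(\F_q))$ at all. Instead, Huisman's relations $b_i^2=1$, $a_ib_i=b_ia_i$ translate directly into the class function $F(A) = |\{B : B=B^T,\ ABA^T=B\}|$ on the \emph{connected} group $GL_n(\F_q)$ (the permutation character on nondegenerate symmetric bilinear forms), together with Gow's function $N$ with $N*N=C$; the point count is then the ordinary convolution $F^{*r}*N^{*(g-r+1)}(\xi I_n)$, and the required weights are the multiplicities $a_\chi = \langle F,\chi\rangle$. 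Computing these occupies Sections 5--8 of the paper: an explicit evaluation of $F$ on every conjugacy class via Milnor's classification of isometries over finite fields, a delicate degree-counting and inclusion--exclusion argument with Green's character formula showing that only ``restricted symmetric types'' survive as $q\to\infty$ (and that the limit is legitimate for $\mathrm{char}(q)\gg 1$), and finally the Schur-function/Pieri-rule identities that convert the resulting coefficients $C_\pi$, $D_\pi$ into the closed forms (\ref{a+lam}), (\ref{a-lam}). None of this is supplied or replaced by an alternative argument in your proposal, so as written it is an outline with the central computation left open.

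Two smaller points. First, your explanation of the $\prod_{k\ge 0}(\cdot)^{1/2^k}$ tower as ``unfolding an order-two stacky symmetry'' is not what happens: in the paper it arises because the M\"obius-inversion lemma for the central character sum restricts to \emph{odd} divisors $d$ (a consequence of the structure of the subgroups $L_a^{\pm}$ and the sign $\langle\theta,-1\rangle_d$), and the telescoping identity $\mu(d)+\mu(d/2)+\cdots+\mu(d/2^k)=0$ is then needed to rewrite the odd-divisor sum as the stated product of plethystic logarithms with Adams substitutions. Second, the hook polynomial enters through the degree ratio $|G_n|/\chi_\Lambda(1)$, not through the central value $\chi_\Lambda(\xi I_n)/\chi_\Lambda(I_n)$, which instead feeds the M\"obius-inversion step; keeping these separate is what produces the factor $(q-1)/2$ and the difference $(a^+_\sigma)^r-(a^-_\sigma)^r$ in the final formula.
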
 
Furthermore, the character variety decomposes into connected components indexed by certain invariants $w$ 
$$ \mathcal{M}_n^\tau =  \coprod_w  \mathcal{M}_{n,w}^\tau $$
(see Corollary \ref{compscon}) and we calculate the E-polynomials of these components. 

The coefficients $a_\lambda^{\pm}$ appearing above are calculated using Schur functions $s_{\lambda}$. We prove identities
\begin{align*} 
\sum_{\lambda \in \PP} \sa^+_{\lambda}s_\lambda & =  \left( \sum_{\lambda \in \PP} s_\lambda \right) \left(\sum_{n=0}^{\infty}s_{1^n}\right) \\
\sum_{\lambda \in \PP} \sa^-_{\lambda'}s_\lambda & =  \left( \sum_{\lambda \in \PP} s_\lambda \right) /\left(\sum_{n=0}^{\infty} s_{1^n}\right)
\end{align*} 
from which the formulas (\ref{a+lam}), (\ref{a-lam}) are deduced using the Pieri rule.

\subsection{Outline of the proof of Theorem \ref{BigThm}}

Our proof of Theorem \ref{BigThm}, like the proof of (\ref{HRVformula}), relies on counting points over finite fields. 
Hausel and Rodriguez-Villegas \cite{HRV}, construct a polynomial $p(q) \in \Z[q]$ such that $| \mathcal{R}_n(\F_q)| =p(q)$ for $char(q) \gg 1$. By Katz' Theorem \cite{HRV} (alternatively Ito \cite[Cor. 6.5]{I}), this determines the E-polynomial by the identity
$$  E(\mathcal{R}_n(\C)) = p(xy) .$$
Furthermore, since $GL_n(\C)$ acts with constant stabilizer $\C^{\times} I_n$ so that $PGL_n(\C) := GL_n(\C)/ \C^{\times} I_n$ acts freely, we have
$$ E( \mathcal{M}_n) =  p(xy)  / E( PGL_n(\C)).$$
In the present paper, we prove that $ p^{\tau}(q)  =  | \mathcal{R}_n^{\tau}(\F_q)|$ is a polynomial function of $q$ for $char(q) \gg 1$ and deduce similarly
$$ E( \mathcal{M}_n^{\tau}) = p^{\tau}(xy) / E( GL_n(\C))$$
with the difference that $GL_n(\C)$ acts on $\mathcal{R}_n^{\tau}(\C)$ with constant stabilizer group $\pm I_n$ rather than $\C^{\times} I_n$. 

To get their point count formula, Hausel and Rodriguez-Villegas use the presentation\footnote{In fact Hausel and Rodriguez-Villegas work with a single puncture, but the resulting formula for $\mathcal{R}_n(\F)$ is identical.}
$$ \pi_1( \Sigma'') = \langle a_1,b_1,..., a_g, b_g, c, d| \prod_{i=1}^g [a_i, b_i] = cd \rangle,$$
which determines an isomorphism
$$ \mathcal{R}_n(\F) \cong  \{ (A_1,B_1,...,A_g,B_g) \in GL_n(\F)^{2g} | \prod_{i=1}^g [A_i, B_i] = \xi^2 Id_n\}.$$
Define the class function $ C:  GL_n(\F_q) \rightarrow \Z_{\geq 0} \subseteq \C$ by
$$ C(A) :=  | \{ (X,Y) \in GL_n(\F_q)^2| [X,Y] = A\}| .$$  
Then $$ | \mathcal{R}_n(\F_q)| = ( C * ... * C) (\xi^2 Id_n)  = C^{ * g} (\xi^2 Id_n) $$
where $*$ is the convolution product 
$$ (\phi * \psi) (A) = \sum_{B \in GL_n(\F_q)} \phi(B) \psi(B^{-1}A).$$

In similar fashion, we use an explicit presentation for $\pi_1( \Sigma'',\tau)$ due to Huisman \cite{H}  to derive the identity
\begin{equation}\label{convueq}
 |\mathcal{R}_n^{\tau} (\F_q) |=  (F *...* F  * N *...* N) (\xi Id_n) = (F^{*r} * N^{*(g-r+1)} )(\xi Id_n)
 \end{equation}
where $r$ is the number of path components of $\Sigma^{\tau}  \cong \coprod_{r} S^1$. In this expression, $F,N: GL_n(\F_q) \rightarrow \Z_{\geq 0} \subseteq \C$ are class functions defined by
$$ F(A): =  | \{ B \in GL_n(\F_q) |  B = B^T,  A B A^T = B \}|$$ 
$$N(A) :=  |\{ B \in GL_n(\F_q) |  B (B^{-1})^T = A \}| .$$
The function $N$ was considered by Gow \cite{G} where he proved that $$ N * N = C,$$ so $N$ is a ``square root" of $C$. The function $F(A)$ counts the number of non-degenerate symmetric bilinear form on $\F_q^n$ for which $A$ is an isometry. 

Convolution products can be understood using harmonic analysis. Recall that the irreducible characters of a finite group $G$ form an orthonormal basis of the space of class functions, so given a class function $\phi: G \rightarrow \C$ we have, 
\begin{equation}\label{orthonormalbasisdec}
 \phi = \sum_{\chi \in Irr(G)} \langle \phi, \chi\rangle   \chi
 \end{equation}
where 
\begin{equation}\label{stinprodcha}
\langle \phi , \chi\rangle  = \frac{1}{|G|} \sum_{g \in G} \phi(g) \overline{ \chi(g)}.
\end{equation} 
The convolution product satisfies the identity
\begin{equation}\label{FouTrans1}
  \langle \phi * \psi, \chi\rangle   =  \frac{|G|}{\chi(1) }  \langle\phi, \chi\rangle \langle\psi, \chi\rangle,
  \end{equation}
so convolution products are easily understood once class functions are expressed in the irreducible character basis. Gow \cite{G} proved that
$$ N  =  \sum_{\chi \in \Irr (GL_n(\F_q))}  \chi$$
so every irreducible character occurs with multiplicity one.\footnote{The multiplicity in this case can be interpreted as a twisted Frobenius-Schur indicator (see \cite{KM}), which helps explain why it always equals one.} 
Define $\sa_{\chi}\in \Z_{\geq 0}$ by
\begin{equation}\label{achidec}
 F = \sum_{\chi \in \Irr(GL_n(\F_q))}  \sa_{\chi} \chi .
 \end{equation}
Applying to (\ref{convueq}) we get
\begin{eqnarray*}
 |\mathcal{R}_n^{\tau} (\F_q) | &=& |GL_n(\F_q)|^{g}   \sum_{\chi \in \Irr (GL_n(\F_q))}  \sa_{\chi}^r   \frac{ \chi(\xi I_n )}{\chi(I_n)^g}.
\end{eqnarray*} 
The bulk of the current paper is devoted to calculating the coefficients $\sa_{\chi}$. The calculation breaks into three steps:

\begin{enumerate}
\item In \S \ref{Explicitform}, we compute an explicit formula for $F$ using Milnor's classification of orthogonal transformations over perfect fields \cite{Mi}.

\item In \S \ref{FouTrans}, \S \ref{Determining multiplicities}, we use the formula $\sa_{\chi}= \langle F, \chi \rangle$ (see (\ref{orthonormalbasisdec})) to calculate $\sa_{\chi}$ in the limit as $q \rightarrow \infty$. 

\item In \S \ref{Characters of}, we show that the limit in step 2 actually makes sense, and that the formula remains valid for $char(q) \gg 1$.
\end{enumerate}

Once the multiplicities $\sa_\chi$ have been determined it is relatively straightforward (emulating \cite{HRV}) to produce an explicit polynomial expression for (\ref{convueq}), leading to the generating function in Theorem \ref{BigThm}.  This is carried out in \S \ref{E-polysec}.  Finally in \S \ref{ccm}, we calculate the E-polynomial of the connected components $\mathcal{M}_{n,w}^\tau$.

\subsection{Further discussion}

\subsubsection{Euler characteristic of the $PGL_n$-character variety}\label{Aref}

The character variety $\mathcal{M}_n$ admits an action by the group $\mathcal{A} = Hom( \pi_1(\Sigma),\C^{\times}) \cong (\C^{\times})^{2g}$ via scalar multiplication. The quotient space $ \tilde{\mathcal{M}}_n :=  \mathcal{M}_n/\!/ \mathcal{A}$ is identified with the $PGL_n(\C)$-character variety. If $g\leq 1$ then $\tilde{\mathcal{M}}_n$ is either a point or the empty set. If $g \geq 2$, Hausel and Rodriguez-Villegas proved that the Euler characteristic of $\tilde{\mathcal{M}}_n$ is equal to $\mu(n) n^{2g-3}$ where $\mu(n)$ is the Moebius function. 

The involution $\tau$ on $\Sigma$ induces an automorphism of $\mathcal{A}$ and the invariant subgroup $ \mathcal{A}^\tau \cong (\C^{\times})^g \times \{\pm 1\}^{r-1}$ acts on $\mathcal{M}_n^\tau$. Define  
$$ \tilde{\mathcal{M}}_n^\tau :=  \mathcal{M}_n^\tau/\!/ \mathcal{A}^\tau.$$
If $n$ is odd then $\mathcal{A}$ transitively permutes the set of  connected components of $\mathcal{M}_n^\tau$. Consequently if $\mathcal{M}_{n,w}^\tau$ is a particular component, we have 
$$ \tilde{\mathcal{M}}_n^\tau  \cong  \mathcal{M}_{n,w}^\tau/\!/ \mathcal{A}^\tau_0$$
where $\mathcal{A}_0 \cong (\C^{\times})^g$ be the identity component of $\mathcal{A}$.
On the other hand, if $n$ is even, then $\mathcal{A}$ does not permute components and we have
$$  \tilde{\mathcal{M}}_n^\tau  \cong  \coprod_w  \mathcal{M}_{n,w}^\tau/\!/ \mathcal{A}^\tau .$$

When $g \geq 2$  and $n$ is odd, we prove (Corollary \ref{euler char}) that the Euler characteristic of $\tilde{\mathcal{M}}_n^\tau$ is equals to $\mu(n) n^{g-2}$. When $g \geq 2$ and $n$ is even, we prove that the Euler characteristic of $ \mathcal{M}_{n,w}^\tau/\!/ \mathcal{A}^\tau_0$ is zero for all $w$, but we do not calculate the Euler characteristic of $\tilde{\mathcal{M}}_n^\tau$.

\subsubsection{Mixed Hodge polynomials}

The E-polynomial is a specialization of the (compactly supported) mixed Hodge polynomial.  Namely, if $Z$ is a complex variety,  $E(Z;x,y) := MH( Z; x,y,-1)$ where 
$$MH(Z;x,y,t):=  \sum  h_{i,j,k} x^i y^j t^k$$
and $h_{i,j,k}$ are the dimensions of associated graded components of the mixed Hodge filtration on compactly supported cohomology $H_c^k( Z;\C)$.  Hausel and Rodriguez-Villegas conjectured a generating function identity for mixed Hodge polynomial which reduces to (\ref{HRVformula}) upon setting $t=-1$:
\begin{equation}\label{HRVformulaMH}
\frac{1}{(xy-1)(t^2xy-1)}\sum_{n=1}^{\infty}  \frac{ MH(\mathcal{M}_n;x,y,t)}{(t^2xy)^{n^2(g-1)}} T^n =   \mathrm{Log}\left( \sum_{\lambda \in \PP} \mathcal{H}_{\lambda}^{2g-2}(xy,t) T^{|\lambda|}\right).
\end{equation}
for certain two variable rational functions  $\mathcal{H}_{\lambda}^{2g-2}(xy,t)$.  They verified that (\ref{HRVformulaMH}) gives the correct formula for $n=1$ and $n=2$.  The specialization $x=y=1$ was later proven by Mellit \cite{Me}.

It is natural to hope for an analogous conjectural identity for the mixed Hodge polynomial of $\mathcal{M}_n^\tau$. Our efforts in this direction have been hampered by a lack of understanding of the rational cohomology ring of $\mathcal{M}_n^\tau$ when $n>1$; so far only the $\Z_2$-Betti numbers of $\mathcal{M}_2^\tau$ have been calculated \cite{B}.  This is a promising direction for future research.

\subsubsection{Curious Poincar\'e duality}
The E-polynomial of $\mathcal{M}_n$ satisfies the so-called curious Poincar\'e duality property
\begin{equation}\label{cpd}
  E(\mathcal{M}_n; q) =  q^{\dim(\mathcal{M}_n)} E(\mathcal{M}_n;q^{-1}).
\end{equation}
This is curious because $\mathcal{M}_n$ is non-compact, so topological Poincar\'e duality does not apply. Curious Poincar\'e duality is a consequence of the P=W conjecture of de Cataldo, Hausel, and Migliorini \cite{CHM}, which holds that under the non-Abelian Hodge correspondence homeomorphism between $\mathcal{M}_n$  and $\mathcal{M}_{Dol}$, the weight filtration on the cohomology ring $H^*(\mathcal{M}_n;\Q)$ is sent to the perverse Leray filtration on $H^*(\mathcal{M}_{Dol}; \Q)$ associated to the Hitchin map  $\mathcal{M}_{Dol} \rightarrow \mathcal{B}$. The $P=W$ conjecture was proven recently by Maulik and Shen \cite{MS}, and independently by Hausel, Mellit, Minets, and Schiffmann \cite{HMMS}.

In the current paper, we find that the E-polynomial of $\mathcal{M}_n^{\tau}$ generally  \emph{does not} satisfy curious Poincar\'e duality. This is in keeping with the P=W conjecture, because under the non-Abelian Hodge correspondence $\mathcal{M}_n^{\tau}$ is sent to a real submanifold $\mathcal{M}_{Dol}^{\tau} \subseteq \mathcal{M}_{Dol}$ and we should not expect the real integrable system $\mathcal{M}_{Dol}^{\tau} \rightarrow \mathcal{B}^{\tau}$ to have a well behaved perverse Leray spectral sequence \cite{CM}. As such, our results provided some circumstantial evidence in support of the P=W conjecture (we thank Vivek Shende for emphasizing this point to us).

\hide{
\subsubsection{The $q=1$ case}
Morally, the symmetric group $S_n$ can be thought of as the general linear group over the ``field of one element", $GL_n(\F_1)$. It seems appropriate then that the general formula for the coefficients $\sa_\chi$ in (\ref{achidec}) reduces to a pair of $S_n$ characters
\begin{align*}
 \chi_+ & := \sum_{\lambda \vdash n} \sa_{\lambda'}^+ \chi_{\lambda} &  \chi_- &:=   \sum_{\lambda \vdash n} \sa_{\lambda'}^- \chi_{\lambda} 
 \end{align*}
where $\chi_\lambda$ is the irreducible character of $S_n$ associated to the partition $\lambda$.  Since $F$ is the permutation character for the natural action of $GL_n(\F_q)$ on the set of non-degenerate symmetric bilinear forms, one might expect that $\chi_+$ and $\chi_-$ are permutation characters for the action of $S_n$ on some set analogous to the set of non-degenerate symmetric bilinear forms over $\F_1$.  

It is not hard to show that $\chi_-$ is the character for the permutation representation of $S_{n}$ on the set of perfect matching graphs on $n$ vertices (so $\chi_- = 0$ if $n$ is odd). We have verified that $\chi_+$ is a permutation character for $n \leq 7$, but we have not yet found a nice geometric interpretation of $\chi_+$. This remains an intriguing open problem. }

\textbf{Acknowledgements:} This research collaboration began during consecutive conferences in ICTS Bangalore and TIFR Mumbai and we thank Indranil Biswas for inviting us both. The first author was supported by an NSERC Discovery Grant RGPIN-2016-05382 and the second by the Collaborative Research Center SFB/TR 45 `Periods, moduli spaces and arithmetic of algebraic varieties' (Project M08-10) of the Deutsche Forschungsgemeinschaft. Thanks also to Michael Groechenig, Tam\'as Hausel, Anton Mellit, and the referee for helpful comments on an earlier draft or presentation.

\textbf{Declarations}  On behalf of all authors, the corresponding author states that there is no conflict of interest. Data sharing not applicable to this article as no datasets were generated or analysed during the current study.

\section{The character variety}\label{the character variety}

We begin with a review of Hitchin's equations and the non-abelian Hodge correspondence. Let $P$ be a $U(n)$-principle bundle of degree $d$ over a compact Riemann surface $\Sigma$ and where $d, n$ are coprime, and let $P^c$ be the complexified $GL_n(\C)$-bundle. Let $A$ be a connection on $P$ and let $\Phi \in \Omega^{(1,0)}(\Sigma, ad P^c)$.  The (inhomogeneous) Hitchin equations are
\begin{eqnarray*}
F_A  + [ \Phi, \Phi^*] &=& \omega \\
d^{''}_A \Phi & =& 0
\end{eqnarray*}
where $\omega \in \Omega^2( \Sigma, z) $ is a fixed 2-form with values in the centre $z \subseteq ad P^c$. The moduli space of solutions is a manifold we denote $\mathcal{M}_{Hit}(\omega)$. Given any two $\omega, \omega' \in \Omega^2( \Sigma, z) $ we can produce an isomorphism $\mathcal{M}_{Hit}(\omega) \cong \mathcal{M}_{Hit}(\omega')$ by tensoring with an appropriate $U(1)$-bundle connection, so we will abuse notation and denote $\mathcal{M}_{Hit}:= \mathcal{M}_{Hit}(\omega)$.

The forgetful map $(A, \Phi) \mapsto (d_A^{''}, \Phi)$ induces a morphism from $\mathcal{M}_{Hit}$ to the moduli space of stable Higgs pairs $\mathcal{M}_{Dol}$. The forgetful map $(A,\Phi) \mapsto A+ \Phi + \Phi^*$ determines a morphism from $\mathcal{M}_{Hit}$ to the moduli space $\mathcal{M}_{DR}$ of $GL_n(\C)$-connections with curvature $\omega$. Note in particular that $A+ \Phi + \Phi^*$ is projectively flat. The non-Abelian Hodge correspondence says that the forgetful maps defined above determine diffeomorphisms
\begin{equation}\label{nahc}
 \mathcal{M}_{Dol} \leftarrow  \mathcal{M}_{Hit}  \rightarrow \mathcal{M}_{DR}.
\end{equation}

Now consider an anti-holomorphic involution $\tau: \Sigma \rightarrow \Sigma$ and suppose  $\omega =\tau^*\overline{\omega} $. This determines an involution on $\mathcal{M}_{Hit}$ sending the pair $(A, \Phi)$ on $P$ to the pair $\tau(A,\Phi)  =  ( \tau^*\overline{A}, -\tau^* \overline{\Phi})$ on the conjugate pull-back bundle $\tau^* \overline{P}$. The involution descends to a holomorphic involution of $\mathcal{M}_{DR}$ and an anti-holomorphic involution of $\mathcal{M}_{Dol}$. By Propostion \ref{nahc} we obtain diffeomorphisms of fixed point sets
$$ (\mathcal{M}_{Dol})^{\tau} \cong  (\mathcal{M}_{Hit})^{\tau}  \cong (\mathcal{M}_{DR})^{\tau}.$$

Choose $\omega$ equal to zero except for a pair of delta function singularities at a pair of  points $p, p'$ and let $\Sigma'' := \Sigma \setminus \{p,p'\}$. Then the Riemann-Hilbert correspondence determines a diffeomorphism to the character variety (\ref{charvarfirst}) 
$$ \mathcal{M}_{DR}  \cong \mathcal{M}_n := \mathcal{R}_n(\C)  /\!/  GL_n(\C). $$

Assume henceforth that $\Sigma^{\tau} \neq \emptyset$ and choose one of these fixed points as the base point for $\pi_1(\Sigma'')$ so that $\tau$ induces an automorphism $\tau_*$ of $\pi_1(\Sigma'')$.  The following is a minor alteration of (\cite{BS}, Prop. 15).

\begin{prop}
If $ \Sigma^\tau \neq \emptyset$ then the forgetful map $\mathcal{M}_n^{\tau} \rightarrow \mathcal{M}_n$ restricts to a bijection
$$ \mathcal{M}_n^{\tau} \cong ( \mathcal{M}_n)^{\tau}. $$
\end{prop}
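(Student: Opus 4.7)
The plan is to unpack both sides as quotients of representation spaces and match them up using the splitting of the short exact sequence $1 \to \pi_1(\Sigma'') \to \widetilde{\pi_1(\Sigma'')} \to \Z/2 \to 0$, which is available precisely because $\Sigma^\tau \neq \emptyset$. Throughout I will write $\sigma$ for the Cartan involution $A \mapsto (A^T)^{-1}$, and I will fix a basepoint in $\Sigma^\tau$ so that $\tau$ lifts to an order-two element $\tilde\tau \in \widetilde{\pi_1(\Sigma'')}$ projecting onto the generator of $\Z/2$.

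\textbf{Well-definedness.} First I would check that the forgetful map lands in $(\mathcal{M}_n)^\tau$. Given an extension $\tilde\phi$ of $\phi$, write $\tilde\phi(\tilde\tau) = (g,1) \in GL_n(\C) \rtimes \Z/2$. Expanding the homomorphism identity $\tilde\phi(\tilde\tau \gamma \tilde\tau^{-1}) = \tilde\phi(\tilde\tau)\tilde\phi(\gamma)\tilde\phi(\tilde\tau)^{-1}$ in the semidirect product yields the intertwining relation $\phi(\tau_*\gamma) = g\,\sigma(\phi(\gamma))\,g^{-1}$, which exhibits $[\phi]$ as a $\tau$-fixed point on $\mathcal{M}_n$. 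Moreover $\tilde\phi(\tilde\tau)^2 = \mathrm{Id}$ unpacks to $g\sigma(g) = I$, i.e.\ $g = g^T$.

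\textbf{Injectivity and surjectivity.} Using the splitting, an extension $\tilde\phi$ is entirely determined by $\phi$ together with a symmetric matrix $g$ satisfying the intertwining relation above. For surjectivity, pick a polystable representative $\phi$ of a class $[\phi] \in (\mathcal{M}_n)^\tau$ (so its $GL_n(\C)$-orbit is closed). The fixed-point hypothesis provides some (not necessarily symmetric) intertwiner $h \in GL_n(\C)$. Applying the intertwining identity twice and using $\tau_*^2 = 1$ forces $h\sigma(h)$ to lie in the centralizer $Z_\phi$. The decisive step is to correct $h \mapsto ch$ with $c \in Z_\phi$ so that $(ch)\sigma(ch) = I$, which then produces the desired symmetric intertwiner $g := ch$ and, hence, the extension $\tilde\phi$. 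For injectivity, two extensions $\tilde\phi_1, \tilde\phi_2$ of the same $\phi$ correspond to two symmetric intertwiners $g_1, g_2$ whose ratio lies in $Z_\phi$; one checks that this ambiguity is exactly the residual $GL_n(\C)$-conjugation action on extensions coming from the centralizer of $\phi$, so $[\tilde\phi_1] = [\tilde\phi_2]$ in $\mathcal{M}_n^\tau$. The case of representatives that are only $GL_n(\C)$-conjugate is reduced to this by first conjugating.

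\textbf{The main obstacle.} The crux is the correction step in the surjectivity argument: it is a non-abelian cohomological question, namely that the class of $h\sigma(h)$ in $H^1(\Z/2; Z_\phi)$ (with $\Z/2$ acting via $\sigma$ composed with conjugation by $h$) vanishes. Polystability makes $Z_\phi$ a reductive complex algebraic group, and one handles the obstruction by decomposing $\phi$ into isotypic components and invoking a polar-decomposition/Weyl unitary trick argument on each factor, where the analogue of the statement ``every symmetric positive form admits a square root'' trivializes the relevant cocycle. This is essentially the content of \cite[Prop.~15]{BS}, of which the present proposition is a mild variant accounting for the punctures $p, p'$; the fact that $\tau$ interchanges $p$ and $p'$ with compatible monodromy $\xi I_n$ ensures that the boundary conditions are preserved by the involution, so no further obstruction is introduced at the punctures.
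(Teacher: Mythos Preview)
Your setup matches the paper's: both use the splitting coming from a basepoint in $\Sigma^\tau$ to identify $\mathcal{R}_n^\tau(\C)$ with pairs $(\phi,g)$ where $g=g^T$ intertwines $\phi\circ\tau_*$ with $\sigma\circ\phi$, and both recognize that the content lies in showing that the intertwiner $h$ guaranteed by $[\phi]\in(\mathcal{M}_n)^\tau$ can be taken symmetric.

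The gap is in your resolution of that obstacle. Your proposed correction $h\mapsto ch$ with $c\in Z_\phi$ does nothing: in the present setting every $\phi\in\mathcal{R}_n(\C)$ is irreducible (this is \cite[Lemma~2.2.6]{HRV}, a consequence of the monodromy $\xi I_n$ with $\xi$ a primitive $2n$th root), so $Z_\phi=\C^\times$ consists of scalars, and for a scalar $c$ one has $(ch)\sigma(ch)=c\,h\,c^{-1}\sigma(h)=h\sigma(h)$. The obstruction $z:=h\sigma(h)$ is therefore an invariant of the class, and a short computation (or equivalently the paper's $B:=A^TA^{-1}$) shows $z=\pm 1$. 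This is the genuine real/quaternionic dichotomy, and no polar decomposition or square-root trick removes it: the case $z=-1$ is perfectly consistent algebraically. Your remark about ``decomposing into isotypic components'' is moot for the same reason---there is only one component.

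What the paper does instead is accept this dichotomy and rule out the quaternionic case $A=-A^T$ by a geometric argument that actually uses the hypothesis $\Sigma^\tau\neq\emptyset$: under the non-Abelian Hodge correspondence, real and quaternionic representations go to real and quaternionic vector bundles in the sense of Atiyah, and by \cite[Prop.~4.2]{BHH} quaternionic bundles of odd degree (the relevant case here) do not exist over a real curve with real points. So your citation of \cite[Prop.~15]{BS} is apt, but the mechanism is not an algebraic cocycle-trivialization; it is a topological obstruction result for bundles over real curves.
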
  

\begin{proof}
The isomorphism $\widetilde{\pi_1(\Sigma'')} = \pi_1(\Sigma^{''}) \rtimes \Z_2$ determines an isomorphism
\begin{eqnarray}
 \mathcal{R}_n^\tau(\C) &  \cong  &  \{ (\rho, A) \in \mathcal{R}_n(\C)  \times GL_n(\C) | A \theta( \rho)  \theta (A)  =  \rho \circ \tau_*, A \theta(A) = I_n\} \\
 & =&  \{ (\rho, A) \in \mathcal{R}_n(\C)  \times GL_n(\C) | A^{-1} \rho A =  \theta \circ \rho \circ \tau_*, A = A^T\} 
 \end{eqnarray}
where $\tau_*$ is the automorphism of $\pi_1(\Sigma'')$ induced by $\tau$ and $\theta \in Aut(GL_n(\C))$ is the Cartan involution $ \theta(X) =  (X^{-1})^T$. 

On the other hand, the involution $\tau$ on $\mathcal{M}_n$ lifts to the involution $\iota$ of $\mathcal{R}_n(\C)$ 
$$\iota(\rho) :=  \theta \circ \rho \circ \tau_*.$$ A homomorphism $ \rho \in  \mathcal{R}_n(\C)$ represents $[\rho] \in (\mathcal{M}_n)^{\tau}$ if and only if there exists $A \in GL_n(\C)$ such that 
\begin{equation}\label{Aeqn}
A^{-1} \rho A = \theta \circ \rho \circ \tau_* ,
\end{equation}
which implies, if $B :=A^TA^{-1}$, that
\begin{equation}\label{ttoinv}
B  \rho B^{-1} =  \rho.
\end{equation}
Since  $\rho$ is irreducible, $B$ is a scalar matrix, hence $B = \pm I_n$ and $A = \pm A^T$. We call $\rho$ \emph{real} if $A = A^T$ and \emph{quaternionic} if $A = -A^T$.  It remains to show there are no quaternionic representations.

Under the non-Abelian Hodge correspondence, the real representations are sent to real vector bundles and the quaternionic representations are sent to quaternionic vector bundles in the sense of Atiyah \cite{A}.  Therefore by (\cite{BHH} Prop. 4.2), the quaternionic representations do not exist if $\Sigma^{\tau} \neq \emptyset$. 

\end{proof}

A presentation of $\widetilde{\pi_1(\Sigma'')} $ was produced by Huisman \cite{H}. \footnote{In fact Huisman considered the case with no punctures, but the formula for $\widetilde{\pi_1(\Sigma'')} $ is an immediate corollary.}

\begin{prop}
Let $\Sigma''$ be a twice punctured genus $g$ Riemann surface with orientation reversing involution $\tau$ transposing the punctures. Let $r$ be the number of fixed point components of $\Sigma^{\tau}$, and let $r + s = g+1$. 
Then 
$$\widetilde{\pi_1(\Sigma'')} \cong \langle \{a_i, b_i\}_{i=1}^r, \{ x_j\}_{j=1}^s, d  | b_i^2=1, a_ib_i = b_ia_i, \Phi(a,x) =d \rangle $$
where
$$\Phi = \begin{cases} \prod_{i=1}^r a_i\prod_{j=1}^{s/2} [x_{2j-1},x_{2j}] & \text{ if  $\Sigma/\tau$ is orientable ($s$ is even in this case)}\\
 \prod_{i=1}^r a_i\prod_{j=1}^{s} x_{j}^2 & \text{ if not} .\end{cases} $$

The generators  $a_i$ and $d$ lie in the subgroup $\pi_1(\Sigma)$,  the $b_i$ do not, and the $x_j$ do if and only if $\Sigma/\tau$ is orientable. The generator $d$ corresponds to a loop around one puncture point.  
\end{prop}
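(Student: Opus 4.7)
The plan is to identify $\widetilde{\pi_1(\Sigma'')}$ with the orbifold fundamental group of $\Sigma''/\tau$, equipped with the natural orbifold structure in which the image of $\Sigma^\tau$ is a mirror boundary. Since $\Sigma^\tau$ is nonempty, the Borel construction $E(\Z/2)\times_\tau\Sigma''$ is homotopy equivalent to this orbifold, giving $\widetilde{\pi_1(\Sigma'')} \cong \pi_1^{\mathrm{orb}}(\Sigma''/\tau)$. Because $\tau$ acts freely on the orbit $\{p,p'\}$, the two punctures descend to a single ordinary (non-orbifold) puncture in the quotient. First I would invoke the classification of Klein surfaces to describe the topology of $\Sigma/\tau$: it is a compact $2$-manifold with $r$ boundary circles, either orientable of genus $s/2$ (with $s$ even) or non-orientable with $s$ crosscaps. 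The identity $\chi(\Sigma) = 2\chi(\Sigma/\tau) - \chi(\Sigma^\tau)$, together with $\chi(\Sigma^\tau) = 0$ and $\chi(\Sigma/\tau) = 2-r-s$, then yields $r+s=g+1$ in both cases.

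The heart of the argument is Huisman's theorem \cite{H} in the closed case (no punctures). I would sketch its proof via a choice of fundamental polygon for the Klein surface $\Sigma/\tau$: each mirror boundary circle contributes a pair of generators, a boundary-parallel loop $a_i$ and a reflection $b_i$ across the circle, satisfying $b_i^2=1$ and $a_ib_i=b_ia_i$ (the latter because in a normal neighbourhood of the fixed circle, $S^1\times(-\epsilon,\epsilon)$, the reflection acts as $(x,t)\mapsto(x,-t)$, fixing the $S^1$-direction and hence preserving the homotopy class of $a_i$). The bulk genus contributes either handle commutators $[x_{2j-1},x_{2j}]$ in the orientable case or crosscap squares $x_j^2$ in the non-orientable case, and reading off the boundary of the polygon gives the single relation $\Phi(a,x)=1$. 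To pass from $\Sigma$ to $\Sigma''$ I would observe that this amounts to removing one interior point from $\Sigma/\tau$; this introduces one new generator $d$ (a small loop around the puncture) and replaces the relation $\Phi=1$ by $\Phi=d$, yielding the stated presentation.

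It remains to identify which generators lie in the index-two subgroup $\pi_1(\Sigma'')\subset\widetilde{\pi_1(\Sigma'')}$, namely the kernel of the projection to $\Z/2$. A generator lies in $\pi_1(\Sigma'')$ if and only if the corresponding loop in $\Sigma''/\tau$ lifts to a loop (not merely a path) in $\Sigma''$. The loops $a_i$ and $d$ encircle the mirror circles and the free-orbit puncture respectively, and so lift to loops by continuity of the cover; the reflection $b_i$ is by construction the nontrivial element of $\Z/2$ in its local chart and hence projects nontrivially; and $x_j$ lifts to a loop precisely when the associated handle or crosscap in $\Sigma/\tau$ is orientable, which occurs exactly when $\Sigma/\tau$ is orientable globally. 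The principal technical burden is the derivation of Huisman's boundary relation $\Phi$, which requires careful bookkeeping for the fundamental polygon of a Klein surface; for this I would simply cite \cite{H}, and the only additional work is the straightforward modification to incorporate the single puncture.
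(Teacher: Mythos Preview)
Your proposal is correct and matches the paper's approach: the paper simply attributes the result to Huisman \cite{H} with a footnote noting that Huisman treated the unpunctured case and that the punctured version is an immediate corollary, which is exactly the argument you have spelled out in detail. Your explicit verification of $r+s=g+1$ via Euler characteristics, your description of how the single free-orbit puncture replaces the relation $\Phi=1$ by $\Phi=d$, and your identification of which generators lie in the index-two subgroup all fill in steps that the paper leaves implicit.
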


It follows that $\mathcal{R}_n^\tau(\F)$ can be identified with subvariety of  $GL_n(\F)^{2r+s}$ of tuples  $(A_i, B_i,X_j)$ defined by equations
\begin{equation}\label{EqforF}
B_i = B_i^T,  \text{ and }  A_i B_i A_i^T =B_i, \text{ for all $i \in \{1,...,r\}$}
\end{equation}
and
$$\Phi(A,X) = \xi Id_n $$
where
\begin{equation}\label{Eqforprod}
\Phi := \begin{cases} \prod_{i=1}^r A_i\prod_{k=1}^{s/2} [X_{2k-1},X_{2k}] & \text{if   $\Sigma/\tau$ is orientable}.\\
\prod_{i=1}^r A_i \prod_{j=1}^s X_j(X_j^T)^{-1} & \text{if not}  \end{cases}
\end{equation}

Note that (\ref{EqforF}) simply requires $B_i$ to represent a non-degenerate, symmetric bilinear form with respect to which $A_i$ is orthogonal. This implies in particular that $\det(A_i) \in \{ \pm 1\}$ for all $i=1,...,r$, immediately yielding the following.

\begin{cor}\label{compscon}
There are coproduct decompositions
\begin{align*}
\mathcal{R}_n^\tau(\F) & = \coprod_{w}  \mathcal{R}_n^\tau(\F)_w & & \text{and} & \mathcal{M}_n^\tau & = \coprod_{w}  \mathcal{M}_{n,w}^\tau
\end{align*}
indexed by r-tuples $w \in \{\pm 1\}^r$ satisfying the condition $ \prod_{i=1}^r w(i) = \xi^n = -1$.
\end{cor}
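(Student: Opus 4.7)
The plan is to construct an algebraic map $w \colon \mathcal{R}_n^\tau(\F) \to \{\pm 1\}^r$ by $w(A_i,B_i,X_j) := (\det A_1, \ldots, \det A_r)$, to verify that its image lies in the stated subset of $\{\pm 1\}^r$, and to check that $w$ is constant on $GL_n$-orbits; the coproduct decompositions then follow by taking preimages, since $\{\pm 1\}^r$ is a finite discrete set. That $w$ is well-defined is already observed in the paragraph preceding the corollary: the relations (\ref{EqforF}) say $B_i=B_i^T$ and $A_iB_iA_i^T=B_i$, so $A_i$ is an isometry of the non-degenerate symmetric form $B_i$; taking determinants yields $(\det A_i)^2=1$.

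Next I would verify the constraint $\prod_i w(i) = \xi^n = -1$ by taking determinants of the relation $\Phi(A,X) = \xi \Id_n$. In both cases of (\ref{Eqforprod}), the factors depending on the $X_k$ each have determinant one: commutators $[X_{2k-1}, X_{2k}]$ in the orientable case, and $X_j(X_j^T)^{-1}$ in the non-orientable case. Therefore $\det \Phi(A,X) = \prod_i \det A_i = \prod_i w(i)$, while $\det(\xi\Id_n) = \xi^n$, which equals $-1$ since $\xi$ is a primitive $2n$-th root of unity.

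The main (and essentially only nontrivial) point is $GL_n$-invariance of $w$, ensuring it descends to the GIT quotient $\mathcal{M}_n^\tau$. Here it is crucial that $a_i \in \pi_1(\Sigma'')$ lies in the normal subgroup of $\widetilde{\pi_1(\Sigma'')}$. Consequently $A_i = \tilde\phi(a_i)$ transforms under $g \in GL_n$ by ordinary conjugation $A_i \mapsto g^{-1} A_i g$, which preserves the determinant; the more complicated transformation $B_i \mapsto g^{-1} B_i \theta(g)$ for $b_i \notin \pi_1(\Sigma'')$ (arising from the semidirect product structure of $\widetilde{GL}_n(\F)$) plays no role in computing $w$. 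One might a priori worry that the Cartan involution would enter into the transformation of $A_i$, but it does not. Since $\{\pm 1\}^r$ is finite and discrete, each fibre of $w$ is a clopen subvariety of $\mathcal{R}_n^\tau(\F)$, yielding the first coproduct decomposition; the $GL_n(\C)$-invariance of $w$ then induces $\mathcal{M}_n^\tau = \coprod_w \mathcal{M}_{n,w}^\tau$ after passing to the GIT quotient, using the fact that GIT quotients commute with finite disjoint unions of invariant clopen pieces.
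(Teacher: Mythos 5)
Your proposal is correct and follows exactly the route the paper intends: the corollary is stated as an immediate consequence of the preceding remark that each $A_i$ is an isometry of the nondegenerate symmetric form $B_i$, hence $\det(A_i)\in\{\pm 1\}$, and your verification of the constraint $\prod_i \det A_i = \xi^n = -1$ by taking determinants of $\Phi(A,X)=\xi \Id_n$, together with the observation that $a_i\in\pi_1(\Sigma'')$ so $A_i$ transforms by ordinary conjugation, fills in precisely the details the paper leaves implicit. No gaps.
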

The $\mathcal{M}_{n,w}^\tau$ are in fact the connected components of $\mathcal{M}_{n}^\tau$. This can be proven of the homeomorphic space $(\mathcal{M}_{Dol})^\tau$ using Morse theory (see for example \cite{B}). This is also clear from the E-polynomial formulas we calculate in \S \ref{ccm}.

The group $\mathcal{A}:= Hom(\pi_1(\Sigma), \C^{\times}) \cong (\C^{\times})^{2g}$ acts on the character variety $\mathcal{M}_n$ via the scalar multiplication action of $\C^{\times}$ on $GL_n(\C)$.  The quotient 
$$ \tilde{\mathcal{M}}_n :=  \mathcal{M}_n /\!/  \mathcal{A}$$
is called the $PGL_n(\C)$-character variety. 

The subgroup $ \mathcal{A}^\tau:= \{ \phi \in \mathcal{A} |  \theta \circ \phi \circ \tau_* = \phi\}$ restricts to an action on $\mathcal{M}_{n}^\tau$. Denote the quotient
$$ \tilde{\mathcal{M}}_n^\tau :=  \mathcal{M}_n^\tau /\!/  \mathcal{A}^\tau.$$
We have isomorphisms
$$ \mathcal{A}^\tau \cong  (\C^\times)^g \times \{ \pm 1\}^{r-1}$$
In terms of the presentation above, the action is defined
$$ (\lambda_1,..., \lambda_{g+1}, \epsilon_1,...,\epsilon_r) \cdot  (A_i, B_i, X_j) = (\epsilon_i A_i, \lambda_iB_i, \lambda_{j+r} X_j) $$
where $\lambda_i \in \C^{\times}$, $\epsilon_i \in \{\pm 1\}$ and we impose $\lambda_{g+1}=1 = \prod_{i=1}^r \epsilon_i $. 
Under this action $\mathcal{M}_1^\tau$ is an $\mathcal{A}^\tau$-torsor, so

\begin{equation}\label{n=1eg}
\mathcal{M}_1^{\tau} \cong  \coprod_{2^{r-1}}  (\C^\times)^g.
\end{equation}

\begin{rmk}\label{simpacm}
When $n$ is odd, $\mathcal{A}^\tau$ transitively permutes the connected components $\mathcal{M}_{n}^\tau$, so the $\mathcal{M}_{n,w}^\tau$ are pair-wise isomorphic.  When $n$ is even $\mathcal{A}^\tau$ does not permute components.
\end{rmk}

The surjective homomorphism $\widetilde{GL}_n(\C) \rightarrow \widetilde{GL}_1(\C)$  defined by sending $(A, \epsilon) \rightarrow (\det(A), \epsilon)$ determines a fibre bundle
$$det: \mathcal{M}_{n,w}^\tau \rightarrow \mathcal{M}_{1,w}^\tau  \cong (\C^{\times})^g.$$

Given $ \phi \in \mathcal{M}_{1,w}^\tau$ denote the fibre $\mathcal{M}_{n,\phi}^\tau :=  \det^{-1}(\phi)$. If $\mathcal{A}^\tau_0 \cong (\C^{\times})^g$ is the identity component of $\mathcal{A}$ then we have an isomorphism
$$ \mathcal{M}_{1,w}^\tau \cong \mathcal{M}_{1,\phi}^\tau \times_{\mu_n} \mathcal{A}^\tau_0 $$ 
where $\mu_n \cong (\Z/n)^g$ is the $n$-torsion subgroup of $\mathcal{A}^\tau_0$. Therefore
$$ H^*(\mathcal{M}_{1,w}^\tau) \cong  H^*(\mathcal{M}_{1,\phi}^\tau)^{\mu_n} \otimes H^*((\C^{\times})^g),$$
where $H^*(\mathcal{M}_{1,\phi}^\tau)^{\mu_n}$ is the ring of $\mu_n$-invariants. In particular, the E-polynomial of $\mathcal{M}_{1,w}^\tau$ is divisible by $E((\C^{\times})^g) = (q-1)^g$.

\section{Point counting and the E-polynomial}

\begin{prop}\label{quotienteplo}
The conjugation action of $GL_n(\C)$ on $\mathcal{R}_n^\tau(\C)$ is free modulo the centre $\{\pm I_n\} \leq GL_n(\C)$. Consequently, the E-polynomials satisfy the identity
\begin{align*} 
 E( \mathcal{M}_{n}^\tau)   &= \frac{E( \mathcal{R}_n^\tau(\C) )}{ E( GL_n(\C))}.
 \end{align*}
Similarly
\begin{align*} 
 E( \mathcal{M}_{n,w}^\tau)   &= \frac{E( \mathcal{R}_n^\tau(\C)_w )}{ E( GL_n(\C))}.
 \end{align*}
\end{prop}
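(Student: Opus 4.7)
The strategy is to identify the stabiliser of the $GL_n(\C)$-action explicitly, verify that it is the constant subgroup $\{\pm I_n\}$, and then invoke multiplicativity of the $E$-polynomial for the resulting principal bundle $\mathcal{R}_n^\tau(\C) \to \mathcal{M}_n^\tau$.

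Using the identification from the proposition of Section \ref{the character variety}, a point of $\mathcal{R}_n^\tau(\C)$ is a pair $(\rho, A) \in \mathcal{R}_n(\C) \times GL_n(\C)$ with $A = A^T$ and $A^{-1}\rho A = \theta \circ \rho \circ \tau_*$. A short computation using $\theta(g^{-1}) = g^T$ shows that the conjugation action of $g \in GL_n(\C) \leq \widetilde{GL}_n(\C)$ translates to
\begin{equation*}
g \cdot (\rho, A) = \big(g \rho g^{-1},\, g A g^T\big).
\end{equation*}
If $g$ stabilises $(\rho, A)$, then $g$ commutes with every $\rho(\gamma)$; once $\rho$ is known to be irreducible, Schur's lemma forces $g = \lambda I_n$, and then the relation $g A g^T = A$ becomes $\lambda^2 A = A$, so $\lambda = \pm 1$ and the stabiliser is exactly $\{\pm I_n\}$.

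Establishing irreducibility of $\rho$ is the step I expect to require the most care. I would adapt the determinant obstruction of \cite{HRV}: a $\rho$-invariant subspace $V$ of dimension $0 < k < n$ would give $\Phi|_V = \xi I_k$, and hence $\det(\Phi|_V) = \xi^k$. The commutator factors $[X_{2k-1}, X_{2k}]$ and the factors $X_j (X_j^T)^{-1}$ always have determinant $1$ upon restriction, while each orthogonal factor $A_i$ satisfies $\det(A_i|_V) = \pm 1$ whenever $B_i|_V$ is non-degenerate. This would force $\xi^k = \pm 1$, which is impossible since $\xi$ is a primitive $2n$th root of unity and $0 < k < n$. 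The delicate point is handling possibly degenerate $B_i|_V$, which can likely be resolved either by restricting to the open dense stable locus (as is natural for a GIT quotient) or by a radical-space argument that iteratively reduces to the non-degenerate case.

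With the constant stabiliser $\{\pm I_n\}$ in hand, $\mathcal{R}_n^\tau(\C) \to \mathcal{M}_n^\tau$ is a principal bundle for the connected group $GL_n(\C)/\{\pm I_n\}$, and multiplicativity of the $E$-polynomial in such bundles yields the stated identity. The component statement is immediate: since conjugation preserves each invariant $w_i = \det(A_i) \in \{\pm 1\}$, every stratum $\mathcal{R}_n^\tau(\C)_w$ is $GL_n(\C)$-stable, and the same argument applies verbatim to each $\mathcal{M}_{n,w}^\tau$.
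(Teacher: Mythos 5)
Your overall architecture --- compute the stabiliser, show it is the constant subgroup $\{\pm I_n\}$, then divide E-polynomials using the principal $GL_n(\C)/\{\pm I_n\}$-bundle $\mathcal{R}_n^\tau(\C)\to\mathcal{M}_n^\tau$ --- matches the paper's, and your Schur-lemma computation of the stabiliser is correct granted irreducibility. The gap is exactly where you suspect it: the irreducibility argument via the Huisman presentation does not close. In the relation $\Phi(A,X)=\xi I_n$ the factors are not all images of elements of $\pi_1(\Sigma'')$: in the non-orientable case $x_j\notin\pi_1(\Sigma'')$, so a $\rho$-invariant subspace $V$ is invariant under $X_j(X_j^T)^{-1}=\tilde\phi(x_j^2)$ but not under $X_j$ itself, and there is no reason for $\det\bigl(X_j(X_j^T)^{-1}|_V\bigr)$ to equal $1$. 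Likewise $\det(A_i|_V)=\pm 1$ only when $B_i|_V$ is non-degenerate, and $V$ may well be $B_i$-isotropic, in which case orthogonality of $A_i$ imposes no determinant constraint on $A_i|_V$. Neither of your proposed patches rescues this: restricting to a stable locus is not permitted, since the proposition asserts freeness modulo the centre on all of $\mathcal{R}_n^\tau(\C)$ and the E-polynomial identity needs the bundle structure globally; and a radical-filtration argument leaves the non-orientable $X_j$ factors untouched.

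The fix is to abandon the Huisman presentation for this step. The restriction $\rho=\tilde\phi|_{\pi_1(\Sigma'')}$ is a point of $\mathcal{R}_n(\C)$, i.e.\ it satisfies $\prod_{i=1}^g[A_i,B_i]=\xi^2 I_n$ in the standard presentation, where every factor is an honest commutator, and \cite[Lemma 2.2.6]{HRV} already shows that its stabiliser consists only of scalar matrices. This is what the paper does: the forgetful map $\mathcal{R}_n^\tau(\C)\to\mathcal{R}_n(\C)$ is $GL_n(\C)$-equivariant, so stabilisers in $\mathcal{R}_n^\tau(\C)$ are scalar, and among scalars only $\pm I_n$ centralise the non-identity component of $\widetilde{GL}_n(\C)$ (your computation $gAg^T=A\Rightarrow\lambda^2=1$). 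One further small omission: your bundle has structure group $GL_n(\C)/\{\pm I_n\}$, so to obtain $E(GL_n(\C))$ in the denominator you still need $E\bigl(GL_n(\C)/\{\pm I_n\}\bigr)=E(GL_n(\C))$; the paper gets this from the isomorphism $GL_n(\C)\cong GL_n(\C)/\{\pm I_n\}$ before invoking multiplicativity.
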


\begin{proof}
The forgetful map $\mathcal{R}_n^\tau(\C) \rightarrow \mathcal{R}_n(\C)$ is $GL_n(\C)$-equivariant and it was proven in \cite[Lemma 2.2.6]{HRV} that every point in $\mathcal{R}_n(\C)$ is stabilized only by scalar matrices. However, the only scalar matrices that centralize elements in the non-identity component of $\widetilde{GL}_n(\C)$ are $\{ \pm I_n\}$ so the quotient map $$  \mathcal{R}_n^\tau(\C) \rightarrow \mathcal{M}_n^\tau$$ is a principal $GL_n(\C)/ \{\pm I_n\}$-bundle. Since $GL_n(\C) \cong GL_n(\C)/ \{\pm I_n\}$ we have
$$  E( \mathcal{R}_n^\tau(\C) ) = E( \mathcal{M}_n^\tau) E( GL_n(\C))$$
by \cite[Remark 2.5]{LMN}. 
\end{proof}

It remains to calculate $ E( \mathcal{R}_n^\tau(\C) )$. We use a point counting argument analogous to that used by Hausel and Rodriguez-Villegas \cite{HRV}.

\begin{prop}\label{invokeKatz}
Suppose $p^\tau(t) \in \Z[t]$ is a polynomial such that the cardinality  $p^{\tau}(q) = |\mathcal{R}_n^\tau(\F_q)|$ for $char(q) \gg 1$. Then $p(xy) = E( \mathcal{R}_n^\tau(\C) )$. 
\end{prop}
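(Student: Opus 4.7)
The plan is to invoke Katz's theorem on polynomial-count varieties, exactly mirroring the argument used by Hausel and Rodriguez-Villegas for the analogous statement about $\mathcal{R}_n(\C)$ in \cite{HRV} (alternatively \cite[Cor.~6.5]{I}). Since the conclusion $E(\mathcal{R}_n^\tau(\C)) = p^\tau(xy)$ is a direct consequence of such a theorem once the polynomial-count hypothesis has been checked in the proper sense, the task reduces to a standard spreading-out argument.

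First I would realise $\mathcal{R}_n^\tau$ as an affine scheme of finite type over a finitely generated subring $R \subset \C$. Using Huisman's presentation of $\widetilde{\pi_1(\Sigma'')}$ stated above, the representation variety is cut out of $GL_n^{2r+s}$ by the polynomial equations (\ref{EqforF}) and (\ref{Eqforprod}), all of whose coefficients lie in $R := \Z[\xi, 1/(2n)]$, where $\xi$ is the chosen primitive $2n$-th root of unity in $\C$. For any ring homomorphism $\phi: R \to \overline{\F_q}$ with $char(\F_q) \nmid 2n$, the image $\phi(\xi)$ is a primitive $2n$-th root of unity in $\F_q$, and the base change $\mathcal{R}_n^\tau \otimes_R \F_q$ is cut out of $GL_n(\F_q)^{2r+s}$ by the same equations with $\phi(\xi)$ in the role of $\xi$.

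Next I would verify that the count $|\mathcal{R}_n^\tau \otimes_R \F_q|$ depends only on $q$, not on the choice of $\phi$. Any two primitive $2n$-th roots of unity in $\F_q$ differ by a Galois automorphism, and the corresponding representation varieties are intertwined by this Galois action together with conjugation by a suitable element of $GL_n(\F_q)$; this makes the cardinality Galois-invariant. Combined with the hypothesis that $|\mathcal{R}_n^\tau(\F_q)| = p^\tau(q)$ for $char(q) \gg 1$, this verifies the polynomial-count hypothesis of Katz's theorem in the strong form needed, and its conclusion is precisely
\[ E(\mathcal{R}_n^\tau(\C); x,y) = p^\tau(xy). \]

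The real obstacle lies not here but in the hypothesis itself: establishing that $|\mathcal{R}_n^\tau(\F_q)|$ is a polynomial in $q$ for $char(q) \gg 1$ is the substantive task, to be carried out in later sections via the harmonic-analytic computation of the convolution product (\ref{convueq}) together with an explicit determination of the multiplicities $a_\chi$ in (\ref{achidec}). The spreading-out and Galois-independence steps needed to invoke Katz are routine adaptations of the corresponding arguments in \cite{HRV} and should present no essential difficulty.
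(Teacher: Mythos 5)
Your proposal takes essentially the same route as the paper, which spreads $\mathcal{R}_n^\tau$ out as an affine scheme over the cyclotomic ring $A = \Z[x]/(\Phi_{2n}(x))$ (in effect your $\Z[\xi,1/2n]$), identifies $\mathcal{R}_n^\tau(\F)$ with the base change along $\phi: A \to \F$, $x \mapsto \xi$, and invokes Katz's theorem \cite[Thm.\ 2.1.8]{HRV}. One minor caution on a detail the paper itself leaves implicit: two primitive $2n$th roots of unity in $\F_q$ need not differ by a Galois automorphism of $\F_q$ (Frobenius only realizes the subgroup generated by $p$ in $(\Z/2n)^\times$), so the independence of the count from the choice of $\phi$ should instead be extracted from the character-sum formula, exactly as in \cite{HRV}.
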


\begin{proof}
Let $\Phi_{d}(x)$ the $d$th cyclotomic polynomial and consider the ring $A:= \Z[x]/(\Phi_{2n}(x))$. We can interpret $\mathcal{R}^{\tau}_n$ as an affine scheme over $A$ and $\mathcal{R}^{\tau}_n(\F)$ as the variety obtained by an extension of scalars $\phi: A \rightarrow \F$ which sends $x$ to the chosen primitive $2n$th root of unity $\xi \in \F$. The result now follows by Katz' Theorem \cite[Thm. 2.1.8]{HRV}.
\end{proof}

Next, we want an expression for $ |\mathcal{R}_n^\tau(\F_q)|$. Define functions $N, F, C$ from $G_n := GL_n(\F_q)$ to $\Z_{\geq 0}$ as follows:
\begin{align*}
 F(A) & := \left| \left\{ B \in G_n \, : \, B^T = B, ABA^T = B \right\} \right| \\
 N(A) & :=  \left| \left\{ B \in G_n \, : \, B (B^T)^{-1} = A \right\} \right| \\
 C(A) & := \left| \left \{ (X,Y) \in G_n^2 \, : \, [X,Y] = X Y X^{-1} Y^{-1} = A \right\} \right|.
\end{align*} 
If $\Sigma^\tau$ has $r$-path components, let $r+s = g+1$. It follows from (\ref{EqforF}) and (\ref{Eqforprod}) that the cardinality of $ \mathcal{R}_n^\tau(\F_q)$ is equal to the value of the following convolution product at $\xi I_n$:
$$ | \mathcal{R}_n^{\tau} (\F_q) | = \begin{cases}
\left( F^{*r} * C^{* s/2}  \right) (\xi Id_n) & \text{ if $\Sigma/\tau$ is orientable ($s$ is even in this case)}\\
  \left(F^{*r} * N^{*s} \right)(\xi Id_n) & \text{ if not} .\end{cases}$$
In fact, 
\begin{equation}\label{N2C}
 N*N = C,
 \end{equation}
so  
\begin{equation}\label{conprodfu}
| \mathcal{R}_n^\tau(\F_q)|  =  F^{*r} * N^{*(g-r+1)} (\xi I_n) .
\end{equation}
is independent of the orientability of $\Sigma/\tau$.  Gow \cite{G} proved that $N$ is the sum of the irreducible characters of $G_n$, each with multiplicity one $$ N = \sum_{\chi \in \Irr G_n} \chi.$$
The decomposition of $C$ can be found in (\cite{HRV} (2.3.7)) and the identity (\ref{N2C}) follows using (\ref{FouTrans1}). 

Observe that $F$ is a character function on $G_n$ because $F(g)$ counts $g$-fixed points for the action of $G_n$ on the set of non-degenerate symmetric bilinear forms on $\F_q^n$. This implies that 
$$F =  \sum_{\chi \in \Irr G_n} \sa_\chi \chi. $$
where the $\sa_\chi \in \Z_{\geq 0}$ are multiplicities of irreducible characters.

\begin{cor}\label{FormulaCor0}
If the function
\begin{eqnarray*}
E_n(q) &=& |G_n|^{g-1}  \sum_{\chi \in \Irr G_n}  \frac{\chi(\xi)}{\chi(1)^g} \sa_{\chi}^r\\
\end{eqnarray*}
is a polynomial function in $q$ for $char(q) \gg 1$, then 
\begin{eqnarray*}
 E(\mathcal{M}_n^\tau) &=& E_n(xy).
\end{eqnarray*}
\end{cor}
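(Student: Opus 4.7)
The plan is to combine the convolution formula (\ref{conprodfu}) with the harmonic analysis identity (\ref{FouTrans1}) to obtain an explicit expression for $|\mathcal{R}_n^\tau(\F_q)|$, then invoke the earlier two propositions (Katz and the quotient formula) to pass to the E-polynomial.

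First I would expand both $F$ and $N$ in the basis of irreducible characters: by hypothesis $F = \sum_\chi a_\chi \chi$, and by Gow's result $N = \sum_\chi \chi$. Iterating (\ref{FouTrans1}), a $k$-fold convolution of class functions expanded as $\sum_\chi c_\chi^{(i)}\chi$ becomes
\begin{equation*}
\phi_1 * \cdots * \phi_k = \sum_{\chi \in \Irr G_n} \left( \frac{|G_n|}{\chi(1)}\right)^{k-1} \left( \prod_{i=1}^k c_\chi^{(i)} \right) \chi.
\end{equation*}
Applying this to the $(r) + (g-r+1) = g+1$ factors in $F^{*r} * N^{*(g-r+1)}$ (where the coefficients from the $N$ factors are all $1$), I obtain
\begin{equation*}
F^{*r} * N^{*(g-r+1)} = \sum_{\chi \in \Irr G_n} \left( \frac{|G_n|}{\chi(1)}\right)^{g} a_\chi^r \, \chi.
\end{equation*}

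Next I would evaluate both sides at $\xi I_n$. By (\ref{conprodfu}) the left side equals $|\mathcal{R}_n^\tau(\F_q)|$, so
\begin{equation*}
|\mathcal{R}_n^\tau(\F_q)| = |G_n|^g \sum_{\chi \in \Irr G_n} \frac{a_\chi^r \, \chi(\xi)}{\chi(1)^g} = |G_n| \cdot E_n(q).
\end{equation*}
Since $|G_n| = |GL_n(\F_q)|$ is manifestly a polynomial in $q$, the polynomiality hypothesis on $E_n(q)$ forces $|\mathcal{R}_n^\tau(\F_q)|$ to be a polynomial in $q$ for $\mathrm{char}(q) \gg 1$. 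Proposition \ref{invokeKatz} then gives $E(\mathcal{R}_n^\tau(\C)) = E_n(xy) \cdot |G_n|(xy)$, where $|G_n|(xy) = E(GL_n(\C))$ by the standard stratification of $GL_n(\C)$ (or by applying Katz's theorem to $GL_n$ itself).

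Finally, dividing by $E(GL_n(\C))$ as in Proposition \ref{quotienteplo} yields $E(\mathcal{M}_n^\tau) = E_n(xy)$, as desired. There is no real obstacle in this corollary: the substantive work lies in (i) establishing the convolution identity (\ref{conprodfu}), which has already been done, and (ii) verifying the polynomiality of $E_n(q)$, which is taken as a hypothesis here and is the content of the remainder of the paper (the determination of the multiplicities $a_\chi$ in \S\ref{Characters of}).
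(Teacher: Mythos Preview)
Your argument is correct and follows the same route as the paper: apply the Fourier transform identity (\ref{FouTrans1}) to the convolution (\ref{conprodfu}) to obtain $|\mathcal{R}_n^\tau(\F_q)| = |G_n|\,E_n(q)$, then invoke Proposition~\ref{invokeKatz} and Proposition~\ref{quotienteplo}. The only difference is that you spell out the iterated convolution formula more explicitly than the paper does.
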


\begin{proof}
Applying (\ref{FouTrans1}) and (\ref{conprodfu}) we get
$$ | \mathcal{R}_n^\tau(\F_q)| = |G_n|^{g}  \sum_{\chi \in \Irr G_n}  \frac{\chi(\xi)}{\chi(1)^{g}} \sa_{\chi}^r = |G_n| E_n(q).$$
We have $|G_n| = f(q)$ where $f(t) = \prod_{i=0}^{n-1} (t^n -t^i)$, so if $E_n(q)$ is a polynomial function for $char(q) \gg1$, then by Proposition \ref{invokeKatz} we have
$$E(\mathcal{R}_n^\tau(\C)) = f(xy) E_n(xy).$$ 
Lastly, note that $E(GL_n(\C)) = f(xy)$ and apply Proposition \ref{quotienteplo}.
\end{proof}

Suppose now that $\text{char} \, \F \neq 2$. Define $$F = F_+ + F_-$$ where $F_+$ is supported on the matrices with determinant $1$ and $F_-$ is supported on those of determinant $-1$. Let $1 \leq k \leq r$ be odd and choose $w \in \{\pm 1\}^r$ for which $k$-many coordinates equal $-1$. Then the cardinality of $ \mathcal{R}_n^\tau(\F_q)_w$ is equal to 
$$|\mathcal{R}_n^\tau(\F_q)_w| =  F_+^{*(r-k)}  * F_-^{*k}* N^{*(g-r+1)} (\xi I_n). $$
If
 \begin{align*}
 F_+ &:= \sum_{\chi} \ssb_\chi^+ \chi&  F_- &:= \sum_{\chi}\ssb^-_\chi \chi, 
 \end{align*}
then similar reasoning yields
\begin{cor}\label{FormulaCor}
If the function
\begin{eqnarray*}
E_n^k(q) &:=& |G_n|^{g-1}  \sum_{\chi \in \Irr G}  \frac{\chi(\xi)}{\chi(1)^{g}} (\ssb^+_{\chi})^{r-k}(\ssb^-_{\chi})^{k}\\
\end{eqnarray*}
is a polynomial function of $q$ for $ char(q)\gg 1$, then 
\begin{eqnarray*}
 E(\mathcal{M}_{n,w}^\tau) &=& E_n^k(xy).
\end{eqnarray*}
\end{cor}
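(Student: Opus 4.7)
The plan is to mimic the proof of Corollary \ref{FormulaCor0} almost verbatim, with the sole refinement of tracking determinants. First I would observe that $F_+$ and $F_-$ are individually class functions on $G_n$: the support condition $\det(A) = \pm 1$ is conjugation-invariant, and the set of symmetric $B$ with $ABA^T = B$ depends only on the conjugacy class of $A$. Consequently both admit expansions $F_\pm = \sum_\chi b_\chi^\pm \chi$ in irreducible characters. Next, since $F_{w(i)}(A_i)$ counts precisely the pairs $(A_i, B_i)$ with $\det(A_i) = w(i)$, the presentation of $\mathcal{R}_n^\tau(\F_q)$ via (\ref{EqforF}) and (\ref{Eqforprod}), together with (\ref{N2C}), yields
$$|\mathcal{R}_n^\tau(\F_q)_w| = \left( F_+^{*(r-k)} * F_-^{*k} * N^{*(g-r+1)} \right)(\xi I_n)$$
for any $w \in \{\pm 1\}^r$ with exactly $k$ coordinates equal to $-1$; by commutativity of convolution of class functions the right-hand side depends only on $k$.

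I would then iterate the Fourier identity (\ref{FouTrans1}): for class functions $\phi_1,\ldots,\phi_m$ with expansions $\phi_i = \sum_\chi c_i^\chi \chi$, the coefficient of $\chi$ in the $m$-fold convolution is $(|G_n|/\chi(1))^{m-1}\prod_i c_i^\chi$. Applying this with $m = r + (g-r+1) = g+1$ factors and using Gow's identity $N = \sum_\chi \chi$ for the final $g-r+1$ entries, then evaluating at $\xi I_n$, gives
$$|\mathcal{R}_n^\tau(\F_q)_w| \;=\; |G_n|^g \sum_{\chi \in \Irr G_n} \frac{\chi(\xi I_n)}{\chi(1)^g}(b_\chi^+)^{r-k}(b_\chi^-)^k \;=\; |G_n|\cdot E_n^k(q).$$

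To conclude, the hypothesis that $E_n^k(q)$ is polynomial for $\text{char}(q)\gg 1$, combined with $|G_n| = f(q)$ for the polynomial $f(x) = \prod_{i=0}^{n-1}(x^n-x^i)$, shows $|\mathcal{R}_n^\tau(\F_q)_w|$ is polynomial in $q$ for $\text{char}(q) \gg 1$. Proposition \ref{invokeKatz} then yields $E(\mathcal{R}_n^\tau(\C)_w) = f(xy)\,E_n^k(xy)$, and dividing by $E(GL_n(\C)) = f(xy)$ via Proposition \ref{quotienteplo} produces the desired identity $E(\mathcal{M}_{n,w}^\tau) = E_n^k(xy)$. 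There is no serious obstacle here beyond that already overcome in Corollary \ref{FormulaCor0}; the only genuinely new input is the observation that $F_\pm$ are themselves class functions (so that the Fourier decomposition in the $\chi$-basis is legitimate) and the bookkeeping that matches the $w$-component to the pairing of $r-k$ factors of $F_+$ with $k$ factors of $F_-$. Both are routine, which is why the statement can be packaged as an immediate corollary of the preceding formalism.
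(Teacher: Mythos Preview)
Your proposal is correct and follows exactly the approach the paper intends: the paper does not write out a separate proof of Corollary~\ref{FormulaCor}, merely stating that ``similar reasoning yields'' it after recording the convolution identity $|\mathcal{R}_n^\tau(\F_q)_w| = (F_+^{*(r-k)} * F_-^{*k} * N^{*(g-r+1)})(\xi I_n)$, and what you have written is precisely that similar reasoning spelled out in full. The only minor point is that Proposition~\ref{invokeKatz} is stated for $\mathcal{R}_n^\tau$ rather than for the component $\mathcal{R}_n^\tau(\cdot)_w$, but the same Katz-type argument applies since the determinant constraints $\det(A_i) = w(i)$ are polynomial over the base ring $A = \Z[x]/(\Phi_{2n}(x))$; this is implicit in the paper as well.
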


Let $\rho$ be the representation of $GL_n(\F_q)$ such that $F = tr(\rho)$, let  $\chi =  \iota \circ det$ the composition of the determinant map $GL_n(\F_q) \rightarrow \F_q^\times$ with an injective homomorphism $\iota: \F_q^{\times} \rightarrow \C^\times$. Then $\tilde{F} = tr(\rho \otimes \chi)$ is a character equal to $F_+ - F_-$.  Therefore, if $\tilde{F}:= \sum \tilde{\sa}_{\chi} \chi$, then
\begin{align}\label{Ftilde}
\ssb^+_\chi  & =  \frac{1}{2}(\sa_\chi+\tilde{\sa}_\chi) &  \ssb^-_\chi & =  \frac{1}{2}(\sa_\chi -\tilde{\sa}_\chi).
\end{align}

\section{Conjugacy classes of \texorpdfstring{$GL_n(\F_q)$}{Lg}}

This section establishes notation for finite fields, partitions, and conjugacy classes. We mostly follow \cite{M}.

\subsection{Fields}

Denote $\F_q$ the finite field of order $q$ with algebraic closure $\overline{\F}_q$. The Frobenius map $Frob : \overline{\F}_q \to \overline{\F}_q$ is given by $x \mapsto x^q$. For $n \geq 1$, we identify $\F_{q^n} \subset \overline{\F}_q$ with the fixed point set of $Frob^n$. Denote the multiplicative groups by
\begin{align*}
M_n & := \F_{q^n}^\times,  & M & := \bigcup_{n} M_n =  \overline{\F}_q^\times.
\end{align*}
Denote the orbit set $\Phi = M/Frob$, and $\Phi_d \subseteq \Phi$ the set of orbits of order $d$. Each orbit in $\Phi$ is equal to the set of roots of an monic irreducible polynomial and we represent elements $f \in \Phi$ by the corresponding polynomial. Write $ d = d_f$ for $f \in \Phi_d$. 

The automorphism $x \mapsto x^{-1}$ of $M$ determines the automorphism $f \mapsto f^*$ of $\Phi$. Define
\begin{align}\label{definephis}
\Phi^s &:= \{ f \in \Phi | f=f^*\} & \Phi^{p} &= \{ \{f, f^*\} | f \neq f^*\}.
\end{align} 
Observe that 
\begin{align} \label{e:Phi1s}
\Phi_1^s = \{ t-1, t+1 \};
\end{align}
and
\begin{align*}
\Phi_{>1}^s := \Phi^s \setminus \Phi_1^s =  \bigcup_{d>1} \Phi_{2d}^s.
\end{align*}

\subsection{Partitions} \label{s:partitions}

Let $$\PP = \bigcup_{n \geq 0} \PP_n$$ where $\PP_n$ is the set of partitions of $n$ (note $\PP_0 = \{ \emptyset\}$). If $\lambda = (\lambda_1 \geq \cdots \geq \lambda_\ell) $ is a partition, define
\begin{align*}
| \lambda | & := \sum_{i=1}^\ell \lambda_i, & \ell(\lambda) & := \ell, &  n(\lambda) := \sum_{i=1}^\ell (i-1) \lambda_i.\\
\end{align*}
Set $\ell_{\odd}(\lambda) + \ell_{ev}(\lambda) = \ell(\lambda)$ where $ \ell_{\odd}(\lambda) := | \{ i \, : \, \lambda_i \text{ is odd} \}|$ and  set $sgn(\lambda) = (-1)^{\ell_{ev}(\lambda)}$. For $d \in \Z_{>0}$, set
\begin{align*}
m_{d}(\lambda) := | \{ i \, | \, \lambda_i = d \}|,
\end{align*}
called the \emph{multiplicity of $d$ in $\lambda$}. 

We also use notation $\lambda = ( 1^{m_1} 2^{m_2} ....)$ for $m_d = m_{d}(\lambda)$.  If $s \geq 1$ is an integer, write
\begin{align*}
 s  \lambda & =  (1^{s m_1} 2^{s m_2}....)  &  s \cdot \lambda & = ( s^{m_1} (2s)^{m_2} ....).
\end{align*}
If $\mu = (1^{r_1} 2^{r_2}....)$ define $$\lambda \cup \mu = (  1^{m_1+r_1} 2^{m_2 + r_2}...)$$
so that $s \lambda =  \lambda \cup ... \cup \lambda$.

For $m \in \Z_{>0}$, set $\varphi_m(y) := (1-y)(1-y^2) \cdots (1-y^m) \in \Z[y]$. Then for a partition $\lambda \in \PP$, we set
\begin{align*}
a_\lambda(y) := y^{|\lambda| + 2 n(\lambda)} \prod_{d \geq 1} \varphi_{m_{d}(\lambda)}(y^{-1})
\end{align*}
which is a polynomial in $y$.

\subsection{Conjugacy classes, types and symmetric types} \label{s:symmetrictypes}
Conjugacy classes in $GL_n(\F_q)$ are classified by rational canonical forms, or equivalently \cite[IV.2]{M} by maps $\bmu : \Phi \to \PP$ of \emph{norm} $\norm{\bmu}  =n$, where
\begin{align*}
\norm{\bmu} := \sum_{f \in \Phi} d_f | \bmu(f)|.
\end{align*}
Write $c_{\bmu}$ for the conjugacy class corresponding to the map $\bmu$.  The \emph{support of $\bmu$} is defined as $$\supp \bmu := \{ f \in \Phi \, : \, \bmu(f) \neq \emptyset \}.$$ Since $\{t \pm 1\}$ play a special role in this paper, we also make use of the set difference $$ \supp'(\bmu) := \supp \bmu \setminus \{ t \pm 1 \}.$$

The \emph{type} of $\bmu$ is the map $\rho : \PP \setminus \{ \emptyset \} \to \PP$ is defined by
\begin{align*}
\rho(\lambda)& = (1^{m_1} 2^{m_2} ...), &m_d  = m_{d,\lambda} = m_{d,\lambda}(\rho)&  := | \{ f \in \Phi_d \, : \, \bmu(f) = \lambda \} | .
\end{align*}
Define the \emph{norm} of a type by 
$$  \norm{\rho} = \sum_{ \lambda \in \PP \setminus \emptyset}  \sum_{d \geq 1}   d m_{d,\lambda}  |\lambda|. $$
Note that for a given $n$, the possible types of norm $n$ are independent of $q$ for $q$ sufficiently large.  

If $\bmu$ has type $\rho$, then \cite[IV(2.7)]{M} the order of the centralizer $Z(c_{\bmu}) \leq GL_n(\F_q)$ is
\begin{align}\label{formforcontr}
|Z(c_{\bmu})| = a_{\bmu}(q) := \prod_{f \in \Phi} a_{\bmu(f)}(q^{d_f}) = \prod_{\lambda \in \PP \setminus \{ \emptyset \} } \prod_{d \geq 1} a_\lambda(q^d)^{m_{d,\lambda}(\rho)},
\end{align}
 \cite[IV(2.7)]{M}. Notice the order depends only on the type $\rho$.

We call $\bmu$ \emph{symmetric} if $\bmu(f)=\bmu(f^*)$ for all $f \in \Phi$. The \emph{symmetric type} of $\bmu$ is the tuple $\eta = (\eta_+, \eta_-, \eta_s, \eta_{p})$, where $\eta_+$, $\eta_- \in \PP$, and $\eta_s$, $\eta_{p} : \PP \setminus \{ \emptyset \} \to \PP$ are defined by
\begin{enumerate}[(i)]
\item $\bmu(t\mp1) = \eta_{\pm}$, and
\item for $\lambda \in \PP \setminus \{ \emptyset \}$, one has
\begin{align*}
\eta_s(\lambda) &:= (1^{m_{1,\lambda}^s} 2^{m_{2,\lambda}^s}...) & m_{d,\lambda}^s &:=  | \{ f \in \Phi_{2d}^s \, : \, \bmu(f) = \lambda \} |\\
\eta_{p}(\lambda) &:= (1^{m_{1,\lambda}^p} 2^{m_{2,\lambda}^p}...) &  m_{d,\lambda}^p & : = | \{ \{f,f^*\} \in \Phi_d^{p} \, : \, \bmu(f) = \lambda \} |.
\end{align*}
\end{enumerate}
We write $\bmu \in \eta$ to indicate that $\bmu$ has symmetric type $\eta$. We sometimes abuse notation and write $\eta$ for the set of all conjugacy classes $c_{\bmu}$ of symmetric type $\eta$.

\section{An explicit formula for \texorpdfstring{$F$}{Lg}}\label{s:General formula for F}\label{Explicitform}

In this section, we use Milnor's classification of orthogonal transformations over perfect fields \cite{Mi} (following Williamson \cite{W}) to derive explicit formulas for the function $F: GL_n(\F_q) \rightarrow \Z_{\geq 0}$.  What we use in later sections is the following proposition.

\begin{prop} \label{t:Fformula}
The function $F$ vanishes on the conjugacy class $c_{\bmu}$ unless $\bmu$ is symmetric (i.e  $\bmu(f) = \bmu(f^*)$ for all $f \in \Phi$). If $\bmu$ has symmetric type $\eta$, then there exists a monic polynomial $b_{\eta}(y) \in \Z[y]$ depending only on $\eta$, such that 
\begin{equation}\label{polyforF}
 F(c_{\bmu}) = b_\eta(q).
 \end{equation}
The degree of $b_{\eta}(y)$ is equal to $\tfrac{1}{2} \sum_{f = t\pm 1} \ell_{odd}(\bmu(f))  +  \sum_{f \in \supp(\bmu)}  d_f \left( n(\bmu(f)) + \tfrac{1}{2} |\bmu(f)| \right)$.
\end{prop}

 Let $V$ be a finite-dimensional vector space over a finite field $\F = \F_q$ and let $t \in GL(V)$ be a linear automorphism.  Then there is a natural decomposition
\begin{align} \label{e:primarydecomp}
V = \bigoplus_{f \in \Phi} V_f,
\end{align}
where $V_f$ is the $f$-primary component of $V$ with respect to $t$.

\begin{prop} \label{p:orthM}
Suppose $t$ is orthogonal with respect to a non-degenerate symmetric bilinear form (ndsbf) $\langle,\rangle$.  Then $V_f$ is orthogonal to all components except $V_{f^*}$.   Consequently
$$ F(t) = \prod_{f \in \Phi^s} F(t|_{V_f}) \cdot \prod_{ \{f,f^*\} \in \Phi^p} F \left( t|_{V_f \oplus V_{f^*}} \right) $$
where the first product is over self-dual irreducible factors and the second is over distinct pairs $f, f^*$.
\end{prop}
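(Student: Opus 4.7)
The plan is to exploit the fact that $t$ being orthogonal with respect to $\langle\,,\,\rangle$ makes $t^{-1}$ the adjoint of $t$. Concretely, for any polynomial $p \in \F[x]$ and any $v,w \in V$ one has
\begin{align*}
\langle p(t) v, w \rangle = \langle v, p(t^{-1}) w \rangle.
\end{align*}

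First I would prove that $V_f \perp V_g$ whenever $g \neq f^*$. Fix $v \in V_f$, $w \in V_g$, and integers $k, \ell$ with $f(t)^k v = 0$ and $g(t)^\ell w = 0$. From the polynomial identity $g(t^{-1}) = g(0)\, t^{-\deg g}\, g^*(t)$, valid because $g(0) \neq 0$, it follows that $g^*(t^{-1})^\ell w = 0$; in other words, the $g$-primary component of $t$ equals the $g^*$-primary component of $t^{-1}$ as a subspace of $V$. Since $f \neq g^*$ and both are irreducible, B\'ezout provides $a, b \in \F[x]$ with $a(x) f(x)^k + b(x) g^*(x)^\ell = 1$. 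Evaluating at $t^{-1}$ and applying to $w$ yields $w = a(t^{-1}) f(t^{-1})^k w$, and therefore
\begin{align*}
\langle v, w \rangle = \langle v, a(t^{-1}) f(t^{-1})^k w \rangle = \langle f(t)^k a(t) v, w \rangle = 0,
\end{align*}
using the adjoint identity above together with $f(t)^k v = 0$.

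For the product formula I would simply observe that by the first step, any ndsbf on $V$ for which $t$ is orthogonal is the orthogonal direct sum of its restrictions to the blocks $V_f$ (for $f \in \Phi^s$) and $V_f \oplus V_{f^*}$ (for $\{f,f^*\} \in \Phi^p$). Conversely, any independent choice of ndsbf's on these blocks, each making the corresponding restriction of $t$ orthogonal, reassembles to a ndsbf on $V$ making $t$ orthogonal. This gives a bijection between the set enumerated by $F(t)$ and the product of the sets enumerated by $F(t|_{V_f})$ and $F(t|_{V_f \oplus V_{f^*}})$.

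The only mildly delicate step is the B\'ezout argument above: one must pair the primary decomposition of $t$ with the primary decomposition of its adjoint $t^{-1}$, and the cleanest way to do that is to pass to the $g^*$-component of $t^{-1}$, where coprimality with $f$ is manifest. Once this orthogonality is in hand, the factorisation of $F(t)$ is routine linear algebra and does not distinguish between the self-dual and paired block types beyond grouping them in the obvious way.
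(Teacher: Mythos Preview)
Your proof is correct and is essentially the same approach as the paper's: the paper simply cites \cite[Lemma 3.1]{Mi} for the orthogonality statement and declares the product formula immediate, whereas you have written out Milnor's argument (the adjoint of $t$ is $t^{-1}$, so the $f$-primary component for $t$ pairs only with the $f^*$-primary component) and made the bijection underlying the product formula explicit. There is nothing materially different in strategy.
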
 
\begin{proof}
Orthogonality is proven in \cite[Lemma 3.1]{Mi}. The consequences are immediate.
\end{proof}

\begin{prop}\label{pretoref}
Suppose that the minimal polynomial of $t \in GL(V)$ has only $f$ and $f^*$ as monic, irreducible factors where $f \neq f^*$ and $d= \deg f$. Then $F(t) = 0$ unless $t|_{V_{f}}$ is similar to $ (t^{-1})^T|_{V_{f^*}}$. If they are similar, then $F(t) =  a_{\bmu(f)}(q^{d})$. In particular, $F(t)$ equals a monic polynomial in $q$ of degree $2d \left( n(\bmu(f)) + \tfrac{1}{2} |\bmu(f)| \right)$.
\end{prop}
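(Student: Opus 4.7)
The plan is to combine Proposition \ref{p:orthM} with the primary decomposition $V = V_f \oplus V_{f^*}$, in which both summands are $t$-invariant and totally isotropic with respect to any $t$-invariant non-degenerate symmetric form $B$. First I would observe that such a $B$ is determined by its restriction to a bilinear pairing $\beta : V_f \times V_{f^*} \to \F_q$, via $B(v + w, v' + w') = \beta(v, w') + \beta(v', w)$ for $v, v' \in V_f$ and $w, w' \in V_{f^*}$; the symmetry of $B$ automatically forces the opposite pairing $V_{f^*} \times V_f \to \F_q$ to be the transpose of $\beta$, with no additional constraint. Hence $F(t)$ equals the number of non-degenerate $t$-invariant bilinear pairings $\beta : V_f \times V_{f^*} \to \F_q$, where $t$-invariance means $\beta(tv, tw) = \beta(v, w)$.

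Next I would identify such a pairing with a linear isomorphism $\psi : V_f \to V_{f^*}^*$, the $t$-invariance then becoming the intertwining relation $\psi \circ t|_{V_f} = (t|_{V_{f^*}})^{-T} \circ \psi$, where $(t|_{V_{f^*}})^{-T}$ denotes the contragredient action on the dual space. Since every matrix is conjugate to its transpose, such an intertwiner exists if and only if $t|_{V_f}$ is similar to $(t|_{V_{f^*}})^{-1}$, equivalently to $(t^{-1})^T|_{V_{f^*}}$; if no such $\psi$ exists, then $F(t) = 0$. Otherwise, fixing any one intertwiner $\psi_0$, every intertwiner has the form $\psi_0 \circ \alpha$ with $\alpha \in Z_{GL(V_f)}(t|_{V_f})$, so the count equals the order of the centralizer of $t|_{V_f}$ in $GL(V_f)$. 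By the single-primary-block factor of \eqref{formforcontr}, this centralizer has order $a_{\bmu(f)}(q^{d_f})$, giving $F(t) = a_{\bmu(f)}(q^{d_f})$.

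Finally, I would extract the leading term. Viewing $\varphi_m(t^{-1}) = (1 - t^{-1}) \cdots (1 - t^{-m})$ as a Laurent polynomial in $t$, its leading (highest power) term is $1$; multiplying these across $d$ and then by $t^{|\lambda| + 2 n_\lambda}$ shows that $a_\lambda(t)$ is monic of degree $|\lambda| + 2 n_\lambda$ in $t$. Setting $t = q^{d_f}$ with $\lambda = \bmu(f)$ produces a monic polynomial in $q$ of degree $d_f(|\bmu(f)| + 2 n_{\bmu(f)}) = 2 d_f \left( n_{\bmu(f)} + \tfrac{1}{2} |\bmu(f)| \right)$, as claimed.

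The main subtlety I anticipate is the bookkeeping in the first step: one must verify that imposing symmetry of $B$ introduces no constraint beyond determining the opposite pairing in terms of $\beta$, so that the count of valid forms coincides bijectively with the count of $t$-equivariant isomorphisms $V_f \to V_{f^*}^*$. Everything else is then a torsor count plus a transparent degree computation.
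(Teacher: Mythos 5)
Your proposal is correct and follows essentially the same route as the paper: both use Proposition \ref{p:orthM} to see that $V_f$ and $V_{f^*}$ are totally isotropic and dually paired, reduce the count of compatible forms to a torsor over the centralizer of $t|_{V_f}$ (the paper phrases this in matrix form, $t = \mathrm{diag}(A,(A^T)^{-1})$ with forms $\left[\begin{smallmatrix}0 & B\\ B^T & 0\end{smallmatrix}\right]$, $AB=BA$, while you phrase it via intertwiners $\psi\colon V_f \to V_{f^*}^*$), and conclude with the same degree computation for $a_{\bmu(f)}(q^{d_f})$. The bookkeeping point you flag about the symmetry of $B$ imposing no extra constraint is handled correctly.
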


\begin{proof}
If $F(t) \neq 0$, then $t$ is orthogonal with respect to some ndsbf $\langle,\rangle$. By Proposition \ref{p:orthM}, $\langle,\rangle$ determines a duality pairing between  $V_{f}$ and $V_{f^*}$. If we choose a basis for $V_{f}$ and the dual basis in $V_{f^*}$ then $t$ must have the form
$$ t =   \left[ \begin{array}{cc} A & 0 \\ 0 & (A^T)^{-1} \end{array} \right]. $$
The set of ndsbfs on $V$ compatible with $t$ are those represented by a symmetric matrix of the form
$$   \left[ \begin{array}{cc} 0 & B \\ B^T & 0 \end{array} \right] $$
where $AB = BA$ and $B$ is invertible. Consequently, $F(t)$ equals the order of the centralizer of $t|_{V_{f}}$ in $GL(V_{f})$, which equals $a_{\bmu(f)}(q^{d})$ by (\ref{formforcontr}).
\end{proof}

\begin{rmk}\label{remarkonFproperty}
Propositions \ref{p:orthM} and \ref{pretoref} imply that if  $\bmu(f) \neq \bmu(f^*)$ for some $f \in \Phi$ then $F(c_{\bmu})=0$. This means $F$ is supported on conjugacy classes of symmetric type.
\end{rmk}

Suppose that $V$ is $f$-primary with $f \in \Phi^s$ (recall (\ref{definephis})). By the fundamental theorem of PIDs, there is an isomorphism of $\F[t]$-modules
$$ V = V_1 \oplus V_2 \oplus \cdots \oplus V_k  $$
for some $k$, where $V_i \cong  \frac{\F[t]}{f(t)^i \F[t]} \otimes_\F \F^{m_i}$ for some sequence of non-negative integers $m_1,...,m_k$.

\begin{prop}
Suppose $V$ is as above and $d = \deg f$. Then
$$F(t) = q^{d \sum_{ 1\leq i < j \leq k} i m_i m_j }  \prod_{i = 1}^k F(t|_{V_i}). $$
\end{prop}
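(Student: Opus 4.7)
The plan is to organize $F(t)$ by the block structure induced by the decomposition $V = V_1 \oplus \cdots \oplus V_k$. Every $t$-compatible bilinear form on $V$ splits uniquely as $\sum_{i,j} \langle\,,\,\rangle_{ij}$ with $\langle\,,\,\rangle_{ij} : V_i \times V_j \to \F$ itself $t$-compatible, and symmetry of the total form is equivalent to $\langle\,,\,\rangle_{ji} = \langle\,,\,\rangle_{ij}^T$ for $i \neq j$ together with $\langle\,,\,\rangle_{ii}$ symmetric. So the parameter space for $t$-compatible symmetric forms is the product of the diagonal symmetric spaces with the off-diagonal spaces for $i<j$.

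First I would count the off-diagonal contribution. The space of $t$-compatible pairings $V_i \otimes V_j \to \F$ is naturally $\Hom_{\F[t]}(V_i, V_j^\vee)$, where $V_j^\vee$ carries the contragredient action $t \cdot \phi = \phi \circ t^{-1}$. Because $f \in \Phi^s$, i.e.\ $f = f^*$, the module $V_j^\vee$ is isomorphic to $V_j$ as an $\F[t]$-module (same elementary divisor structure $f^j$ with multiplicity $m_j$). Using $V_i \cong (\F[t]/f^i)^{m_i}$ and the standard computation $\Hom_{\F[t]}(\F[t]/f^i, \F[t]/f^j) \cong \F[t]/f^{\min(i,j)}$, we obtain $\dim_{\F} \Hom_{\F[t]}(V_i, V_j) = d \min(i,j)\, m_i m_j$. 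For $i < j$ this gives $q^{d i m_i m_j}$ choices, so the total off-diagonal contribution is $q^{d \sum_{i<j} i m_i m_j}$, matching the stated exponent.

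The key step is then to prove that for \emph{any} choice of the off-diagonal data, the number of symmetric diagonal choices making the total form non-degenerate equals $\prod_{i=1}^k F(t|_{V_i})$. My plan here is a change-of-basis reduction: starting from a form $\langle\,,\,\rangle$ with given off-diagonal part, I would iteratively build a unipotent element $u$ in the centralizer $Z_{GL(V)}(t) = \text{Aut}_{\F[t]}(V)$ that clears the off-diagonal blocks one pair at a time, reducing $\langle\,,\,\rangle$ to a block-diagonal form under $u^T \langle\,,\,\rangle u$. Concretely, if $\phi_{ij} : V_i \to V_j^\vee$ realizes the block $\langle\,,\,\rangle_{ij}$ and the diagonal block $\langle\,,\,\rangle_{jj}$ is non-degenerate (so that it induces an $\F[t]$-isomorphism $V_j \to V_j^\vee$), then a $t$-equivariant endomorphism $V_i \to V_j$ can be chosen to kill $\phi_{ij}$ by a Gauss-type elimination; lifting this to a unipotent on all of $V$ and iterating from the largest index downward gives the desired $u$. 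Once the form is block-diagonal, non-degeneracy of $\langle\,,\,\rangle$ on $V$ is equivalent to non-degeneracy of each $\langle\,,\,\rangle_{ii}$ on $V_i$, so the diagonal counts multiply to $\prod_i F(t|_{V_i})$; because $u$ acts trivially on the off-diagonal data (once reduced) and the change of variable is a bijection on the diagonal parameter space, the count is independent of the off-diagonal choice.

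The main obstacle is setting up the elimination step cleanly, because the $V_i$ are of different sizes as $\F[t]/f^\infty$-modules and one must verify that the elimination can be performed $t$-equivariantly (i.e.\ by an element of $Z_{GL(V)}(t)$) and that it preserves symmetry. I would handle this by downward induction on $k$: peeling off $V_k$ first, using that $\langle\,,\,\rangle_{kk}$ is non-degenerate (this is a necessary condition since $V_k$ contains a copy of $\F[t]/f^k$ whose annihilator $f^{k-1}$ does not vanish on any other $V_i$), then applying the induction hypothesis to $V_1 \oplus \cdots \oplus V_{k-1}$. Combining the off-diagonal count with the reduction then yields the displayed formula.
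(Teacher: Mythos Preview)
Your approach is correct and is essentially the paper's argument: both peel off $V_k$ by downward induction, invoking Milnor's result \cite[Thm.~3.2]{Mi} that any $t$-invariant nondegenerate symmetric form restricts nondegenerately to $V_k$ (your one-line justification via $f^{k-1}$ is the right idea but needs the self-adjointness of $f(t)/t^d$ to be made precise). The paper phrases the inductive step as ``choose $B|_{V_k}$, then choose an $\F[t]$-module complement of $V_k$ to serve as $V_k^\perp$ (there are $q^{d\sum_{i<k} i m_i m_k}$ of these, matching your off-diagonal $\Hom_{\F[t]}(V_i,V_k)$ count), then recurse on the induced form on $V_k^\perp \cong V_1\oplus\cdots\oplus V_{k-1}$''; this is a slightly cleaner organization than your block/Schur-complement elimination, which must additionally note that clearing the $(i,k)$ blocks perturbs the lower off-diagonal blocks and hence that the induction hypothesis is needed in the form ``the diagonal count is independent of the off-diagonal data'', but the two amount to the same thing.
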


\begin{proof}
Any ndsbf left invariant by $t$ restricts non-degenerately to $V_k$ \cite[Thm. 3.2]{Mi}, so it determines an orthogonal decomposition $V_k \oplus V_k^{\perp}$ and $V_k^{\perp}$ is isomorphic as $\F[t]$-module to $V_1 \oplus \cdots \oplus V_{k-1}$.  The number of complements of $V_k$ in $V$ as a $\F[t]$-module is equal to the number $\F[t]$-module splittings of $ 0 \rightarrow V_k \rightarrow V \rightarrow V/V_k \rightarrow 0$ which is equal to $q^{d \sum_{i=1}^{k-1} i m_i m_k}$. Therefore
$$ F(t) =  q^{d \sum_{i=1}^{k-1} i m_i m_k} F(t|_{V_k}) F( t|_{V_1 \oplus \cdots V_{k-1}}).$$
The formula follows by induction.
\end{proof}

\begin{prop} \label{p:Fsddeg1}
Suppose that $f \in \Phi^s_1$ and $V = V_i \cong \frac{\F[t]}{f^i \F[t]} \otimes_\F \F^m$. Then  $$F(t) =  \begin{cases} q^{(i m^2 + m)/2 } \prod_{j=1}^{m/2} (1- q^{1-2j})  & \text{if $i$ is odd and $m$ is even} \\ 
q^{(im^2 + m)/2 } \prod_{j=1}^{(m+1)/2} (1- q^{1-2j})  & \text{if $i$ is odd and $m$ is odd} \\   
 q^{i m^2/2 } \prod_{j=1}^{m/2} (1- q^{1-2j})   & \text{if $i$ is even and $m$ is even}\\
 0 & \text{if $i$ is even and $m$ is odd}  \end{cases}.   $$
\end{prop}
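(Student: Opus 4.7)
By the remark following Proposition~\ref{pretoref}, $F(c_{\bmu}) = 0$ unless $\bmu(f) = \bmu(f^*)$ for every $f \in \Phi$; since $V = V_i$ is $f$-primary, this forces $f = f^*$, which together with $\deg f = 1$ gives $f \in \{t-1, t+1\}$. Write $f = t - \epsilon$ with $\epsilon \in \{\pm 1\}$, so $t = \epsilon I + N$ with $N$ nilpotent of Jordan type $(i^m)$. Since $t^T B t = B$ if and only if $(\epsilon t)^T B (\epsilon t) = \epsilon^2 B = B$, replacing $t$ by $\epsilon t = I + \epsilon N$ reduces to the unipotent case $\epsilon = 1$. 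With $t = I + N$, expanding $B(tu, tv) = B(u, v)$ gives the basic identity $B(Nu, v) + B(u, Nv) + B(Nu, Nv) = 0$, and iterating it modulo $N^i V = 0$ yields $B(N^{i-1}u, v) = (-1)^{i-1} B(u, N^{i-1} v)$. Hence the \emph{leading form} $\bar B$ on $V/NV \cong \F_q^m$ given by $\bar B(\bar x, \bar y) := B(x, N^{i-1} y)$ is well-defined, symmetric when $i$ is odd and alternating when $i$ is even. Applying the basic identity with $v \in \ker N$ gives $NV \subseteq (\ker N)^\perp$, and equality $(\ker N)^\perp = NV$ follows by comparing dimensions. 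Consequently $B$ is non-degenerate on $V$ if and only if $\bar B$ is non-degenerate on $V/NV$; in particular $F(t) = 0$ when $i$ is even and $m$ is odd, since no non-degenerate alternating form exists on $\F_q^m$ in that case.

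The core technical step is to show that the linear map $B \mapsto \bar B$ from the space of $t$-invariant symmetric bilinear forms on $V$ to forms of the correct parity on $V/NV$ is surjective, with every non-empty fiber an affine space over $\F_q$ of dimension
\begin{equation*}
D = \tfrac{(i-1)m^2}{2} \text{ if } i \text{ is odd}, \qquad D = \tfrac{(i-1)m^2 + m}{2} \text{ if } i \text{ is even}.
\end{equation*}
To establish this, I fix a section $W$ of $V \to V/NV$, giving a decomposition $V = W \oplus NW \oplus \cdots \oplus N^{i-1} W$, and encode $B$ by its components $B_{k,k'}(x,y) := B(N^k x, N^{k'} y)$ for $x, y \in W$ and $0 \leq k, k' \leq i-1$. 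The $t$-invariance condition becomes the linear recursion
\begin{equation*}
B_{k, k'+1} + B_{k+1, k'} + B_{k+1, k'+1} = 0,
\end{equation*}
with the convention that $B_{k,k'} = 0$ whenever $k \geq i$ or $k' \geq i$. Solving the recursion iteratively, each $B_{k, k'}$ with $k, k' \geq 1$ is expressible in terms of the boundary ``first row'' $(B_{0, 0}, B_{0, 1}, \ldots, B_{0, i-1})$ and ``first column'' $(B_{\cdot, 0})$; the symmetry of $B$ identifies the latter with the transposes of the former. The recursions with $k = i - 1$ (or $k' = i - 1$) then impose linear constraints on the first row, and a careful count of the remaining degrees of freedom --- governed by the symmetric and antisymmetric parts of the $B_{0, k'}$'s --- yields exactly $D$ free parameters independent of $\bar B = B_{0, i-1}$.

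The assembly is then immediate: $F(t) = q^D \cdot N_m$, where $N_m$ is the number of non-degenerate forms of the correct parity on $\F_q^m$. For $i$ odd, one uses the classical count $N_m^{\textnormal{sym}} = q^{m(m+1)/2} \prod_{j=1}^{\lceil m/2 \rceil}(1 - q^{1-2j})$ of non-degenerate symmetric bilinear forms on $\F_q^m$; for $i$ even with $m$ even, one uses $N_m^{\textnormal{alt}} = |GL_m(\F_q)|/|Sp_m(\F_q)| = q^{m(m-1)/2} \prod_{j=1}^{m/2}(1-q^{1-2j})$. Multiplying by $q^D$ and simplifying the exponents $(i-1)m^2/2 + m(m+1)/2 = (im^2 + m)/2$ and $((i-1)m^2+m)/2 + m(m-1)/2 = im^2/2$ recovers precisely the four cases of the proposition. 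The principal technical obstacle is the combinatorial analysis in the previous paragraph: extracting the correct value of $D$ from the interplay between the $t$-invariance cascade and the symmetry of $B$, and verifying that $D$ is independent of $\bar B$.
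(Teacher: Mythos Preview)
Your argument is correct and follows the same overall architecture as the paper: pass to a leading form on $V/NV$ of the appropriate parity, factor $F(t)$ as (number of non-degenerate leading forms) $\times$ (fiber size), and compute each factor. The paper uses $\Delta = t - t^{-1}$ rather than your $N = t - \epsilon$, but since $\Delta$ and $N$ generate the same maximal ideal when $f = t\mp 1$, the induced filtrations and leading forms coincide and this is purely cosmetic.

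The genuine difference lies in how the fiber size $q^D$ is computed. The paper invokes Milnor's existence of \emph{standard lifts} (\cite[Thm.~3.4]{Mi}): one fixes a basis of $V/f(t)V$, counts all lifts to $V$ ($q^{(i-1)m^2}$ of them), and then counts which lifts put $B$ into a canonical block form by solving the matrix equation $Y^TXY = X$ for an explicit $X$. The quotient gives $\beta = q^D$. Your approach bypasses Milnor's normal-form theorem entirely and parametrises the linear space of $t$-invariant symmetric $B$ directly via the recursion $B_{k,k'+1} + B_{k+1,k'} + B_{k+1,k'+1} = 0$. This is more elementary in that it avoids the black-box existence statement, at the cost of a combinatorial bookkeeping exercise you only sketch. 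Both routes are valid; yours is arguably cleaner conceptually (the fiber is literally an affine subspace), while the paper's route benefits from Milnor having already done the hard work.

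One point deserves tightening: your justification of ``$B$ non-degenerate $\Leftrightarrow$ $\bar B$ non-degenerate'' only covers the forward direction, since the dimension comparison $\dim(\ker N)^\perp = \dim V - \dim\ker N$ already presupposes $B$ non-degenerate. The reverse implication is true but needs a short filtration argument: define $\bar B_k$ on $N^kV/N^{k+1}V$ by $\bar B_k([N^kx],[N^ky]) := B(N^kx, N^{i-1-k}y)$ (well-defined since $B(N^au,N^bv)=0$ for $a+b\geq i$, which follows from iterating your basic identity); each $\bar B_k$ agrees with $\bar B_0 = \bar B$ up to sign via the isomorphism induced by $N$, hence all are non-degenerate; then any $x\in\mathrm{rad}(B)$ is forced successively into $NV, N^2V,\ldots, N^iV=0$. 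The paper sidesteps this issue because Milnor's standard lift construction only produces non-degenerate $B$'s to begin with.
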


\begin{proof}
Let $f \in \Phi^s_1 =  \{t\pm 1\} $ and let $\Delta := t - t^{-1} = (t-1)(t+1) t^{-1}$. If $\langle,\rangle$ is a ndsbf on $V$ for which $t$ is orthogonal then $\Delta$ is skew adjoint in the sense that $\langle \Delta v, w \rangle = - \langle v, \Delta w\rangle$. This determines a non-degenerate bilinear form on $V/f(t)V \cong \F^m_q$ defined by $$(v) \cdot (w) := \langle \Delta^{i-1} v, w\rangle.$$ The form $\cdot$ is symmetric if $i$ is odd and antisymmetric if $i$ is even. Therefore $$F(t) = \alpha \beta$$ where $\alpha$ is the number of nondegenerate $(-1)^{i-1}$-symmetric form on $V/f(t)V$ and $\beta$ is the number of $t$ compatible ndsbf on $V$ associated to a given form on $V/f(t)V$. 

If $i$ is odd, then $\alpha$ equals the number of ndsbf on $\F_q^m$.  This set decomposes into a union of two $GL_m(\F_q)$ orbits, so 
$$ \alpha  =  \sum_B  |GL_m(\F_q)|/ |O(B)|  $$
summing over representatives $B$ of the two equivalence classes of ndsbf.  

If $i$ is even, then $\alpha$ is equal to the number of non-degenerate skew-symmetric bilinear forms on $\F_q^m$.  Therefore
\begin{align*}
\alpha  = \begin{cases}
|GL_{m}(\F_q)| / |Sp_{m}(\F_q)|  & \text{if $m$ is even} \\
0 & \text{if $m$ is odd} .
\end{cases}
\end{align*}

Substituting the orders of groups (see e.g. \cite{LS}) and we obtain
\begin{align} 
\alpha   = \begin{cases}  q^{(m^2+m)/2} \prod_{j=1}^{m/2} (1- q^{1-2j}) & \text{ if $i$ is odd and $m$ is even} \\ q^{(m^2 +m)/2} \prod_{j=1}^{(m+1)/2} (1- q^{1-2j}) & \text{ if $i$ is odd and $m$ is odd}\\
  q^{(m^2-m)/2} \prod_{j=1}^{m/2} (1-q^{1-2j} ) & \text{ if $i$ is even and $m$ is even}\\
0 & \text{ if $i$ is even and $m$ is odd.}
\end{cases} 
\end{align}

We turn now to $\beta$. Given an ndsbf $\langle,\rangle$, choose a basis $(v_1),...,(v_m)$ of $V/f(t)V$.  A choice 
of representatives $v_1,...,v_m \in V$  will be called a \emph{lift}. The number of lifts is $q^{(i-1)m^2}$. Any lift extends to a basis of $V$, 
\begin{equation}\label{extbass}
\{ \Delta^{s} v_k| s \in \{0,...,i-1\}, k \in \{1,...,m\}\}.
\end{equation} 
By \cite[Thm. 3.4]{Mi}, lifts exist that satisfy  
$$ \langle \Delta^s v_k, \Delta^t v_l\rangle = \begin{cases} (-1)^t (v_k) \cdot (v_l) & \text{ if $s+t = i-1$} \\ 0 & \text{ otherwise.} \end{cases} $$ 
Call such a lift a \emph{standard lift} for $\langle, \rangle$. It follows that $\beta$ equals the number of lifts divided by the number of standard lifts for a given $\langle ,\rangle$. It remains to count the number of standard lifts for a given non-degenerate form $\cdot$ on $V/f(t)V$.

Fix a particular standard lift $v_1,...,v_m$. If $\cdot$ is described by a $(-1)^{i-1}$-symmetric $m\times m$-matrix $A$, then in terms of the basis (\ref{extbass}), $\langle,\rangle$ is described by the symmetric $(im) \times (im)$-matrix
\begin{align}\label{standardliftform}
X := \left[ \begin{array}{ccccccc}
0 & 0 & ... &(-1)^{i-1} A \\
0 &0 &... & 0\\
\vdots &\vdots &\vdots &\vdots \\
0& -A& ... & 0\\
A & 0 &...  & 0 \end{array}
\right].
\end{align}
Then another lift $v_1',...,v_m'$ will be standard if and only if the change of basis matrix sending $\Delta^s v_k \mapsto \Delta^s v_k'$ is a lower triangular matrix of the form
\begin{align*}
Y: = \left[ \begin{array}{ccccccc}
I& 0 & 0 & ... &0 \\
B_1 &I & 0 &... & 0\\
B_2& B_1& I &... & 0\\
...&...&... &...& ...\\
B_{i-1} & B_{i-2} &B_{i-3}  & ... & I \end{array}
\right].
\end{align*}
and satisfies $ Y^T X Y = X$, or equivalently

\begin{eqnarray*}
 B_1^T A - A B_1 &=& 0 \\
 B_2^TA - B_1^TAB_1+ A B_2 &=& 0\\
 \vdots \\
 B_{i-1}^T A - B_{i-2}^T A B_1+...+(-1)^{i-1} A B_{i-1} &=&0\\
 \end{eqnarray*}
 Using the fact that $A$ is $(-1)^{i-1}$-symmetric, these can be rewritten
 \begin{eqnarray*}
 (-1)^{i-1} (AB_1)^T - A B_1 &=& 0 \\
 (-1)^{i-1}(AB_2)^T + A B_2 &=&  B_1^TAB_1\\
 \vdots \\
  (-1)^{i-1} (AB_{i-1})^T +(-1)^{i-1} A B_{i-1} &=& B_{i-2}^T A B_1+... \\
 \end{eqnarray*}
The first equation gives $m (m  - (-1)^{i+1}) /2$ independent linear equations for entries of $B_1$.  Given a solution to this, the second equation gives $m (m  - (-1)^{i+2}) /2$ independent linear equations for entries of $B_2$, and so on. Altogether, the number of independent linear equations for $Y$ is $m^2(i-1)/2$ if $i$ is odd and $(m^2(i-1) + m)/2$ if $i$ is even. It follows that
 
$$ \beta = \begin{cases} q^{m^2(i-1)/2} & \text{ if $i$ is odd}\\
q^{(m^2(i-1) +m)/2} & \text{ if $i$ is even}. \end{cases}$$

\end{proof}

If $f \in \Phi_{2d}^\s$, then $\E := \F[t]/ f(t) \F[t]$ is a field extension of degree $2d$ over $\F$. There is a unique automorphism $\bar{\cdot}$ of $\E$ over $\F$ which sends the class of $t$ to it's multiplicative inverse.  A \emph{Hermitian form} on an $\E$-vector space $W$ is a non-degenerate $\F$-bilinear pairing $h: W \times W \rightarrow \E$ which is $\E$-linear in the first entry and satisfies $h(u,v) = \overline{h(v,u)}$.

\begin{prop}
Suppose that $f \in \Phi_{2d}^s$, and $V = V_i \cong \frac{\F[t]}{f(t)^i \F[t]} \otimes_\F \F^m$. Then $$F(t) =  q^{idm^2}  \prod_{j =1}^m \left(1 +(-1)^j q^{-dj} \right).$$
\end{prop}

\begin{proof}
By \cite[Thm. 3.3]{Mi}, the ndsbf on $V$ which are compatible with $t$ are classified as follows. If $\langle,\rangle$ is fixed by $t$ then the operatior $s(t) := f(t)/t^d$ is adjoint in the sense that $\langle s(t) v, w\rangle = \langle v, s(t)w\rangle$. The quotient space $ V/f(t) V$ is isomorphic to $\E^m$ where $\E = \F[t]/f(t)\F[t]$. The quotient $V/f(t) V$ admits one and only one Hermitian inner product $\cdot$ such that
$$\Tr_{\E/\F} ((v) \cdot (w)) = \langle s(t)^{i-1} v, w\rangle, $$
where $\Tr_{\E/\F}$ is the field trace. 

Thus $$F(t) = \alpha \beta$$ where $\alpha$ is the number of Hermitian inner products on $\E^m$ and $\beta$ is the number of $t$-compatible ndsbf associated to each Hermitian inner product. Since $\E^m$ only admits one Hermitian form up to change of basis (\cite{Mi} Example 1), we deduce using \cite{LS} that
$$\alpha =  \frac{|GL_m(\E)|}{ | U_m(\E)|} = q^{ d m^2}  \prod_{\gamma =1}^m(1 +(-1)^\gamma q^{-d\gamma}). $$ 

To calculate $\beta$, let $\langle,\rangle $ be given and choose an $\E$-basis $(v_1),...,(v_m) \in V/f(t)V$ which is orthonormal with respect to $\cdot$. There are $q^{2dm^2(i-1)}$ possible choices of representatives $v_1,...,v_m \in V$ and each such choice determines a basis $ \{ t^a s(t)^b v_k| a \in \{0,..., 2d-1\}, b \in \{0,...,i-1\}, k\in\{1,...,m\}\} $ for $V$. The representatives $v_1,...,v_m \in V$ can be chosen  so that $\langle,\rangle $ has standard form
$$  \langle t^a s(t)^b v_k , t^{a'} s(t)^{b'} v_l \rangle  =   \begin{cases} \Tr_{\E/\F}( (t^av_k) \cdot (t^{a'} v_l) ) & \text{ if $b+b' =i-1$ } \\   
 0 & \text{ ~~~~~~if $|a-a'| < d$ and $b+b' \neq i-1$ } 
 \end{cases} $$
with the remaining pairings determined uniquely from these using an induction argument. We call such a choice of representatives $v_1,...,v_m$ \emph{a standard lift} with respect to $\langle,\rangle $. Therefore $\beta$ equals $q^{2dm^2(i-1)}$ divided by the number of standard lifts for a given $\langle,\rangle $. 

Let $v_1,...,v_m$ be a choice of standard lift for $\langle,\rangle $. Any other choice of representative $v_1'$ for $(v_1)$ must be of the form $$v_1' = v_1 +\sum_{k=1}^m \sum_{j=1}^{i-1} a_{j,k}(t) s(t)^j v_k$$ where $a_{j,k}(t)$ is a polynomial of degree at most $2d-1$.  In order for $v_1'$ to extend to a standard lift, Milnor shows that $a_{1,1}(t) +a_{1,1}(t^{-1})$ must descend to zero in $\E =\F[t]/f(t) $. Since the kernel of the map $\E \rightarrow \E$ sending $e \mapsto e+ \bar{e}$ has order $q^d$, this means there are $q^d$ possible choices for $a_{1,1}(t)$. Similarly, once $a_{1,1}(t)$ is chosen, there are $q^d$ choices for $a_{2,1}(t)$ and so on. For $k >1$ the polynomials $a_{j,k}(t)$ can be chosen arbitrarily so there are $q^{2d}$ choices each.  Altogether there are $$q^{(d +2d(m-1))(i-1)} = q^{d(i-1)(2m-1)}$$ choices of $v_1'$ that extend to a standard lift.  The ndsbf $\langle,\rangle $ restricts to a non-degenerate form on $span\{ t^a s(t)^b v_1'\}$ and the remaining representatives $v_2',...,v_m'$ must be chosen from its orthogonal complement. Using induction we deduce that there are $ q^{ d(i-1)( (2m-1)+(2m-3) + ... + 1) }=  q^{d(i-1)m^2}$ choices of standard lift. It follows then that $\beta = q^{dm^2(i-1)}$ which concludes the proof. 
\end{proof}

\begin{proof}[Proof of Proposition \ref{t:Fformula}]
Assemble the results of this section to identify the leading order term in $F(c_{\bmu})$.
\end{proof}

\section{Characters of \texorpdfstring{$GL_n(\F_q)$}{Lg}}\label{Characters of}

In this section we recall the classification of irreducible characters of $GL_n(\F_q)$ due to Green \cite{Gr1} and prove Proposition \ref{bigtechnical} concerning sums of character values over conjugacy classes of fixed symmetric type. Our presentation borrows from \cite{DM, Gr1, M}.

It is helpful to first consider the symmetric group $S_n$, which can morally be thought of as $GL_n(\F_1)$. The conjugacy classes $c_\mu$ are classified by partitions $\mu \in \PP_n$ determined by the disjoint cycle decomposition. If $\mu = (1^{r_1} 2^{r_2} ....k^{r_k})$ then the stabilizer of any representative of $c_\mu$ has order
$$ z_\mu := r_1!\dots r_k! 1^{r_1}\dots k^{r_k}.$$
The irreducible characters $\chi_\lambda$ of $S_n$ are also classified by partitions $\lambda \in \PP_n$.  We write $$\chi_\mu^\lambda := \chi_\lambda(c_\mu).$$

Now suppose that $q$ is a prime power, and recall that $ M_n  :=   \F_{q^n}^\times$. When $n|m$, the norm map $\Nm_{m,n} : M_m \to M_n$ is defined $\Nm_{m,n}(x) =x \cdot x^{q^n}... \cdot x^{q^{(m-1)n}} = x^{ (q^m-1)/(q^n-1)}$.  Denote the character groups $L_n  := \Hom(M_n, \C^\times)$ and define the direct limit  $$L := \lim_{\to} L_n = \bigcup_n L_n,$$ using the dual maps $\Nm_{m,n}^*: L_n \hookrightarrow L_m$. These are injective and we treat them as subset inclusions.
If  $x \in M_n$ and $\gamma \in L_n$, we define
\begin{align*}
\langle \gamma, x \rangle_n := \gamma(x).
\end{align*}
Note that if $n |m$ and $\gamma \in L_n \subset L_m$ and $x \in M_m$, we have $\langle \gamma, x \rangle_m =  \langle \gamma, x^{ (q^m-1) /(q^n-1)} \rangle_n$. The Frobenius automorphism of $M$ induces one on $L$ and we write
$$\Theta = \bigcup_{d\geq 1} \Theta_d := L/Frob$$ where $\Theta_d$ is the set of orbits $\theta \subseteq L$ of order $d$. Observe
\begin{align*}
\theta = \{ \gamma, \gamma^q, \ldots, \gamma^{q^{d-1}} \} = \{ \gamma^q,...,\gamma^{q^d} \}
\end{align*}
for any $\gamma \in \theta \subseteq L_d \setminus (\cup_{i|d} L_i)$. We say  $\theta$ has \emph{degree} $d_{\theta} := |\theta|$.
Given a map $\Lambda : \Theta \to \PP$ of finite support define the \emph{norm}
\begin{align*}
\norm{\Lambda} := \sum_{\theta \in \Theta} d_{\theta} | \Lambda(\theta) |.
\end{align*}
The irreducible characters of $\GL_n(\F_q)$ are in one-to-one correspondence maps $\Lambda$ of norm $n$. We use notation $\Lambda = ( \theta_1^{\lambda_1} \theta_2^{\lambda_2} ...)$  to mean $\Lambda(\theta_i) = \lambda_i$. A character $\chi_\Lambda$ is called \emph{primary} if $\Lambda = (\theta^{\lambda})$ is supported on a single $\theta \in \Theta$.  
The \emph{type} of $\Lambda$ is the map $\tau : \PP \setminus \{ \emptyset \} \to \PP$ defined by
\begin{align*}
\tau(\lambda)& = (1^{m_{1,\lambda}} 2^{m_{2,\lambda}} ...), &m_{d,\lambda} = m_{d,\lambda}(\tau) &  := | \{ \theta \in \Theta_d \, : \, \Lambda(\theta) = \lambda \} | .
\end{align*}

We will later use a more refined notion of type valid if $q$ is odd. If $\theta \in \Theta_d$ and $\gamma \in \theta$, set 
$$ \langle \theta, -1\rangle_d := \langle \gamma, -1\rangle_d \in \{\pm 1\}$$
which is well-defined independently of the choice of $\gamma \in \theta$. For each $d \geq 1$ partition $\Theta_d = \Theta_d^+ \cup \Theta_d^-$ where 
$$\Theta_d^{\pm} := \{ \theta \in \Theta_d | \langle \theta, -1\rangle_d = \pm 1\}.$$
The \emph{signed type} of $\Lambda$ is the pair $\sigma^+, \sigma^-: \PP \setminus \{ \emptyset \} \to \PP$ defined by 
\begin{align}\label{signedtype}
\sigma^{\pm}(\lambda)& = (1^{m_{1,\lambda}^{\pm}} 2^{m_{2,\lambda}^{\pm}} ...), &m_{d,\lambda}^{\pm} &  := | \{ \theta \in \Theta_d^{\pm} \, : \, \Lambda(\theta) = \lambda \} | .
\end{align}

Our main technical result in this section is the following. 

\begin{prop}\label{bigtechnical}
Given an irreducible character $\chi_{\Lambda}$ and a symmetric type $\eta$, there is an equality
\begin{equation}\label{sumup}
 h_{\Lambda,\eta}(q)  = \sum_{ \bmu \in \eta}  \chi_\Lambda (c_{\bmu})
 \end{equation}
where $h_{\Lambda,\eta}(y) \in \C [y]$ is a polynomial of degree no greater than \footnote{We define the degree bound in terms of a representative $\bmu \in \eta$ instead of $\eta$ directly since this is more convenient for later application.}
\begin{equation}\label{degreebound}
n(\bmu(t+1))+ n(\bmu(t-1)) +  \sum_{f \in \supp'(\bmu)} d_f \left( n(\bmu(f)) + \tfrac{1}{2} \right),
\end{equation}
whose coefficients are uniformly bounded by a constant that depends only on $\eta$ and the type of $\Lambda$. 
\end{prop}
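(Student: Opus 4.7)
The plan is to reduce the evaluation of $\chi_\Lambda(c_\bmu)$ to Green's explicit character formula (as in Macdonald IV.6) and then to swap the sum over $\bmu \in \eta$ past Green's product expansion, reducing the problem to a handful of character sums over Frobenius orbits that can be controlled by orthogonality of multiplicative characters of $\F_{q^N}^\times$.

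First I would recall Green's formula, which writes $\chi_\Lambda(c_\bmu)$ as a product of factors indexed by pairs $(\theta,f) \in \supp\Lambda \times \supp\bmu$. Each factor is built out of a Green (equivalently Hall--Littlewood) polynomial $Q^{\Lambda(\theta)}_{\bmu(f)}$ evaluated at an appropriate power of $q$, times a ``coupling'' factor of the shape $\langle \gamma, x\rangle$ for $\gamma \in \theta$ and $x \in f$, which records the compatibility of the character's orbit in $L$ with the class's orbit in $M$. The key input is that the Green polynomial $Q^\lambda_\mu(t)$ is a polynomial in $t$ of degree $n_\mu$ with integer coefficients depending only on $(\lambda,\mu)$; therefore $Q^{\Lambda(\theta)}_{\bmu(f)}(q^{d_f})$ is a polynomial in $q$ of degree $d_f n_{\bmu(f)}$, uniformly in $q$.

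Next, I fix the symmetric type $\eta$. The slots $\bmu(t\pm 1) = \eta_\pm$ are already determined by $\eta$, so they contribute the pure polynomial piece of degree $n_{\bmu(t+1)} + n_{\bmu(t-1)}$. What remains is to range over all assignments, with prescribed multiplicities $m_{d,\lambda}^s$ and $m_{d,\lambda}^p$, of partitions $\lambda$ to orbits in $\Phi_{2d}^s$ and to pairs in $\Phi_d^p$. Since the only $f$-dependence in Green's product is through the coupling factor $\langle \gamma, x_f\rangle$, the $\bmu$-sum decouples into a product over the combinatorial slots in $\eta_s$ and $\eta_p$ of ``orbit sums'' of the form
\begin{equation*}
\sum_{f \in \Phi_{2d}^s} \prod_i \langle \gamma_i, x_f\rangle \qquad \text{and} \qquad \sum_{\{f,f^*\} \in \Phi_d^p} \prod_i \langle \gamma_i, x_f\rangle,
\end{equation*}
where the $\gamma_i$'s are drawn from orbits $\theta_i \in \supp\Lambda$ dictated by the coupling. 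Unfolding each such orbit sum through the Frobenius action converts it into an honest multiplicative character sum on $\F_{q^{2d}}^\times$ (respectively $\F_{q^d}^\times$), which is a polynomial in $q$ by orthogonality, of degree at most $\tfrac{1}{2} d_f |\bmu(f)|$ — this is the source of the $\tfrac{1}{2}$ slack in (\ref{degreebound}) for $f \in \supp'(\bmu)$, and also explains why no such slack appears for $f = t \pm 1$, where the orbit is already a singleton.

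Assembling the three ingredients, $\sum_{\bmu \in \eta} \chi_\Lambda(c_\bmu)$ becomes a finite $\Z$-linear combination of products of Green polynomials at $q^{d_f}$ and the above character-sum polynomials. The number of distinct coupling patterns between $\supp\Lambda$ and the slots of $\eta$ depends only on the type of $\Lambda$ and on $\eta$, not on $q$, and each contributing integer coefficient is bounded in terms of the same combinatorial data. This yields the polynomial $f_{\Lambda,\eta}(q)$ together with the uniform bound on coefficients, while the degree bound (\ref{degreebound}) falls out by adding the contributions from the Green polynomials and the orbit sums.

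The main obstacle will be the orbit-sum analysis. A character $\gamma \in \theta$ of degree $d_\theta$ paired against an orbit $f \in \Phi^s_{2d}$ (or a pair in $\Phi^p_d$) factors through the norm map to $\F_{q^{\mathrm{lcm}(d_\theta, d_f)}}^\times$, and the self-duality constraint $f = f^*$ in the symmetric case interacts nontrivially with the Frobenius action on $L$ (and with the involution $\theta \leftrightarrow \theta^{-1}$, which dictates when the orthogonality kills a given term). Verifying that the surviving contributions are genuine polynomials in $q$ of the claimed half-integer-shifted degree — rather than merely bounded — and keeping the bookkeeping of coupling patterns explicit enough that the degree and coefficient bounds are uniform in $q$, is where the real work lies.
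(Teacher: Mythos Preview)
Your overall strategy --- Green's formula, then orthogonality of multiplicative characters on the orbit sums --- is the same as the paper's, but the step where you say the $\bmu$-sum ``decouples into a product over the combinatorial slots'' hides the main difficulty. A map $\bmu$ of symmetric type $\eta$ assigns \emph{distinct} elements of $\Phi$ (or distinct pairs $\{f,f^*\}$) to the slots, so summing over $\bmu \in \eta$ is a sum over injective assignments, not over all tuples. The product you write down corresponds to summing over all tuples, which over-counts whenever two slots land on the same orbit. The difference is not merely lower-order: without an argument you do not know that the injective sum is a polynomial in $q$ at all, since the individual $\chi_\Lambda(c_\bmu)$ depend on which roots of unity lie in the chosen orbits, not just on $q$. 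The paper handles this by parametrizing $\bmu \in \eta$ via injective equivariant maps $\zeta: I \hookrightarrow M$ from a fixed index set, and then applying M\"obius inversion over the lattice of $\rho$- and $\iota$-compatible partitions of $I$ to trade the injectivity constraint for an alternating sum of genuine products. Only after that step do the factors become honest character sums over $M_{|c|}$ or $M_{|c|}^s$ to which orthogonality applies.

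A smaller point: your degree bound $\tfrac{1}{2} d_f |\bmu(f)|$ for the orbit-sum contribution is too generous and would not recover~(\ref{degreebound}). Each $f \in \supp'(\bmu)$ contributes a single orbit (or half of a pair) to sum over, and the relevant set $M_{d_f}^s$ or $M_{d_f}$ has size $\approx q^{d_f/2}$ or $q^{d_f}$ respectively, giving $\tfrac{1}{2} d_f$ per support element regardless of $|\bmu(f)|$. The partition $\bmu(f)$ affects only which Green polynomial $Q^{\bmu(f)}_{\brho(f)}(q^{d_f})$ is attached, whose degree is $d_f n_{\bmu(f)}$; that is where the $n_{\bmu(f)}$ in~(\ref{degreebound}) comes from.
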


\begin{rmk}\label{qnif}
The point of Proposition \ref{bigtechnical} is that for large $q$ the sum (\ref{sumup}) is dominated by the leading order term. This will permit us to calculate multiplicities by taking limits $q \rightarrow \infty$.
\end{rmk}

\begin{rmk}
The informal explanation for (\ref{degreebound}) is that each $\chi_\Lambda (c_{\bmu})$ equals a polynomial expression in $q$ of degree bounded by $\sum  d_f n(\bmu(f))$ (see Lemma \ref{p:degbound}), while the number of terms in the sum is a polynomial of degree $\tfrac{1}{2}  \sum_{f \in \supp'(\bmu)} d_f$. The full proof involves an inclusion-exclusion argument.
\end{rmk}

We begin with some preliminaries before stating Green's character formula for $\chi_\Lambda (c_{\bmu})$. Let $\theta \in \Theta$, let $f \in \Phi$, let $x \in M$ be a root of $f$, and let $e$ be a positive integer such that $d_f  | d_{\theta} e$. Define 
\begin{eqnarray*}
S^{\theta}_e(f) = S^{\theta}_e(x)  & := &  \sum_{ \gamma \in \theta}  \langle \gamma, x \rangle _{d_{\theta} e}
\end{eqnarray*}
(the sum is independent of the choice of root $x$ of $f$). If $\lambda =(\lambda_1 \geq \lambda_2 \geq ...)  \in \PP$ is a partition such that every block of $ d_{\theta} \cdot \lambda$ is divisible by $d_f$, then define
$$  S_{\lambda}^{\theta}(f) = S_{\lambda}^{\theta}(x)  :=  \prod_{i=1}^{\ell(\lambda)} S^{\theta}_{\lambda_i}(f). $$ 
For example, one can check the identities
\begin{align}\label{shiftids}
S^{\theta}_{\lambda}(t-1)& = d^{\ell(\lambda)}_{\theta} & S^{\theta}_{\lambda}(t+1) &= d^{\ell(\lambda)}_\theta \langle\theta,-1\rangle_{d_\theta}^{|\lambda|}.
\end{align}

\begin{lem}\label{S sum lemma}
Let $g = gcd( \lambda_1,...,\lambda_{\ell(\lambda)})$.  Then $S_{\lambda}^{\theta}$ determines a character
$$  M_{d_{\theta} g} \rightarrow \C^\times$$ 
of degree $d_{\theta} \ell(\lambda)$ which is constant along Frobenius orbits. 
\end{lem}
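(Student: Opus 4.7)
The plan is to show that $S_\lambda^\theta$, after restriction to the subgroup $M_{d_\theta g} \subseteq M_{d_\theta \lambda_i}$ for each $i$, is manifestly a character (in the sense of a $\C$-valued class function arising from a genuine representation), compute its value at the identity, and verify it is invariant under $Frob$.

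First, I would record the underlying divisibility picture. By definition $g \mid \lambda_i$ for every $i$, hence $d_\theta g \mid d_\theta \lambda_i$, which gives a tower of field inclusions $\F_{q^{d_\theta}} \subseteq \F_{q^{d_\theta g}} \subseteq \F_{q^{d_\theta \lambda_i}}$ and corresponding multiplicative subgroups $M_{d_\theta} \subseteq M_{d_\theta g} \subseteq M_{d_\theta \lambda_i}$. Each $\gamma \in \theta \subseteq L_{d_\theta}$ is a homomorphism $\gamma : M_{d_\theta} \to \C^\times$ of degree one, and via the direct-limit embeddings $L_{d_\theta} \hookrightarrow L_{d_\theta \lambda_i}$ (pullback along $\Nm$) it extends to a degree-one character of $M_{d_\theta \lambda_i}$, whose value on $x$ is exactly $\langle \gamma, x\rangle_{d_\theta \lambda_i}$. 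The compatibility $\langle \gamma,x\rangle_{d_\theta \lambda_i} = \langle \gamma, x^{(q^{d_\theta \lambda_i}-1)/(q^{d_\theta g}-1)}\rangle_{d_\theta g}$ stated in the preamble shows that the restriction of this character to $M_{d_\theta g}$ is again a degree-one character of $M_{d_\theta g}$, so $S_{\lambda_i}^\theta\big|_{M_{d_\theta g}} = \sum_{\gamma \in \theta} \gamma\big|_{M_{d_\theta g}}$ is a genuine character of $M_{d_\theta g}$.

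Next I would assemble the factors. The pointwise product of characters of a finite abelian group is again a character (it represents the tensor product of the underlying one-dimensional representations, i.e.\ another collection of degree-one characters summed together), so
\[
S_\lambda^\theta\big|_{M_{d_\theta g}} \;=\; \prod_{i=1}^{\ell(\lambda)} S_{\lambda_i}^\theta\big|_{M_{d_\theta g}}
\]
is a character of $M_{d_\theta g}$. Evaluating at $x = 1$ gives $S_{\lambda_i}^\theta(1) = |\theta| = d_\theta$ for each $i$, and I read off the degree as $\prod_{i=1}^{\ell(\lambda)} d_\theta$; in the form the lemma is written this is the number to be matched against the asserted $d_\theta \ell(\lambda)$, and I would flag this as the one delicate point (the natural computation produces $d_\theta^{\,\ell(\lambda)}$, which would be consistent with the stated degree only when $\ell(\lambda) \le 1$; I would either reconcile this via a reinterpretation of ``degree'' or record it as a typo in the statement and prove the assertion with the correct value).

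For the final clause, I would show $Frob$-invariance directly. The Frobenius $\phi \in \Gal(\overline{\F}_q/\F_q)$ acts on $L$ by $(\phi \cdot \gamma)(x) = \gamma(x^q)$ (equivalently $\gamma(x)^q$ up to the identification of $\C^\times$ values), and $\theta \subseteq L$ is by definition a $\phi$-orbit. Hence the reindexing $\gamma \mapsto \phi^{-1} \cdot \gamma$ of the sum yields
\[
S_{\lambda_i}^\theta(x^q) \;=\; \sum_{\gamma \in \theta} \gamma(x^q) \;=\; \sum_{\gamma \in \theta} (\phi\cdot \gamma)(x) \;=\; \sum_{\gamma' \in \theta} \gamma'(x) \;=\; S_{\lambda_i}^\theta(x),
\]
so each factor, and therefore the product $S_\lambda^\theta$, is constant on $Frob$-orbits in $M_{d_\theta g}$. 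The main obstacle in writing the full proof is purely bookkeeping: ensuring that the norm-compatibility of the embeddings $L_n \hookrightarrow L_m$ is used consistently so that restricting ``first to $M_{d_\theta \lambda_i}$ then to $M_{d_\theta g}$'' agrees with the direct evaluation formula, and settling the degree discrepancy noted above.
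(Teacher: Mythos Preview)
Your argument is essentially the same as the paper's: show each factor $S^\theta_{\lambda_i}$ restricts to a sum of $d_\theta$ linear characters of $M_{d_\theta g}$ (the paper does this by writing $S^\theta_e(x)=\sum_{i=1}^{d_\theta}\langle\gamma,x^{q^i(q^{d_\theta e}-1)/(q^{d_\theta}-1)}\rangle_{d_\theta}$ for $g\mid e$), then take the product and note Frobenius invariance. Your flag on the degree is well taken: the product of $\ell(\lambda)$ characters each of degree $d_\theta$ has degree $d_\theta^{\ell(\lambda)}$, not $d_\theta\ell(\lambda)$; the paper's proof asserts the latter without justification and appears to be a typo, but it is harmless for the application in Proposition~\ref{bigtechnical}, where the degree is used only as a uniform bound (independent of $q$) on the multiplicity of the trivial character.
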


\begin{proof}
If $\gamma \in \theta$ and $x \in  M_{d_{\theta} g}$ and $g$ divides $e$, then
$$ S^{\theta}_e(x) =  \sum_{i=1}^{d_{\theta}} \langle \gamma^{q^i}, x \rangle _{d_{\theta}e} = \sum_{i=1}^{d_{\theta}} \langle \gamma, x^{ q^i (q^{d_{\theta}e} -1)/ (q^{d_{\theta}} -1)}\rangle _{d_{\theta}}$$
which is a Frobenius invariant character of degree $d_\theta$. Since $M_{d_{\theta} g}$ is an abelian group, the product
$$  S_{\lambda}^{\theta}(x) =  \prod_{i=1}^{\ell(\lambda)} S_{\lambda_i}^{\theta}(x)$$
is a Frobenius invariant character of degree $d_\theta \ell(\lambda)$.
\end{proof}

Given partitions $\lambda, \mu \in \PP_n$, the Green polynomial $ Q^{\lambda}_{\mu}(q) \in \Z[q]$  was defined by Green \cite[\S 4]{Gr1}.

\begin{lem}\label{Leading order Q}
Let $\mu$ and $\lambda$ be partitions of $n$ with $\mu=  \{1^{r_1}2^{r_2}...\}$. The Green polynomial $Q^{\lambda}_{\mu}(q)$ has degree less than or equal to $n(\lambda)$. If equal, then the leading coefficient is $(-1)^{\ell_{ev}(\mu)}c^{\lambda}_{(1),...,(1), (1^2),...,(1^2),..}$ where $(1^i)$ occurs $r_i$ times in the subscript. Here the $c^{\lambda}_{(1),...,(1), (1^2),...,(1^2),..}$ is a Littlewood-Richardson coefficient. 
\end{lem}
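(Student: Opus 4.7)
The plan is to establish the claim via the symmetric-function description of Green polynomials developed in Macdonald \cite[Ch. III]{M}.  In this framework, $Q^\lambda_\mu(q)$ arises as an entry in the transition matrix between the power sum basis $\{p_\mu\}$ and a suitably normalized Hall--Littlewood basis.  Concretely, one has a transition identity of the shape
\begin{equation*}
p_\mu(x) \;=\; \sum_{\lambda \vdash n} Q^\lambda_\mu(q) \, \widetilde P_\lambda(x;q),
\end{equation*}
where $\widetilde P_\lambda(x;q)$ is a rescaling of the Hall--Littlewood polynomial $P_\lambda(x;q^{-1})$ tailored so that the Schur function $s_\lambda$ appears as its leading-order term as $q \to \infty$.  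I would begin by recalling this identity and the specialization $\widetilde P_\lambda \rightsquigarrow s_\lambda$ in the appropriate limit.

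The degree bound $\deg Q^\lambda_\mu(q) \leq n_\lambda$ then falls out by matching powers of $q$: the left-hand side $p_\mu(x)$ is $q$-independent, while the basis elements $\widetilde P_\lambda(x;q)$ have controlled $q$-degree with respect to the Schur basis.  To extract the leading coefficient, I would pass to the limit in which $\widetilde P_\lambda(x;q)$ degenerates to $s_\lambda(x)$; the coefficient $[q^{n_\lambda}]Q^\lambda_\mu(q)$ is then identified with the coefficient of $s_\lambda$ in (a twist of) $p_\mu$.

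To convert this Schur coefficient into the asserted Littlewood--Richardson number $c^\lambda_{(1),\ldots,(1^2),\ldots}$, I would apply the fundamental involution $\omega$, using $\omega(s_\lambda) = s_{\lambda'}$, $\omega(e_k) = h_k$, and $\omega(p_n) = (-1)^{n-1} p_n$.  The product $e_\mu = \prod_i e_i^{r_i} = \prod_i s_{(1^i)}^{r_i}$ expands in the Schur basis precisely as $e_\mu = \sum_\lambda c^\lambda_{(1),\ldots,(1^2),\ldots} s_\lambda$, with the subscript listing $r_i$ copies of $(1^i)$ (this is essentially the definition of the Littlewood--Richardson coefficient as a multiplicity in a product of Schur functions).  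Tracing $\omega$ through the Schur expansion of $p_\mu$ and cross-identifying with the $e_\mu$ expansion yields the desired coefficient.  The overall sign $(-1)^{\ell_{ev}(\mu)}$ emerges from $\omega(p_\mu) = (-1)^{|\mu|-\ell(\mu)} p_\mu$, combined with the elementary parity identity $|\mu| - \ell(\mu) = \sum_i (\mu_i - 1) \equiv \ell_{ev}(\mu) \pmod 2$ (odd parts contribute an even amount to the sum).

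The main obstacle is reconciling normalization conventions between Green \cite{Gr1}, Macdonald \cite{M}, and the present paper --- various factors of $q^{n_\lambda}$ and signs differ across sources.  One must pin down the precise normalization under which the asserted degree bound $n_\lambda$ is exactly saturated, and under which the leading-coefficient limit yields the Schur expansion indexed by $\lambda$ rather than $\lambda'$.  Sanity checks in low-rank examples ($\mu = (1^n)$, where both sides reduce to $f^\lambda$; or $\mu = (n)$, where both sides vanish unless $\lambda = (1^n)$) will confirm that the conventions line up.  Once the normalization is fixed, the argument is structural: match top-degree terms in the symmetric function identity and invoke $\omega$.
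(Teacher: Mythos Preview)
Your route is genuinely different from the paper's: rather than working through the Macdonald transition identity and a $q\to\infty$ degeneration of Hall--Littlewood functions to Schur functions, the paper invokes Green's original formula
\[
Q^\lambda_\mu(q)=\sum_{\rho_1,\ldots}\,g^\lambda_{\rho_1,\ldots}(q)\,\prod_j k(\rho_j,q),
\]
bounds the degree of each factor, and reads off the top term via Hall's theorem on the leading coefficient of Hall polynomials. Your symmetric-function argument is more conceptual; the paper's is a direct degree count on an explicit sum.

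That said, your final step---using $\omega$ to turn the Schur coefficient of $p_\mu$ into the asserted Littlewood--Richardson number---cannot succeed as written. Carried out honestly, your limiting argument identifies the coefficient of $q^{n_\lambda}$ in $Q^\lambda_\mu(q)$ with the symmetric-group character value $\chi^\lambda_\mu$ (this is exactly Macdonald III~(7.8)--(7.11)). But $\chi^\lambda_\mu$ is \emph{not} in general equal to $(-1)^{\ell_{ev}(\mu)}c^\lambda_{(1),\ldots,(1^2),\ldots}$. Your own second sanity check exposes this: for $\mu=(n)$ you have $Q^\lambda_{(n)}(q)=k(\lambda,q)$, which has degree $n_\lambda$ with leading coefficient $(-1)^{\ell(\lambda)-1}$ whenever $\lambda$ is a hook, not only for $\lambda=(1^n)$; yet $c^\lambda_{(1^n)}=\delta_{\lambda,(1^n)}$. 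The involution $\omega$ only gives the symmetry $\chi^{\lambda'}_\mu=\mathrm{sgn}(\mu)\chi^\lambda_\mu$; it does not convert $p_\mu$ into $e_\mu$.

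The reason you hit this wall is that the lemma, read literally, overclaims. The paper's own proof asserts that $\deg k(\rho,q)=n_\rho$ forces $\rho=(1^{|\rho|})$, but in fact equality holds for every hook $\rho$; summing the contributions of all hook choices reproduces (via $p_m=\sum_k(-1)^k s_{(m-k,1^k)}$) precisely $\chi^\lambda_\mu$. So your approach and the paper's, done carefully, lead to the same place, and that place is $\chi^\lambda_\mu$. This does not affect the paper, since the only application (in Lemma~\ref{evenodd}) is at $\lambda=(1^n)$, where $\chi^{(1^n)}_\mu=(-1)^{\ell_{ev}(\mu)}$ and the two formulas agree.
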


\begin{proof}
The degree bound is stated in \cite[Lemma 4.3]{Gr1}, but it is convenient to recall the proof. By definition
$$Q^{\lambda}_{\mu}(q) = \sum g^{\lambda}_{\rho_1, \rho_2,...}(q) k(\rho_1,q) k(\rho_2, q)...$$ summed over sequences $(\rho_i)$ where $\rho_1,..., \rho_{r_1}$ are partitions of $1$,  $\rho_{r_1+1},..., \rho_{r_1+r_2}$ are partitions of $2$ and so on. Here  $$k(\rho_i, q) = (1-q)...(1-q^{l(\rho_i)-1})$$
which implies $\deg( k(\rho_i, q)) \leq n(\rho_i)$ with equality if only if $\rho_i = (1^{|\rho_i|})$. The $g^{\lambda}_{\rho_1, \rho_2,...}(q)$ are Hall polynomials and by Hall's Theorem \cite[Thm. 4]{Gr1} 
\begin{equation}\label{gfirstord}
 g^{\lambda}_{\rho_1, \rho_2,...}(q)  = c^{\lambda}_{\rho_1, \rho_2,...}q^{n(\lambda)-n(\rho_1)-...} + \text{lower order terms}
\end{equation}
from which the result follows.   
\end{proof}

\begin{prop}[Green's Character Formula \cite{Gr1}]\label{GCF}
If $\Lambda = ( \theta^{\lambda})$ is a primary character and $c_{\bmu}$ is a conjugacy class then
\begin{equation}\label{primarycharacterformula}
 \chi_{\Lambda} (c_{\bmu}) =  (-1)^{n- |\lambda| }  \sum_{\pi \vdash |\lambda|}\chi^{\lambda}_{\pi} \sum_{\substack{ \brho: \Phi \rightarrow \PP,\\  |\brho(f)|= |\bmu(f)|,  \forall f \in \Phi,\\  Part(\brho) = d_{\theta} \cdot \pi} }  S^{\theta}(\brho) Q^{\bmu}_{\brho}.
\end{equation}

where 
\begin{eqnarray}
Part(\brho) &:=&  \cup_{f \in \Phi} d_f \cdot \brho(f)\\
  Q^{\bmu}_{\brho}& :=& \prod_{f \in \Phi} \frac{1}{z_{\brho(f)}}  Q_{\brho(f)}^{\bmu(f)}(q^{d_f}) \label{superQ}\\
  S^{\theta}( \brho)& :=&  \prod_{f \in \Phi} S^{\theta}_{ \frac{d_f}{d_{\theta}} \cdot \brho(f)}(f).
\end{eqnarray}

For a general irreducible character $\Lambda = ( \theta_1^{\lambda_1} ..., \theta_k^{\lambda_k})$ and conjugacy class $c_{\bmu}$ we have
\begin{equation}\label{parainduction}
 \chi_{\Lambda} (c_{\bmu}) := \sum_{|\bmu_i| = |\lambda_i|d(\theta_i) }  g^{\bmu}_{\bmu_1,...,\bmu_k}(q)\prod_{i=1}^k  \chi_{\theta_i^{\lambda_i}}(c_{\bmu_i})
\end{equation}
where 
\begin{equation}\label{gfact}
g^{\bmu }_{\bmu_1,...,\bmu_k}(q) =  \prod_{f \in \Phi} g^{\bmu(f)}_{\bmu_1(f),...,\bmu_k(f)}(q^{d_f})
\end{equation}
counts flags in $\F_q^n$ which are invariant under a fixed representative of $c_{\bmu}$ and whose subquotients are isomorphic to $(c_{\bmu_1},...,c_{\bmu_k})$.
\end{prop}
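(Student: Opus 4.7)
The proof proceeds in two stages, matching the two formulas in the statement. My plan is to follow Green's original approach \cite{Gr1} in the reorganized form presented by Macdonald \cite{M}.

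First I would establish the primary case (\ref{primarycharacterformula}). By Green's construction, for $\theta \in \Theta_{d_\theta}$ and a partition $\lambda$ with $|\lambda| d_\theta = n$, the irreducible character $\chi_{\theta^\lambda}$ arises as a (signed) Harish-Chandra induction of a cuspidal datum attached to $\theta$ from a torus of type $d_\theta \cdot \pi$, weighted by the symmetric-group character $\chi^\lambda_\pi$; the overall sign $(-1)^{n-|\lambda|}$ matches the sign of the corresponding Deligne--Lusztig virtual character. Applied to a class $c_{\bmu}$, this presentation decomposes the character value as $\sum_{\pi \vdash |\lambda|} \chi^\lambda_\pi \, I_\pi(c_{\bmu})$ where $I_\pi$ sums contributions from pairs (compatible torus character, $c_{\bmu}$-invariant flag). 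To compute $I_\pi(c_{\bmu})$, I would parametrize the torus contributions by maps $\brho : \Phi \to \PP$ such that $\mathrm{Part}(\brho) = d_\theta \cdot \pi$, which is exactly the matching needed for a torus of the given type to occur in the primary block indexed by $f$. Summing the norm characters $\langle \gamma, -\rangle$ over each $f$-block produces the factor $S^\theta(\brho)$ (via Lemma \ref{S sum lemma}), while counting invariant flags and passing to alternating sums in the Weyl group produces the Green polynomials $Q^{\bmu(f)}_{\brho(f)}(q^{d_f})$, which assemble to $Q^{\bmu}_{\brho}$ as in (\ref{superQ}); the factor $1/z_{\brho(f)}$ corrects for the automorphisms of repeated blocks.

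Second I would deduce (\ref{parainduction}) from the primary case by Harish-Chandra parabolic induction. A general $\Lambda = (\theta_1^{\lambda_1}, \ldots, \theta_k^{\lambda_k})$ is obtained by inducing the exterior product $\chi_{\theta_1^{\lambda_1}} \boxtimes \cdots \boxtimes \chi_{\theta_k^{\lambda_k}}$ from a block-diagonal Levi subgroup $L = GL_{n_1} \times \cdots \times GL_{n_k}$ with $n_i = d_{\theta_i} |\lambda_i|$. The Mackey-style formula for an induced character at $c_{\bmu}$ then expresses $\chi_\Lambda(c_{\bmu})$ as a sum over tuples of $L$-classes $(c_{\bmu_1},\ldots,c_{\bmu_k})$ that arise as subquotients of $c_{\bmu}$-invariant flags, each weighted by the number of such flags. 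That flag count is by definition the Hall polynomial $g^{\bmu}_{\bmu_1, \ldots, \bmu_k}(q)$, and the factorization (\ref{gfact}) is forced by the primary decomposition (\ref{e:primarydecomp}), since any $c_{\bmu}$-invariant flag splits as a direct sum of flags, one on each primary component $V_f$, and the count on $V_f$ is precisely $g^{\bmu(f)}_{\bmu_1(f), \ldots, \bmu_k(f)}(q^{d_f})$.

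The hard part will be Stage 1: the bookkeeping linking the torus-character sums $S^\theta(\brho)$ with the Hall-polynomial sums that collapse to Green polynomials is intricate, and occupies most of \cite[\S\S 4--6]{Gr1}. For the purposes of the present paper I would simply invoke Green's theorem in the form stated there, or equivalently \cite[IV.6]{M}, where the identity is derived cleanly as a transition formula between Hall--Littlewood and power-sum bases of the symmetric function algebra. Re-deriving it here would not serve the geometric application, so I would present the proposition as Green's result and proceed to exploit it in the subsequent computation of the multiplicities $a_\chi$.
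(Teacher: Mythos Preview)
Your approach matches the paper's: this proposition is not proved in the paper at all but is simply attributed to Green \cite{Gr1}, and your final paragraph correctly anticipates this, proposing to invoke Green's theorem (or \cite[IV.6]{M}) rather than re-derive it. The sketch you give of the underlying mechanism is accurate and more detailed than anything the paper provides, but since the paper treats the result as a black box, your conclusion to do the same is exactly right.
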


\begin{rmk}\label{basicformob}
Except for the factor $S^{\theta}( \brho)$ appearing in (\ref{primarycharacterformula}), Green's formula for  $\chi_{\Lambda}$ depends only on the type of $\bmu$.
\end{rmk}

\begin{lem} \label{p:degbound}
Let $c_{\bmu}$ be a conjugacy class of $GL_n(\F_q)$ and let $\chi_{\Lambda}$ be an irreducible character. Green's formula (\ref{parainduction}) expresses $\chi_{\Lambda}(c_{\bmu} )$ as a polynomial in $q$ of degree bounded above by
\begin{align} \label{e:degbound}
\sum_{f \in \supp \bmu} d_f  n(\bmu(f)).
\end{align} 
\end{lem}

\begin{proof}
From (\ref{superQ}) and Lemma \ref{Leading order Q} it follows that 
$$   \deg( Q^{\bmu}_{\brho}(q))  \leq \sum_{f \in \supp \bmu} d_f  n(\bmu(f))  $$
which by (\ref{primarycharacterformula}) takes care of the primary case. For the general case, note from (\ref{gfirstord}) and  (\ref{gfact}) that 
\begin{equation}
\deg (g^{\bmu }_{\bmu_1,...,\bmu_k}(q))  \leq \sum_{f} d_f \left(n(\bmu(f)) - n(\bmu_1(f)) - ... - n(\bmu_k(f))\right).
\end{equation} 
Applying this to (\ref{parainduction}) completes the proof.
\end{proof}

\begin{proof}[Proof of Proposition \ref{bigtechnical}]

We use an inclusion-exclusion argument to reduce to cyclic character sums (compare \cite[\S 3.3]{HRV}). Let $\bmu:  M  \rightarrow \PP$ be a Frobenius invariant map representing the symmetric type $\eta$. Introduce a finite set $I$ and an injective map  $\zeta_0: I \hookrightarrow M$  onto the support of $\bmu$. Define permutations $\rho$ and $\iota$ of $I$ such that  
\begin{align}\label{firstalign}
\zeta_0 \circ \rho & = Frob \circ \zeta_0  &  \zeta_0 \circ \iota  &= inv \circ \zeta_0
\end{align}
where $inv(x) = x^{-1}$.  Denote by $I/\rho$ the set of $\rho$-cycles. Consider the set of equivariant maps $$(I, M)_{eq} := \{ \zeta: I \rightarrow M | \zeta \circ \rho  = Frob \circ \zeta, \zeta \circ \iota  = inv \circ \zeta \}$$  and denote $(I, M)_{eq}'$ the subset of injective maps. Then there is a natural $z_{\eta}$-to-one surjective map from  $(I, M)_{eq}' $ to the set of maps of symmetric type $\eta$, where
$$ z_{\eta} = \prod_{d\geq 1} \prod_{\lambda \in \PP \setminus \emptyset} 2^{m_{d,\lambda}^p} d^{m_{d,\lambda}^s+m_{d,\lambda}^p} (m_{d,\lambda}^s!) (m_{d,\lambda}^p!) $$
where $m_{d,\lambda}^s, m_{d,\lambda}^p \in  \Z_{\geq 0}$ are as in \S \ref{s:symmetrictypes}. It follows from Proposition \ref{GCF} and Lemma \ref{p:degbound}  that
$$  \sum_{ \bmu \in \eta}  \chi_\Lambda (c_{\bmu}) = \frac{1}{z_{\eta}} \sum_{  \zeta \in (I, M)_{eq}' } \varphi(\zeta)$$  
where $\varphi(\zeta)$ is the value of $\chi_\Lambda$ on the conjugacy class determined by $\zeta$. By Remark \ref{basicformob}, $ \varphi( \zeta)$ is a sum of terms of the form $$ f(q) \prod_{ c \in (I/\rho)}  S_{\lambda(c)}^{\theta(c)}(\zeta(c))$$ with the $f(q) \in \C[q]$ are polynomials depending only on the type $\eta$, of degree no greater than $\sum_{f \in \supp \bmu} d_f  n(\bmu(f))$, and where $\theta: I/\rho \rightarrow \Theta$, and $\lambda:  I/\rho \rightarrow \PP$ satisfy  $$\sum_{ c \in (I/\rho)}  |\lambda(c)| d_{\theta(c)} = n.$$ To complete the proof it remains to show that the sums
$$ \sum_{\zeta \in  (I, M)_{eq}' }  \prod_{ c \in (I/\rho)}  S_{\lambda(c)}^{\theta(c)}(\zeta(c))  $$
equal polynomial expressions in $q$ with degree no greater than $\frac{1}{2} \sum_{f \in \supp'(\bmu)} d_f$ which have uniformly bounded coefficients. 

A partition of $I$ describes a surjective map $I \twoheadrightarrow J$ onto the set of blocks.  Let  $\Pi(I)_{eq}$ be the poset of partitions of $I$ for which $\theta$ is constant on blocks and both $\iota$ and $\rho$ permute blocks. Let $(J, M)_{eq}\subseteq (I, M)_{eq}$ be the subset of maps which are constant on blocks. Then by Moebius inversion we obtain the equality
\begin{equation}\label{firstmob}
  \sum_{  \zeta \in (I, M)_{eq}' } \prod_{c \in I/\rho}  S_{\lambda(c)}^{\theta(c)}(\zeta(c))  = \sum_{J \in \Pi(I)_{eq} } \mu_{eq}(J)  S(J)
 \end{equation}
where $\mu$ is the Mobius function for the poset $\Pi(I)_{eq}$ and
$$S(J) := \sum_{ \zeta \in (J, M)_{eq}} \prod_{c \in J/\rho }  S_{\lambda(c) }^{\theta(c)}(\zeta(c))$$
where $\lambda(c) =  \cup_{c'\in c} \lambda(c')$.
Next we prove that the $S(J)$ are polynomial functions of $q$ with uniformly bounded coefficients. Define $M_{n}^s := \{ x \in M_n  |~x^{-1} = x^{q^i} \text{ for some $i$} \}$.  This has cardinality  
\begin{equation}\label{Mscard}
 | M_{n}^s | = \begin{cases} 2  & \text{ if $n$ is odd} \\ q^{n/2}+1 & \text{ if $n$ is even} . \end{cases}
 \end{equation}
Then
\begin{equation}\label{SJsum}
 S(J) = \prod_{\substack{c \in J/\rho\\ \iota(c) =c }} \Big( \sum_{ x \in M_{|c|}^s }  S^{\theta(c)}_{\lambda(c)}(x) \Big) \times \prod_{ \substack{ \{c, \iota(c)\} \\ \iota(c) \neq c}}   \Big( \sum_{ x \in M_{|c|} }  S^{\theta(c)}_{\lambda(c)}(x) S^{\theta(\iota(c))}_{\lambda(\iota(c))}(x^{-1}) \Big) .
 \end{equation}

By Lemma \ref{S sum lemma}, $S^{\theta(c)}_{\lambda(c)}$ restricts to a character of degree $d_{\theta(c)} \ell(\lambda(c))$ on $M_{|c|}^s$, so
$$   \sum_{ x \in M_{|c|}^s }  S^{\theta(c)}_{\lambda(c)}(x)   = C  | M_{|c|}^s|  = \begin{cases} 2 C   & \text{ if $|c|$ is odd} \\ C(q^{|c|/2}+1) & \text{ if $|c|$ is even} . \end{cases}$$
where $0 \leq C \leq d_{\theta(c)} \ell(\lambda(c))$ is the multiplicity of the trivial character in $S^{\theta(c)}_{\lambda(c)}$. On the other hand

\begin{eqnarray}
 \sum_{ x \in M_{|c|} }  S^{\theta(c)}_{\lambda(c)}(x) S^{\theta(\iota(c))}_{\lambda(\iota(c))}(x^{-1}) &=&  \sum_{ x \in M_{|c|} }  S^{\theta(c)}_{\lambda(c)}(x) \overline{ S^{\theta(\iota(c))}_{\lambda(\iota(c))}}(x) \label{psum} \\
 & = & (q^{|c|}-1) \langle  S^{\theta(c)}_{\lambda(c)} , S^{\theta(\iota(c))}_{\lambda(\iota(c))} \rangle_{M_{|c|}} \nonumber \\
 & =& C' (q^{|c|}-1) \nonumber
\end{eqnarray}
where $0 \leq C' \leq d_{\theta(c)} d_{\theta(\iota(c))} \ell(\lambda(c)) \ell(\lambda(\iota(c)))$ by Lemma \ref{S sum lemma}. Taken together, this implies that $S(J)$ is polynomial whose coefficients are bounded by  $\prod_{c \in J/\rho} d_{\theta(c)} \ell(\lambda(c))$ and whose is degree at most $$\frac{1}{2} |\{ j \in J| \rho(j)\neq j \text{ or } \iota(j)\neq j\}|$$which is bounded above by 
$$  \frac{1}{2} |\{ i \in I| \rho(i)\neq i \text{ or } \iota(i)\neq i\}| = \frac{1}{2} \sum_{f \in \supp'(\bmu)} d_f .$$ 
\end{proof}

\section{Leading coefficients of $h_{\Lambda, \eta}$ when $\eta$ is of restricted symmetric type}

It will be important in the next section to calculate the leading coefficient of $h_{\Lambda,\eta}(y)$ some special cases of $\eta$. 

We say a symmetric map $\bmu: \Phi \rightarrow \PP$ has \emph{restricted symmetric type} if $\bmu(f) =1^{|\bmu(f)|}$ for $f \in \{ t \pm 1\}$  and  $\bmu(f) = 1^1$ for $f \in \supp'(\bmu) := \supp \bmu \setminus \{ t \pm 1 \}$. 

A restricted symmetric type  can be encoded as a tuple $\tau =(n_+, n_-, \tau_s, \tau_p)$, where $n_+$, $n_- \in \Z_{\geq 0}$ and $\tau_s, \tau_p  \in \PP$ and $ n_+ + n_{-} + 2|\tau_s| + 2|\tau_p| = n$ . This $\tau$ determines the symmetric type $\eta = (\eta_+, \eta_-, \eta_s, \eta_p)$ 
 \begin{align*}
\eta_{\pm} &:= (1^{n_{\pm}}) & \eta_s(\lambda) & := \begin{cases} 2 \cdot \tau_s & \lambda = (1) \\ \emptyset & \text{otherwise} \end{cases} & \eta_p(\lambda) & := \begin{cases} 2 \tau_p & \lambda = (1) \\ \emptyset & \text{otherwise} .\end{cases}
\end{align*}

By (\ref{degreebound}), when $\tau$ is of restricted symmetric type, the degree of $h_{\Lambda,\tau}$ is bounded by 
$$\deg h_{\Lambda,\tau}(y) \leq \tfrac{1}{2} \left((n_+-1)^2 + (n_--1)^2+n-2\right).$$
and we will call the coefficient of this term the \emph{leading coefficient} of $h_{\Lambda,\tau}$, even if it is zero.

Given a restricted symmetric type $\tau =(n_+, n_-, \tau_s, \tau_p)$ and $d \geq 1$, we may construct another restricted symmetric type $d \cdot \tau = (dn_+, dn_-, d\cdot \tau_s, d \tau_p)$ and say that $d$ \emph{divides} $d \cdot \tau$.

\begin{lem}\label{evenodd}
Let $\Lambda = (\theta^{\lambda} )$ be primary, where $\theta \in \Theta_d$ and $v:= |\lambda|$. Let $\tau = (n_+, n_-, \tau_s, \tau_p) $ be a restricted symmetric type of norm $n=  d v$. Then the leading coefficient of $h_{\Lambda, \tau}$ is zero, unless $d$ divides $(n_+, n_-, \tau_s,\tau_p)$. 

Suppose that $(n_+, n_-,\tau_s,\tau_p) = d \cdot (\tilde{n}_+, \tilde{n}_-, \tilde{\tau}_s,\tilde{\tau}_p)$ and let $\tilde{\rho} := 2 \cdot \tilde{\tau}_s \cup 2 \tilde{\tau}_p$. Then the leading coefficient of $h_{\Lambda, \tau}$ is equal to 
\begin{equation}\label{leadingcoeff}
 \langle\theta,-1\rangle_d^{ \tilde{n}_-}  \frac{ (-1)^{l(\tau_s)}}{z_{2\tilde{\tau}_s} z_{2\tilde{\tau}_p} }  \sum_{\pi \vdash v} sgn(\pi) \chi_{\pi}^{\lambda}   \sum_{\substack{ \tilde{\rho}_+ \cup \tilde{\rho}_- \cup \tilde{\rho} = \pi\\  \tilde{\rho}_+ \vdash \tilde{n}_+\\ \tilde{\rho}_- \vdash \tilde{n}_-} } \frac{ 1  }{z_{\tilde{\rho}_+} z_{\tilde{\rho}_-} }.
 \end{equation}
\end{lem}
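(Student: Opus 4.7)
The plan is to apply Lemma \ref{l:onlyrestricted} to reduce the problem to computing the leading constant term of the character sum $\sum_{\bmu \in \tau} \chi_\Lambda(c_\bmu)$. Expanding each character value by Green's Character Formula (Proposition \ref{GCF}) in its primary form yields
\begin{equation*}
\sum_{\bmu \in \tau} \chi_\Lambda(c_\bmu) = (-1)^{n-v} \sum_{\pi \vdash v} \chi^\lambda_\pi \sum_{\bmu \in \tau} \sum_\brho S^\theta(\brho)\, Q^\bmu_\brho,
\end{equation*}
and the leading behavior will be extracted by combining Lemma \ref{Leading order Q} with a character-orthogonality and M\"obius-inversion argument paralleling the end of the proof of Proposition \ref{bigtechnical}.

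For restricted $\bmu$ the requirement $|\brho(f)|=|\bmu(f)|$ forces $\brho(f) = (1)$ for every $f \in \supp'(\bmu)$, so only $\brho(t\mp 1) = \brho_\pm \in \PP_{n_\pm}$ vary. From the encoding of $\tau$ the multi-set of degrees $\{d_f : f \in \supp'(\bmu)\}$ equals $2 \cdot \tau_s \cup 2 \tau_p$, so the condition $Part(\brho) = d \cdot \pi$ reads $\brho_+ \cup \brho_- \cup (2\cdot\tau_s \cup 2\tau_p) = d \cdot \pi$. A leading-order analysis shows that the only configurations contributing a nonzero degree-zero constant occur when $(n_+, n_-, \tau_s, \tau_p) = d \cdot (\tilde n_+, \tilde n_-, \tilde\tau_s, \tilde\tau_p)$: otherwise either no valid $\brho$ exists or the leading contributions cancel, giving strictly negative degree. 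When the divisibility holds, write $\brho_\pm = d \cdot \tilde\rho_\pm$ and decompose $\pi = \tilde\rho_+ \cup \tilde\rho_- \cup \tilde\rho$ with $\tilde\rho := 2\cdot\tilde\tau_s \cup 2\tilde\tau_p$, reindexing the sum accordingly.

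Next I extract the leading coefficient factor by factor. Lemma \ref{Leading order Q} gives $Q^{(1)}_{(1)}(q^{d_f}) = 1$ for $f \in \supp'(\bmu)$, and at $f = t \mp 1$ it yields leading coefficient $(-1)^{\ell_{ev}(\brho_\pm)}$, using the Pieri-rule identity $c^{1^n}_{(1^{k_1}),\dots,(1^{k_r})}=1$ whenever $k_1 + \cdots + k_r = n$. The identities \eqref{shiftids} evaluate the $S^\theta$ factors at $f = t \mp 1$ as $d^{\ell(\brho_+)}$ and $d^{\ell(\brho_-)} \langle\theta, -1\rangle_d^{\tilde n_-}$. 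The remaining piece $\sum_{\bmu \in \tau} \prod_{f \in \supp'(\bmu)} S^\theta_{d_f/d}(f)$ is handled by the character-sum argument of Proposition \ref{bigtechnical}: each self-dual slot of half-degree $a$ (part $a$ of $\tau_s$) contributes a leading factor from $|M^s_{2a}| = q^a + 1$ together with a constant $d$ reflecting $|\theta|=d$ via orthogonality (which forces $d | a$), and each paired slot of degree $a$ (part $a$ of $\tau_p$) contributes a leading factor from $|M_a| = q^a - 1$ with a similar constant. The accumulated $q$-power exactly cancels the one from $F(\tau)/|Z(\tau)|$, as guaranteed by Lemma \ref{l:onlyrestricted}, leaving a constant.

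The chief obstacle will be the combinatorial sign-tracking in the assembly. One must reconcile the overall $(-1)^{n-v}$ from Green's formula with the signs $(-1)^{\ell_{ev}(\brho_\pm)} = (-1)^{\ell_{ev}(d \cdot \tilde\rho_\pm)}$ from Lemma \ref{Leading order Q}, whose explicit form depends on the parity of $d$. The key combinatorial identity is $\ell_{ev}(\tilde\rho) = \ell(\tilde\tau_s) + 2\ell_{ev}(\tilde\tau_p)$, which combined with $\ell(\tilde\tau_s) = \ell(\tau_s)$ shows that $sgn(\pi) = (-1)^{\ell_{ev}(\tilde\rho_+) + \ell_{ev}(\tilde\rho_-)} \cdot (-1)^{\ell(\tau_s)}$. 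This is precisely the mechanism by which the prefactor $(-1)^{\ell(\tau_s)}$ in \eqref{leadingcoeff} is extracted. A parallel bookkeeping of the normalizations $1/z_{\brho(f)}$ from Green's formula, the factor $1/z_\eta$ from the sum over $\bmu \in \tau$, and the combinatorial constants from the self-dual and paired slot sums together produce the denominator $z_{2\tilde\tau_s}\, z_{2\tilde\tau_p}\, z_{\tilde\rho_+}\, z_{\tilde\rho_-}$ of \eqref{leadingcoeff}, completing the identification.
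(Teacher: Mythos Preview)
Your proposal is correct and follows essentially the same route as the paper: apply Green's formula in primary form, note that restrictedness forces $\brho(f)=(1)$ on $\supp'(\bmu)$, use Lemma~\ref{Leading order Q} for the leading coefficient of the Green polynomials at $t\pm 1$, and evaluate $\sum_{\bmu\in\tau}\prod_{f\in\supp'(\bmu)} S^\theta_{d_f/d}(f)$ by the character-sum computations from the proof of Proposition~\ref{bigtechnical}. The divisibility condition $d\mid \tau_s$ arises exactly as you say, from the self-dual slot sums $\sum_{x\in M_{2a}^s} S^\theta_{2a/d}(x)$, which vanish unless $d\mid a$.

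One small caution on the sign bookkeeping: the identity you quote, $sgn(\pi)=(-1)^{\ell_{ev}(\tilde\rho_+)+\ell_{ev}(\tilde\rho_-)}\cdot(-1)^{\ell(\tau_s)}$, is a correct statement about the decomposition $\pi=\tilde\rho_+\cup\tilde\rho_-\cup\tilde\rho$, but what actually appears from Lemma~\ref{Leading order Q} is $(-1)^{\ell_{ev}(\rho_\pm)}=(-1)^{\ell_{ev}(d\cdot\tilde\rho_\pm)}$, together with the global $(-1)^{n-v}$. To close the argument you still need the parity-of-$d$ case check showing $(-1)^{n-v}(-1)^{\ell_{ev}(d\cdot\tilde\rho_+)+\ell_{ev}(d\cdot\tilde\rho_-)}=(-1)^{\ell_{ev}(\tilde\rho_+)+\ell_{ev}(\tilde\rho_-)}$; you acknowledge this dependence but do not carry it out. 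The paper does this explicitly (separating $d$ odd, where $n-v$ is even and $\ell_{ev}(d\cdot\pi)=\ell_{ev}(\pi)$, from $d$ even, where $\ell_{ev}(d\cdot\pi)\equiv\ell_{ev}(\pi)+v\bmod 2$). Likewise, your claim that the $z$-factors assemble to $z_{2\tilde\tau_s}z_{2\tilde\tau_p}z_{\tilde\rho_+}z_{\tilde\rho_-}$ is true but requires cancelling the various powers of $d$ (from $z_{d\cdot\mu}=d^{\ell(\mu)}z_\mu$, from \eqref{shiftids}, and from the slot sums); this is routine but should be made explicit.
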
 

\begin{proof}

Let $c_{\bmu}$ be a conjugacy class having a restricted symmetric type $(n_+, n_-, \tau_s, \tau_p)$ and let $\rho = \left(2 \cdot \tau_s \right) \cup 2\tau_p$. Applying Green's character formula (Proposition \ref{GCF}) and identity (\ref{shiftids}) we have
\begin{eqnarray*}\label{primfurm}
 \chi_{\Lambda}(c_{\bmu}) &=&  (-1)^{n- v} \langle\theta, -1\rangle_d^{n_- /d}  \Big( \prod_{f  \in \supp'(\bmu)}  S^{\theta}_{d_f/d}(f) \Big) \\
 &&  \times    \sum_{\pi \vdash v}  \chi_{\pi}^{\lambda} \sum_{\substack{ \rho_+ \cup \rho_{-} \cup \rho = d \cdot \pi \\  \rho_+ \vdash n_+\\ \rho_- \vdash n_-}  } \frac{d^{\ell(\rho_+)+ \ell(\rho_{-})}}{z_{\rho_+} z_{\rho_-} }Q^{(1^{n_+})}_{\rho_+}(q) Q^{(1^{n_-}) }_{\rho_-}(q)    .
 \end{eqnarray*}
This expression is trivial unless $d$ divides $(n_+, n_-, 2 \cdot \tau_s, \tau_p)$, which we assume for the rest of the proof. By Lemma \ref{Leading order Q},  the Green polynomial $$Q^{\{1^n\}}_{\rho}(q) = (-1)^{\ell_{ev}(\rho)}q^{\binom{n}{2}} + \text{lower order terms} .$$
because $c^{1^n}_{\{1\},...\{1^2\},...} =1$. Therefore the leading coefficient of $\chi_\Lambda (c_\mu )$ as a polynomial in $q$ equals
\begin{equation}\label{intermediateform}
(-1)^{n- v} \langle\theta, -1\rangle_d^{n_-/d }  \Big( \prod_{  f \in \supp'(\bmu)}  S^{\theta}_{d_f/d}(f) \Big) 
   \sum_{\pi \vdash v}  \chi_{\pi}^{\lambda} \sum_{\substack{ \rho_+ \cup \rho_{-} \cup \rho = d \cdot \pi \\  \rho_+ \vdash n_+\\ \rho_- \vdash n_-}  }  \frac{d^{\ell(\rho_+ \cup\rho_{-})}}{z_{\rho_+} z_{\rho_-} }  (-1)^{\ell_{ev}(\rho_+\cup \rho_-)} .
 \end{equation}
The only part of (\ref{intermediateform}) that varies with $\bmu \in \tau$  is  $S^{\theta}_{d_f/d}(f) $. Comparing with (\ref{firstmob}) and (\ref{SJsum}), if $\tau_s = (1^{r_1} 2^{r_2}...)$ and $\tau_p = (1^{s_1} 2^{s_2}....)$, then
\begin{eqnarray}\label{charsumfo}
 \sum_{\bmu \in \tau} \prod_{ f \in \supp'(\bmu)}  S^{\theta}_{d_f/d}(f) & =&  \frac{1}{z_{2 \tau_s} z_{2 \tau_p}}  
\prod_{i \geq 1}  \left( \sum_{x \in M_{2i}^s} S^{\theta}_{2i/d}(x)  \right)^{r_i} \left( \sum_{ x \in M_i} S^{\theta}_{i/d}(x) S^{\theta}_{i/d}(x^{-1})\right)^{s_i}\\
&& + \text{lower order terms in $q$. \nonumber}
\end{eqnarray}
Following (\ref{psum}) we have 
\begin{eqnarray*}
\sum_{ x \in M_i} S^{\theta}_{i/d}(x) S^{\theta}_{i/d}(x^{-1}) &=&(q^i -1)   \langle S^{\theta}_{i/d}, S^{\theta}_{i/d} \rangle_{M_i} \\
&=& (q^i-1) d.
\end{eqnarray*}
Furthermore, $M_{2i}^s = \{x \in M | x^{q^i +1} =1\}$ by (\ref{Mscard}), so we have
\begin{eqnarray*}
 \sum_{x \in M_{2i}^s} S^{\theta}_{2i/d}(x)  &=&  \sum_{x \in M_{2i}^s} \sum_{ \gamma \in \theta}   \langle \gamma, x\rangle_{2i}\\
 &=& \sum_{ \gamma \in \theta}   \sum_{x \in M_{2i}^s}  \langle \gamma, x^{(q^{2i} -1)/(q^d-1) }\rangle_{d}\\
 &=&  \begin{cases} d  (q^i+1) & \text{ if $d | i$} \\ 0 & \text{ otherwise}\end{cases}
\end{eqnarray*}
since
$$  \frac{(q^{2i} -1)/(q^d-1)}{q^i+1}  = \frac{q^{i} -1}{q^d-1} \in \Z  \Longleftrightarrow \frac{i}{d} \in \Z.$$

 The leading coefficient of $h_{\Lambda, \tau}$ is therefore zero unless $d$ divides $(n_+, n_-, \tau_s,\tau_p)$. Letting $(n_+, n_-, \tau_s,\tau_p) = d \cdot (\tilde{n}_+, \tilde{n}_-, \tilde{\tau}_s,\tilde{\tau}_p)$, the leading coefficient equals
\begin{eqnarray*}
(-1)^{n- v} \langle\theta, -1\rangle_d^{n_-/d}   \frac{d^{\ell(\tau_s \cup \tau_p)}}{z_{2 \tau_s} z_{2 \tau_p}} 
   \sum_{\pi \vdash v}  \chi_{\pi}^{\lambda} \sum_{\substack{ \rho_+ \cup \rho_{-} \cup \rho = d \cdot \pi \\  \rho_+ \vdash n_+\\ \rho_- \vdash n_-}  } \frac{d^{\ell(\rho_+ \cup\rho_{-})}}{z_{\rho_+} z_{\rho_-} }  (-1)^{\ell_{ev}(\rho_+\cup \rho_-)} \\
   = (-1)^{n-v} \langle\theta, -1\rangle_d^{ \tilde{n}_-}  \frac{ 1}{z_{2\tilde{\tau}_s} z_{2\tilde{\tau}_p} }  \sum_{\pi \vdash v} \chi_{\pi}^{\lambda}   \sum_{\substack{ \tilde{\rho}_+ \cup \tilde{\rho}_- \cup \tilde{\rho} = \pi\\  \tilde{\rho}_+ \vdash \tilde{n}_+\\ \tilde{\rho}_- \vdash \tilde{n}_-} }  \frac{(-1)^{\ell_{ev}(\rho_+) + \ell_{ev}(\rho_-) }  }{z_{\tilde{\rho}_+} z_{\tilde{\rho}_-} }.
 \end{eqnarray*}
For the sign, note $$\ell_{ev}(\rho_+) + \ell_{ev}(\rho_-) \equiv \ell_{ev}(d \cdot \pi) + \ell(\tilde{\tau}_s)~mod~2.$$  
 If $d$ is odd, then $n-v$ is even and  $\ell_{ev}(d\cdot \pi) = \ell_{ev}(\pi)$.  If $d$ is even, then $n$ is even and $\ell_{ev}(d \cdot \pi) \equiv \ell_{ev}(\pi) +v~mod~2$. Both cases yield (\ref{leadingcoeff}), using $ sgn(\pi) = (-1)^{\ell_{ev}(\pi)}$.
\end{proof}

\begin{prop}\label{generalepsilon0}
If $\Lambda = (\theta_1^{\lambda_1}, ..., \theta_k^{\lambda_k})$ and $\tau$ is restricted symmetric type then  

\begin{equation}\label{hforgeneral}
h_{\Lambda,\tau}(y)  =_{l.o.t}  \sum_{ \tau_1 \cup ... \cup \tau_k = \tau}  g_{\tau_1,...,\tau_k}^\tau(y) \prod_{i=1}^k h_{(\theta_i^{\lambda_i}),\tau_i}(y) .
\end{equation}
where $=_{l.o.t}$ means equal modulo terms of less than leading order.
\end{prop}

\begin{proof}

Set $n_i:= \norm{\theta_i^{\lambda_i}} =d_{\theta_i} |\lambda_i|$ and $n:= \norm{\Lambda} = n_1+...+n_k$. Recall (\ref{parainduction}) that for $\Lambda = ( \theta_1^{\lambda_1} ,..., \theta_k^{\lambda_k})$ we have
\begin{equation*}
\chi_{\Lambda} (c) := \sum_{(c_1,...,c_k)}  g^{c}_{c_1,...,c_k}(q)\prod_{i=1}^k  \chi_{\theta_i^{\lambda_i}}(c_i).
\end{equation*}
where $c_i$ are conjugacy classes of rank $n_i$. If $c = c_{\bmu}$ has restricted symmetric type $\tau = (n_+, n_-, \tau_s, \tau_p)$, then it is principal and cannot be produced using non-trivial extensions. Therefore $g^{c_{\bmu}}_{c_1,...,c_k}(q) = 0$ unless $c_{\bmu} \cong c_1 \oplus ... \oplus c_k$ and we may write
\begin{equation}\label{easyrf}
\chi_{\Lambda} (c_{\bmu}) := \sum_{ \bmu_1 \cup ... \cup \bmu_k = \bmu}  g^{\bmu}_{\bmu_1,...,\bmu_k}(q)\prod_{i=1}^k  \chi_{\theta_i^{\lambda_i}}(c_{\bmu_i}).
\end{equation}
By (\ref{gfirstord}) and (\ref{gfact}), $g^{\bmu}_{\bmu_1,...,\bmu_k}(q)$ is monic of degree  
\begin{equation}\label{gdeg}
\deg(g^{\bmu}_{\bmu_1,...,\bmu_k}(q)):= n(1^{n_+})+n(1^{n_-}) - \sum_{i=1}^k \left(n(1^{n_{i,+}}) + n(1^{n_{i,-}})\right)
\end{equation}
where $n_{i,\pm}$ is the multiplicity of the eigenvalue $\pm 1$ in $c_{\bmu_i}$. 

Summing (\ref{easyrf}) over $\bmu$ gives
\begin{equation*}
h_{\Lambda, \tau}(q) = \sum_{ \bmu \in \tau} \chi_{\Lambda} (c_{\bmu})  =    \sum_{ \bmu_1 \cup ... \cup \bmu_k = \bmu \in \tau  } g^{\bmu}_{\bmu_1,...,\bmu_k}(q) \prod_{i=1}^k  \chi_{\theta_i^{\lambda_i}}(c_{\bmu_i}).
\end{equation*}
This sum includes tuples $(\bmu_1,...,\bmu_k)$ that send polynomials in a symmetric pair $\{f,f^*\}$  to different blocks $\theta_i^{\lambda_i},  \theta_j^{\lambda_j}$ where $\theta_i \neq \theta_j$. But the contribution of these tuples to the calculation of the leading term involves a factor equal to the character sum
\begin{eqnarray*}
\sum_{ x \in M_{d_f}} S^{\theta_i}_{d_f/d_{\theta_i}}(x) S^{\theta_j}_{d_f/d_{\theta_j}}(x^{-1}) &=& \sum_{x \in M_{d_f}} \sum_{\gamma \in \theta_i, \gamma' \in \theta_j} \langle \gamma, x\rangle_{d_f} \overline{\langle \gamma',x\rangle}_{d_f} \\
&=& 0
\end{eqnarray*}
(compare (\ref{charsumfo})). To leading order, it is therefore enough sum over $(\bmu_1,...,\bmu_k)$ for which each $\bmu_i$ has restricted symmetric type $\tau_i$ of norm $\norm{\tau_i}=n_i$.  Since $g^{\bmu}_{\bmu_1,...,\bmu_k}(q)$ depends only on type, we can write $g^{\tau}_{\tau_1,...,\tau_k}(q) = g^{\bmu}_{\bmu_1,...,\bmu_k}(q)$ when $\bmu \in \tau, \bmu_i \in \tau_i$. Then
\begin{eqnarray*}
 \sum_{ \bmu_1 \cup ... \cup \bmu_k = \bmu \in \tau  }g^{\bmu}_{\bmu_1,...,\bmu_k}(q) \prod_{i=1}^k  \chi_{\theta_i^{\lambda_i}}(c_{\bmu_i}) &=_{l.o.t.}& 
  \sum_{ \tau_1 \cup \cdots \cup \tau_k = \tau}   g^{\tau}_{\tau_1,...,\tau_k}(q) \sum_{\substack{ \bmu_i \in\tau_i,\\ i\in \{1,...,k\}}} \left(  \prod_{i=1}^k  \chi_{\theta_i^{\lambda_i}}(c_{\bmu_i}) \right) \\
& =_{l.o.t.}&   \sum_{ \tau_1 \cup \cdots \cup \tau_k = \tau}   g^{\tau}_{\tau_1,...,\tau_k}(q) \prod_{i=1}^k  \left( \sum_{\bmu_i \in\tau_i} \chi_{\theta_i^{\lambda_i}}(c_{\bmu_i}) \right)\\
& =& \sum_{ \tau_1 \cup \cdots \cup \tau_k = \tau}   g^{\tau}_{\tau_1,...,\tau_k}(q) \prod_{i=1}^k  h_{(\theta_i^{\lambda_i}), \tau_i}(q) \end{eqnarray*}
where we get further lower order terms accounting for when $supp'(\bmu_i) \cap supp'(\bmu_j) \neq \emptyset$ for $i \neq j$.  
\end{proof}

\section{Irreducible character decomposition of \texorpdfstring{$F$}{Lg}}\label{FouTrans}

Consider the character $F : GL_n(\F_q) \to \Z_{\geq 0} \subseteq \C$ of Section \ref{s:General formula for F}.  In this section, we compute the multiplicity in $F$ of irreducible characters of $GL_n(\F_q)$, for $char(q) \gg 1$. That is, we determine the coefficients $\sa_\Lambda$ in the decomposition
\begin{align*}
F = \sum_{\substack{\Lambda: \Theta \rightarrow \PP \\  \norm{\Lambda}=n  }} \sa_\Lambda \chi_\Lambda.
\end{align*}

The conjugacy classes of $GL_n(\F_q)$ are parametrized by functions $\bmu : \Phi \to \PP$  of degree $n$ (see \S \ref{s:symmetrictypes}).  By \eqref{orthonormalbasisdec}  we have
\begin{equation}\label{innerprod0}
\sa_\Lambda = \sum_{\norm{\bmu} =n }  \frac{F(c_{\bmu})  }{|Z(c_{\bmu})|} \chi_{\Lambda}( c_{\bmu}) .
\end{equation}
Since $F$ is supported on conjugacy classes of symmetric type, substituting (\ref{formforcontr}), (\ref{polyforF}), (\ref{sumup}), we deduce that 
\begin{equation}\label{innerprod}
\sa_\Lambda  = \sum_{\norm{\eta} =n }  \frac{b_{\eta}(q)}{a_\eta(q)}h_{\Lambda,\eta}(q)
\end{equation}
where the sum is over symmetric types $\eta$ with $\norm{\eta} = n$ and $a_{\eta}(y), b_{\eta}(y), h_{\Lambda,\eta}(y) \in \C[y]$ are certain polynomials introduced earlier. 

Recall that a symmetric map $\bmu: \Phi \rightarrow \PP$ has \emph{restricted symmetric type} if $\bmu(f) =1^{|\bmu(f)|}$ for $f \in \{ t \pm 1\}$  and  $\bmu(f) = 1^1$ for $f \in \supp'(\bmu) := \supp \bmu \setminus \{ t \pm 1 \}$. The following lemma implies that only restricted symmetric types contribute to (\ref{innerprod}) in the limit $q \rightarrow \infty$.

\begin{lem} \label{l:onlyrestricted}
If $\eta$ be a symmetric type with $\norm{\eta} = n$, then
\begin{equation}\label{sigmad}
 \frac{b_{\eta}(y)}{a_\eta(y)}h_{\Lambda,\eta}(y)
\end{equation}
is rational function in $y$ of non-positive degree and has degree zero only if $\eta$ is restricted. If $\eta$ is restricted, the constant term of \eqref{sigmad} agrees with the leading coefficient of $h_{\Lambda,\eta}(y)$.
\end{lem}

\begin{proof}
From (\ref{formforcontr}), Proposition \ref{t:Fformula}, and Proposition \ref{bigtechnical}, each of $a_\eta(y)$, $b_{\eta}(y)$, and $h_{\Lambda,\eta}(y)$ are equal to polynomials in $y$, so \eqref{sigmad} is a rational function in $y$. Both $a_\eta(y)$ and $b_{\eta}(y)$ are monic, so the leading coefficient is same as $h_{\Lambda,\eta}(y)$. 

The degrees of the polynomials satisfy (in)equalities
\begin{align}
\deg a_\eta(y) & =  2\sum_{f \in \supp(\bmu)} d_f \left( n(\bmu(f))+\tfrac{1}{2} |\bmu(f)|\right),  \label{zdeg} \\
\deg b_{\eta}(y) & =   \tfrac{1}{2}\left(\ell_{odd}(\bmu(t+1))+ \ell_{odd}(\bmu(t-1))\right)  +  \sum_{f \in \supp(\bmu)}  d_f \left( n(\bmu(f)) + \tfrac{1}{2} |\bmu(f)| \right), \label{fdeg} \\
\deg h_{\Lambda,\eta}(y) & \leq   n(\bmu(t+1)) + n(\bmu(t-1)) +  \sum_{f \in \supp'(\bmu)} d_f \left( n(\bmu(f)) + \tfrac{1}{2} \right).
 \end{align}
Therefore the degree of (\ref{sigmad}) bounded above by 
\begin{equation}\label{sumupdegrees}
\frac{1}{2} \left( \sum_{f = t \pm 1}  \left( \ell_{\odd}(\bmu(f)) - | \bmu(f)| \right) +  \sum_{f \in \supp'(\bmu)}  d_f \left( 1 - | \bmu(f)| \right) \right).
 \end{equation}
 which is a sum  of non-positive terms, hence non-positive. The upperbound (\ref{sumupdegrees}) is zero if and only if $\bmu(f) =1^{|\bmu(f)|}$ for  $f \in \{t \pm 1\}$  and  $\bmu(f) = 1^1$ for $f \in \supp'(\bmu)$. 
\end{proof}

For $\pi \in \PP$, define $C_\pi^+, C_\pi^- \in \C$ by 
\begin{align}\label{formulaCCpm}
C_\pi^+ & :=  \sum_{\substack{ \rho_+, \rho_-, \tau_s, \tau_p \in \PP\\ \rho_+ \cup \rho_- \cup 2 \cdot \tau_s \cup 2\tau_p   = \pi,\\ |\rho_-|\equiv 0~mod~2 }}  \frac{(-1)^{\ell(\tau_s)}}{z_{\rho_+} z_{\rho_-} z_{2\tau_s} z_{2\tau_p}}   &
C_\pi^- & :=  \sum_{\substack{ \rho_+, \rho_-, \tau_s, \tau_p \in \PP\\ \rho_+ \cup \rho_- \cup 2 \cdot \tau_s \cup 2 \tau_p  = \pi,\\ |\rho_-| \equiv 1~mod~2}} \frac{(-1)^{\ell(\tau_s)}}{z_{\rho_+} z_{\rho_-} z_{2\tau_s} z_{2\tau_p}}  .
\end{align}
and define
\begin{align}\label{formulaCD}
C_\pi & := C_{\pi}^+ + C_\pi^- & D_\pi & := C_{\pi}^+ - C_\pi^- .
\end{align}

\begin{prop}\label{primaryepsilon}
Let $\theta \in \Theta_d$  let $\lambda \vdash v = n/d$ be a partition and let $char(q) \gg 1$. Then for the primary character $\theta^{\lambda}$ we have
 \begin{eqnarray*}\label{short formula}
 \sa_{\theta^\lambda} &=& \begin{cases} \sa^+_{\lambda}:=\sum_{\pi \vdash v} sgn(\pi) C_\pi \chi_{\pi}^\lambda & \text{ if $\langle\theta,-1\rangle_d=1$} \\
 \sa^-_{\lambda} := \sum_{\pi \vdash v} sgn(\pi) D_\pi \chi_{\pi}^\lambda & \text{ if $\langle\theta, -1\rangle_d = -1$}. \end{cases}
 \end{eqnarray*}
\end{prop}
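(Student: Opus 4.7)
The plan is to compute $a_{\theta^\lambda}$ by extracting the constant term in $q$ of the right-hand side of (\ref{innerprod}) and then arguing that this constant term equals $a_{\theta^\lambda}$ on the nose for $q \gg 1$. Each summand of (\ref{innerprod}) is a rational function of $q$ of non-positive degree by Lemma \ref{l:onlyrestricted}, hence so is the total; but $a_{\theta^\lambda}$ is a non-negative integer for every $q$, and a rational function of $q$ of non-positive degree that is integer-valued for infinitely many $q$ must equal its limit (its constant term) for $q$ sufficiently large. Thus it suffices to compute the constant term.

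Only restricted symmetric types $\tau = (n_+, n_-, \tau_s, \tau_p)$ contribute, again by Lemma \ref{l:onlyrestricted}. By Lemma \ref{evenodd}, the contribution from such $\tau$ vanishes unless $d$ divides $n_+$, $n_-$, and every part of $\tau_s$ and $\tau_p$; writing $(n_+, n_-, \tau_s, \tau_p) = d\cdot(\tilde n_+, \tilde n_-, \tilde\tau_s, \tilde\tau_p)$ and using $\ell(\tau_s) = \ell(\tilde\tau_s)$, the constant term of the $\tau$-contribution is given by (\ref{leadingcoeff}). The admissible $\tau$ correspond bijectively to quadruples $(\tilde n_+, \tilde n_-, \tilde\tau_s, \tilde\tau_p)$ with $\tilde n_+ + \tilde n_- + 2|\tilde\tau_s| + 2|\tilde\tau_p| = v$. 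I would then interchange the order of summation, pulling $\sum_{\pi \vdash v} sgn(\pi)\, \chi_\pi^\lambda$ outside, so that the inner sum ranges over all partitions $\rho_+,\rho_-,\tau_s,\tau_p$ with $\rho_+ \cup \rho_- \cup 2\cdot\tau_s \cup 2\tau_p = \pi$, weighted by
$$\frac{\langle\theta,-1\rangle_d^{|\rho_-|}\,(-1)^{\ell(\tau_s)}}{z_{\rho_+}z_{\rho_-}z_{2\tau_s}z_{2\tau_p}}.$$
When $\langle\theta,-1\rangle_d = +1$ this inner sum is $C_\pi^+ + C_\pi^- = C_\pi$, yielding $a_\lambda^+$; when $\langle\theta,-1\rangle_d = -1$ the factor $(-1)^{|\rho_-|}$ splits the sum by the parity of $|\rho_-|$ to give $C_\pi^+ - C_\pi^- = D_\pi$, yielding $a_\lambda^-$, exactly matching the definitions in (\ref{formulaCCpm}) and (\ref{formulaCD}).

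The main obstacle is essentially careful bookkeeping: tracking that the exponent $\tilde n_-$ on $\langle\theta,-1\rangle_d$ in (\ref{leadingcoeff}) becomes $|\rho_-|$ after the reindexing, and verifying that the resulting parity grading matches the convention used to define $C_\pi^\pm$. The analytical piece -- upgrading the $q \to \infty$ asymptotics to an exact identity valid for $\mathrm{char}(q) \gg 1$ -- is immediate from the integrality of $a_{\theta^\lambda}$ combined with the degree bounds established in Lemma \ref{l:onlyrestricted}.
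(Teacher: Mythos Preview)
Your proof is correct and follows essentially the same route as the paper's: reduce to restricted symmetric types via Lemma \ref{l:onlyrestricted}, invoke Lemma \ref{evenodd} for the leading coefficient, reindex the sum over $(\tilde n_+,\tilde n_-,\tilde\tau_s,\tilde\tau_p)$ to pull out $\sum_{\pi\vdash v}\operatorname{sgn}(\pi)\chi_\pi^\lambda$, and then identify the inner sum as $C_\pi^+ + \langle\theta,-1\rangle_d\, C_\pi^-$. One small caution: your phrase ``a rational function of $q$'' is slightly loose, since the polynomial $f_{\Lambda,\eta}$ of Proposition \ref{bigtechnical} may depend on the particular $\theta$ and not just its type; what makes the limiting argument valid is the \emph{uniform} bound on coefficients established there, which you are implicitly invoking through Lemma \ref{l:onlyrestricted}.
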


\begin{proof}
By (\ref{innerprod}) we have
\begin{equation*}
\sa_{\theta^\lambda} = \sum_{\eta \text{ symmetric type}} \sum_{\norm{\eta} =n }  \frac{b_{\eta}(q)}{a_\eta(q)}h_{\Lambda,\eta}(q)
\end{equation*}
By Lemma \ref{l:onlyrestricted} we know that 
\begin{equation}\label{restsymmtypesum}
\sa_{\theta^\lambda}  + O(q^{-1}) = \sum_{\eta  \text{ restricted symmetric type} }  \sum_{\norm{\eta} =n }  \frac{b_{\eta}(q)}{a_\eta(q)}h_{\Lambda,\eta}(q).
\end{equation}
By Remark \ref{qnif} we know that constant terms must match for $char(q) \gg 1$. Applying Lemma \ref{evenodd}, and taking a limit $q \rightarrow \infty$ we have 
\begin{eqnarray*}
\sa_{\theta^\lambda}  &=& \sum_{\substack{ n_+, n_- \geq 0, \tau_s, \tau_p \in \PP\\  |n_+|+|n_-|+ 2|\tau_s|+2|\tau_p| = v }} \langle\theta,-1\rangle_d^{ n_-}  \frac{ (-1)^{l(\tau_s)}}{z_{2\tau_s} z_{2\tau_p} }  \sum_{\pi \vdash v} sgn(\pi) \chi_{\pi}^{\lambda}   \sum_{\substack{ \rho_+ \cup \rho_- \cup 2 \cdot \tau_s \cup 2\tau_p = \pi \\ |\rho_+| = n_+, |\rho_-| = n_-} } \frac{ 1  }{z_{\rho_+} z_{\rho_-} } \\
&=& \sum_{\pi \vdash v}  sgn(\pi)\chi_\pi^{\lambda} ( C_\pi^+ + \langle\theta, -1\rangle_d C_\pi^-) .
 \end{eqnarray*}
\end{proof}

\begin{rmk}
It may seem strange not to absorb the $sgn(\pi)$ into the definition of $C_{\pi}$ and $D_{\pi}$. However we show in \S \ref{Determining multiplicities} that $C_\pi$ and $D_{\pi}$ are always non-negative. 
\end{rmk}

\begin{prop}\label{generalepsilon}
If $\Lambda = (\theta_1^{\lambda_1}, ..., \theta_k^{\lambda_k})$ then
$$ \sa_{\Lambda} = \prod_{i=1}^k \sa_{\theta_i^{\lambda_i}} . $$
\end{prop}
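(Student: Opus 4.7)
The plan is to combine the parabolic induction formula (\ref{parainduction}) with the primary-component factorizations of $F$ (Proposition \ref{p:orthM}), of the centralizer orders (\ref{formforcontr}), and of the Green polynomials (\ref{gfact}).

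Starting from $a_\Lambda = \sum_\bmu \frac{F(c_\bmu)}{|Z(c_\bmu)|} \chi_\Lambda(c_\bmu)$, I would substitute (\ref{parainduction}) to obtain
\begin{align*}
a_\Lambda = \sum_\bmu \frac{F(c_\bmu)}{|Z(c_\bmu)|} \sum_{\substack{\bmu_1,\ldots,\bmu_k \\ \norm{\bmu_i} = d_{\theta_i}|\lambda_i|}} g^\bmu_{\bmu_1,\ldots,\bmu_k}(q) \prod_{i=1}^k \chi_{\theta_i^{\lambda_i}}(c_{\bmu_i}).
\end{align*}
Since Lemma \ref{l:onlyrestricted} applies to any irreducible character, only restricted symmetric types for $\bmu$ contribute at leading order in $q$. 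For restricted $\bmu$, each of $F(c_\bmu)$, $|Z(c_\bmu)|$ and $g^\bmu_{\bmu_1,\ldots,\bmu_k}(q)$ factors as a product indexed by $f \in \Phi^s \cup \Phi^p$.

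Next I would swap the order of summation, regrouping the double sum by primary data at each $f$. Because $g^{\bmu(f)}_{\bmu_1(f),\ldots,\bmu_k(f)}(q^{d_f})$ vanishes unless the $\bmu_i(f)$ together assemble to $\bmu(f)$, and restricted types force $\bmu(f)$ to be a chain of 1's, the $f$-by-$f$ factorization genuinely decouples the $\bmu_i$'s. After gathering the orbits $f$ whose Frobenius-invariant characters pair against a given $\theta_i$, the total sum becomes a product over $i$. Matching each factor against the leading-order computation carried out in Lemma \ref{evenodd} and Proposition \ref{primaryepsilon} identifies it with $a_{\theta_i^{\lambda_i}}$, yielding the desired product.

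The main obstacle will be verifying that the combinatorial factors combine correctly in the $q\to\infty$ limit. Concretely, for the tame eigenvalues $f = t\pm 1$ the Green polynomial $g^{1^n}_{\rho_1,\ldots,\rho_k}(q)$ is a Gaussian multinomial, and for pairs $\{f,f^*\}$ and self-dual $f$ in $\supp'(\bmu)$ one gets factors of the form $(q^{d_f}-1)$ and $(q^{d_f}+1)$; these must interact with the $z_\pi$ normalizations of (\ref{formulaCCpm}) and the $S^\theta$ sums of (\ref{SJsum}) so as to reproduce exactly $\prod_i a_{\theta_i^{\lambda_i}}$. I expect this to reduce to careful bookkeeping of orbit degrees and signed types, using the identities in (\ref{shiftids}).
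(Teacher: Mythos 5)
Your overall strategy coincides with the paper's: reduce to restricted symmetric types via Lemma \ref{l:onlyrestricted}, expand $\chi_\Lambda$ by parabolic induction (\ref{parainduction}), note that a restricted class only decomposes as a direct sum so $g^{\bmu}_{\bmu_1,\dots,\bmu_k}$ forces $\bmu_1\cup\cdots\cup\bmu_k=\bmu$, and then match each factor against Lemma \ref{evenodd}. However, the decoupling step as you state it has a genuine gap. The constraint $\bmu_1\cup\cdots\cup\bmu_k=\bmu$ with $\bmu$ symmetric does \emph{not} force each $\bmu_i$ to be symmetric: for a pair $\{f,f^*\}\subseteq \supp'(\bmu)$ the orbit $f$ can be assigned to block $i$ and $f^*$ to a different block $j$. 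Such cross terms appear in the double sum but in none of the primary quantities $a_{\theta_i^{\lambda_i}}$, so "the total sum becomes a product over $i$" is unjustified until they are killed. The paper kills them by performing the sum over the choice of orbit $f$ first: the resulting factor is
$$\sum_{x\in M_{d_f}} S^{\theta_i}_{d_f/d_{\theta_i}}(x)\, S^{\theta_j}_{d_f/d_{\theta_j}}(x^{-1}),$$
which is (up to the factor $q^{d_f}-1$) the inner product of the distinct characters attached to $\theta_i\neq\theta_j$ and hence vanishes. This orthogonality argument is the missing idea; without it the regrouping by "primary data at each $f$" does not close up.

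The second issue is the one you honestly flag as your "main obstacle": after the cross terms are removed you still must show that the surviving leading coefficients multiply out to $\prod_i a_{\theta_i^{\lambda_i}}$ rather than to that product times some ratio of Gaussian multinomials, $z$-factors and powers of $(q^{d_f}\pm1)$. The paper avoids recomputing any of this combinatorics: it suffices to check that the two rational functions
$$\frac{F(\tau_1\oplus\cdots\oplus\tau_k)}{|Z(\tau_1\oplus\cdots\oplus\tau_k)|}\; g^{\tau}_{\tau_1,\dots,\tau_k}(q) \qquad\text{and}\qquad \prod_{i=1}^k \frac{F(\tau_i)}{|Z(\tau_i)|}$$
are both monic of the same degree, which follows directly from (\ref{zdeg}), (\ref{fdeg}) and (\ref{gdeg}); then the constant terms of the two expressions for $a_\Lambda$ and $\prod_i a_{\theta_i^{\lambda_i}}$ agree term by term as $q\to\infty$. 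You should adopt this degree comparison in place of the "careful bookkeeping" you propose, which as described would be substantially harder and is not actually carried out in your plan.
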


\begin{proof}
By Lemma \ref{l:onlyrestricted} we know that 
\begin{equation}\label{aLameqn}
\sa_{\Lambda}  + O(q^{-1}) = \sum_{\tau  \text{ restricted symmetric type} }   \frac{b_{\eta}(q)}{a_\eta(q)}h_{\Lambda,\eta}(q) .
\end{equation}
 
 Substituting (\ref{hforgeneral}) we get 
\begin{align*}
\sa_{\Lambda}  & =_{l.o.t.} \sum_{\substack{\norm{\tau_i} =n_i,\\ i=1,...,k}} \frac{b_{\tau_1 \cup ...\cup \tau_k }(q)  }{a_{\tau_1\cup...\cup\tau_k}(q)}  g^{\tau}_{\tau_1,...,\tau_k}(q)  \prod_{i=1}^k  h_{(\theta_i^{\lambda_i}),\tau_i}(q) \\
 &=_{l.o.t.} \prod_{i=1}^k  \left( \sum_{\norm{\tau_i} = n_i}   \frac{b_{\tau_i}(q)}{a_{\tau_i}(q)}  h_{(\theta_i^{\lambda_i}),\tau_i}(q) \right) = _{l.o.t.} \prod_i \sa_{\theta_i^{\lambda_i}} 
\end{align*}
where the second equivalence is a consequence of the two rational functions in $q$
\begin{align*}
\frac{b_{\tau_1 \cup ...\cup \tau_k }(q)  }{a_{\tau_1\cup...\cup\tau_k}(q)}  g^{\tau}_{\tau_1,...,\tau_k}(q)    && \prod_{i=1}^k   \frac{b_{\tau_i}(q)}{a_{\tau_i}(q)} 
\end{align*}
being monic of the same degree, which is readily verified from (\ref{gdeg}), (\ref{zdeg}), and (\ref{fdeg}). It follows that $\sa_{\Lambda} = \prod_i \sa_{\theta_i^{\lambda_i}} $ for $ char(q) \gg 1$.
\end{proof}

\section{Schur function formulas}\label{Determining multiplicities}

Proposition \ref{primaryepsilon} can be reformulated in terms of symmetric functions. Let $\Lambda_{\Z}$ be the ring of symmetric functions in variables $\{x_1,x_2,\dots \}$. For each $\pi \in \PP$ define the power function $p_\pi = \prod_i p_{\pi_i}$ where $p_n := x_1^n +x_2^n+... \in \Lambda_\Z$. 
These are related to the Schur functions $s_\lambda \in \Lambda_{\Z}$ by
\begin{equation}\label{schurchavar}
  p_\pi  =  \sum_{\pi \vdash n}  \chi_\pi^{\lambda} s_\lambda.
  \end{equation}
Both  $\{ p_\pi| \pi \in \PP\} $ and $\{s_{\lambda} | \lambda \in \PP\}$ form $\Z$-bases of $\Lambda_\Z$.

\begin{cor}
We have an equality of symmetric functions
\begin{align}\label{convoutsings}
 \sum_{\lambda \in \PP} \sa_{\lambda'}^+ s_{\lambda}  &= \sum_{\pi \in \PP} C_{\pi} p_\pi  &
 \sum_{\lambda \in \PP} \sa_{\lambda'}^- s_{\lambda} & = \sum_{\pi \in \PP}  D_{\pi} p_\pi 
 \end{align}
\end{cor}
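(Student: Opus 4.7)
The plan is to recognize the identity as a change-of-basis statement for symmetric functions, and reduce it to the content of Proposition \ref{primaryepsilon} using a standard $S_n$-representation-theoretic fact.

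First, I would recall the identity
\begin{equation*}
\chi^{\lambda'}_\pi = sgn(\pi)\, \chi^\lambda_\pi,
\end{equation*}
which comes from the fact that the irreducible $S_n$-representation associated to the conjugate partition $\lambda'$ is obtained from the one for $\lambda$ by tensoring with the sign character. One needs to check that the quantity $sgn(\pi) = (-1)^{\ell_{ev}(\pi)}$ defined in \S\ref{s:partitions} really coincides with the sign of a permutation of cycle type $\pi$; this is immediate since for $\pi = (1^{r_1}2^{r_2}\ldots)$, the product $\prod_d (-1)^{(d-1)r_d}$ has the same parity as $\sum_{d\text{ even}} r_d = \ell_{ev}(\pi)$, because $(d-1)r_d \equiv r_d \pmod 2$ when $d$ is even and $\equiv 0$ when $d$ is odd.

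Next, applying Proposition \ref{primaryepsilon} to $\lambda'$ and using $sgn(\pi)^2 = 1$ gives
\begin{equation*}
a^+_{\lambda'} = \sum_{\pi \vdash |\lambda|} sgn(\pi)\, C_\pi\, \chi_\pi^{\lambda'} = \sum_{\pi \vdash |\lambda|} sgn(\pi)^2 \, C_\pi\, \chi_\pi^\lambda = \sum_{\pi \vdash |\lambda|} C_\pi\, \chi_\pi^\lambda,
\end{equation*}
and analogously $a^-_{\lambda'} = \sum_{\pi \vdash |\lambda|} D_\pi\, \chi_\pi^\lambda$.

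Finally, I would expand the right-hand side of \eqref{convoutsings} using the transition formula \eqref{schurchavar} (which, being Schur-orthogonality in disguise, has the correct form $p_\pi = \sum_{\lambda \vdash |\pi|} \chi_\pi^\lambda s_\lambda$) and interchange the summation:
\begin{equation*}
\sum_{\pi \in \PP} C_\pi\, p_\pi = \sum_{\pi \in \PP} C_\pi \sum_{\lambda: |\lambda| = |\pi|} \chi_\pi^\lambda\, s_\lambda = \sum_{\lambda \in \PP} \Biggl( \sum_{\pi \vdash |\lambda|} C_\pi\, \chi_\pi^\lambda \Biggr) s_\lambda = \sum_{\lambda \in \PP} a^+_{\lambda'}\, s_\lambda,
\end{equation*}
which is the first equation of \eqref{convoutsings}. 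The argument for the second equation is identical, with $C_\pi$ replaced by $D_\pi$ and $a^+$ by $a^-$. There is essentially no obstacle here: the content of the corollary is simply a reformulation of Proposition \ref{primaryepsilon} in the language of symmetric functions, with the single nontrivial ingredient being the sign-twist identity relating $\chi^\lambda$ and $\chi^{\lambda'}$.
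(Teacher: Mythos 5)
Your proof is correct and follows essentially the same route as the paper: the paper passes from Proposition \ref{primaryepsilon} to the corollary by twisting with the sign (alternating) representation, which is exactly your character identity $\chi_\pi^{\lambda'} = sgn(\pi)\chi_\pi^{\lambda}$ combined with the expansion of $p_\pi$ in Schur functions. Your verification that $sgn(\pi)=(-1)^{\ell_{ev}(\pi)}$ agrees with the sign of a permutation of cycle type $\pi$, and your correction of the index in \eqref{schurchavar} to a sum over $\lambda\vdash|\pi|$, are both sound.
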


\begin{proof}
Using (\ref{schurchavar}), Proposition \ref{primaryepsilon} is equivalent to the identities
\begin{align*} \sum_{\lambda \in \PP} \sa^+_{\lambda}s_{\lambda} &  = \sum_{\pi \in \PP} sgn(\pi) C_{\pi} p_\pi &
 \sum_{\lambda \in \PP} \sa^-_{\lambda} s_{\lambda}  &= \sum_{\pi \in \PP} sgn(\pi) D_{\pi} p_\pi
 \end{align*}
and (\ref{convoutsings}) follows by multiplying the degree $n$ summands on both sides by $s_{1^n}$ which corresponds to tensoring by the alternating representation of the symmetric group and satisfies $s_{\lambda} s_{1^n} = s_{\lambda'}$ and $p_\pi s_{1^n} = sgn(\pi) p_\pi$ .
\end{proof}

\begin{prop}
We have equalities of symmetric functions
$$  \sum_{\pi \in \PP} C_\pi p_{\pi} =  \left( \prod_{m \text{ odd}} e^{ 2\frac{p_m}{m} + \frac{p_m^2}{2m}} \right)  \left(\prod_{m \text{ even}} e^{ \frac{p_m}{m}+\frac{p_m^2}{2m}} \right) .$$
$$  \sum_{\pi \in \PP} D_\pi p_{\pi} =  \left( \prod_{\text{ $m$ odd}} e^{ \frac{p_m^2}{2m}} \right)  \left(\prod_{\text{ $m$ even}} e^{ \frac{p_m}{m}+\frac{p_m^2}{2m}} \right).$$
\end{prop}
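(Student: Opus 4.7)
The definitions of $C_\pi$ and $D_\pi$ are convolution-type sums over four-part decompositions $\pi = \rho_+ \cup \rho_- \cup 2\cdot\tau_s \cup 2\tau_p$, and crucially the summand factors over the four parts of the decomposition. Since the power-sum symmetric functions are multiplicative under union of partitions, i.e.\ $p_{\alpha \cup \beta} = p_\alpha p_\beta$, the entire generating function factors as a product of four independent sums. So the plan is simply to evaluate each factor separately using variants of the classical identity
\begin{equation*}
\sum_{\rho \in \PP} \frac{p_\rho}{z_\rho} = \prod_{m \geq 1} \exp\!\left(\frac{p_m}{m}\right),
\end{equation*}
then multiply and collect.

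More concretely, for $C_\pi$ I obtain
\begin{equation*}
\sum_{\pi} C_\pi p_\pi = A_+ \cdot A_+ \cdot B \cdot E,
\end{equation*}
where
\begin{align*}
A_+ &= \sum_{\rho} \frac{p_\rho}{z_\rho} = \prod_m e^{p_m/m}, \\
B &= \sum_{\tau_s \in \PP} \frac{(-1)^{\ell(\tau_s)} p_{2\cdot\tau_s}}{z_{2\cdot\tau_s}} = \prod_{m \text{ even}} e^{-p_m/m}, \\
E &= \sum_{\tau_p \in \PP} \frac{p_{2\tau_p}}{z_{2\cdot\tau_p}} = \prod_{m} e^{p_m^2/(2m)}.
\end{align*}
The first is the classical identity. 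For $B$, write $\tau_s = (1^{j_1}2^{j_2}\cdots)$, so that $p_{2\cdot\tau_s} = \prod_d p_{2d}^{j_d}$ and $z_{2\cdot\tau_s} = \prod_d (2d)^{j_d} j_d!$, and interchange sum and product. For $E$, write $\tau_p = (1^{k_1}2^{k_2}\cdots)$, so $p_{2\tau_p} = \prod_d p_d^{2k_d}$ and $z_{2\cdot\tau_p} = \prod_d (2d)^{k_d} k_d!$; then
\begin{equation*}
E = \prod_d \sum_{k \geq 0} \frac{(p_d^2/(2d))^k}{k!} = \prod_d \exp\!\left(\frac{p_d^2}{2d}\right).
\end{equation*}
Multiplying the four factors and separating odd and even indices yields the claimed formula for $\sum C_\pi p_\pi$.

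For $D_\pi$, the only change is an extra $(-1)^{|\rho_-|}$ weight on the $\rho_-$ sum, which replaces the second factor $A_+$ by
\begin{equation*}
A_- := \sum_\rho \frac{(-1)^{|\rho|} p_\rho}{z_\rho} = \prod_d e^{(-1)^d p_d/d} = \prod_{d \text{ odd}} e^{-p_d/d} \prod_{d \text{ even}} e^{p_d/d},
\end{equation*}
since $(-1)^{|\rho|} = \prod_d (-1)^{d m_d}$ factors over parts. Multiplying $A_+ \cdot A_- \cdot B \cdot E$ and simplifying on odd/even $m$ gives the second formula. The whole argument is essentially bookkeeping; the one point requiring care is matching the denominators $z_{2\tau_s}$ and $z_{2\tau_p}$ appearing in \eqref{formulaCCpm} with the combinatorial normalizations that make the four sums into exponential generating functions, but this is dictated by the derivation of $C_\pi^\pm$ in the proof of Proposition~\ref{primaryepsilon} and involves no real obstacle.
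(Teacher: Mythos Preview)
Your proof is correct and uses essentially the same idea as the paper: both exploit the multiplicative structure to reduce to a product of exponential generating functions. The only organizational difference is that you factor according to the four pieces $(\rho_+,\rho_-,\tau_s,\tau_p)$ of the decomposition, whereas the paper factors according to the part size $m$ (first proving $C_{1^{r_1}2^{r_2}\cdots}=\prod_m C_{m^{r_m}}$, then computing $\sum_n C_{m^n}t^n$ separately for $m$ odd and $m$ even); your ordering is arguably cleaner since the odd/even case split emerges automatically when you multiply $A_+^2 B E$ and $A_+A_-BE$, rather than having to be imposed by hand.
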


\begin{proof}
Because $z_{1^{r_1}2^{r_2}...} = \prod_{m} z_{m^{r_m}}$ we deduce from (\ref{formulaCD}) that 
\begin{align*}
C_{1^{r_1}2^{r_2}...} & = \prod_{m} C_{m^{r_m}} & & \text{and} & D_{1^{r_1}2^{r_2}...} & = \prod_{m} D_{m^{r_m}}.
\end{align*}
so 
\begin{align*}
\sum_{\pi \in \PP} C_\pi p_{\pi}  & = \prod_m \Big( \sum_{i=0}^{\infty} C_{m^n} p_m^n\Big) & & \text{and} & \sum_{\pi \in \PP} D_\pi p_{\pi}  & = \prod_m \Big( \sum_{n=0}^{\infty} D_{m^n} p_m^n\Big) .
\end{align*}
We are reduced to identifying the generating functions $\sum_{n\geq 1} C_{m^n}t^n$ and $\sum_{n\geq 1} D_{m^n}t^n$.

If $m$ is odd,  we have
$$ C_{m^n} :=  C_{m^n}^+ + C_{m^n}^- =  \sum_{i+j +2k=n} \frac{1}{i! j! k! 2^k m^{i+j+k}},$$
$$ D_{m^n} :=  C_{m^n}^+ - C_{m^n}^- =  \sum_{i+j +2k=n} \frac{(-1)^j}{i! j! k! 2^k m^{i+j+k}},$$
so the generating functions satisfies
\begin{eqnarray*}
  \sum_{n \geq 0} C_{m^n} t^n & =& \left( \sum_{i} \frac{(t/m)^i}{i!} \right)\left(\sum_{j} \frac{(t/m)^j}{j!} \right) \left( \sum_{k} \frac{(t^2/2m)^k}{k!} \right) = e^{\frac{2t}{m} + \frac{t^2}{2m}}.\\
\sum_{n \geq 0} D_{m^n} t^n & =& \left(\sum_{i} \frac{(t/m)^i}{i!} \right) \left(\sum_{j} \frac{(-t/m)^j}{j!} \right) \left( \sum_{k} \frac{(t^2/2m)^k}{k!} \right) =  e^{ \frac{t^2}{2m}}.
 \end{eqnarray*}
 
If $m$ is even, then
$$ C_{m^n} = C_{m^n}^+ = D_{m^n} = \sum_{i+j+2k+2l= n}  \frac{(-1)^l}{i! j! k! l!2^k m^{i+j+k+l}}  $$
so
\begin{align*}
 \sum_{n \geq 0} C_{m^n} t^n = \sum_{n \geq 0} D_{m^n} t^n  = \left(\sum_{i} \frac{(t/m)^i}{i!} \right)^2 \left( \sum_{k} \frac{(t^2/2m)^k}{k!} \right) \left(\sum_{l} \frac{(-t/m)^l}{l!} \right) = e^{\frac{t}{m} + \frac{t^2}{2m}}. & \qedhere
\end{align*} 
\end{proof}

It remains to express these symmetric polynomials in the Schur function basis.  This makes use of Pieri's rule. Let $\pi \in \PP$ be a partition thought of as Young diagram. Pieri's rule states that
$$ s_\pi s_n =  \sum_\lambda s_\lambda  $$ 
summed over $\lambda$ obtained from $\pi$ by adding $n$ blocks with at most one block per column.  Dually
$$s_\pi s_{1^n} =  \sum_\lambda s_\lambda   $$
summed over $\lambda$ obtained from $\pi$ by adding $n$ blocks with at most one block per row.  

\begin{thm}
Let $\lambda \in \PP$ be a partition with transpose $\lambda = (1^{r_1}2^{r_2}...)$. Then  
\begin{equation}\label{rir2}
\sa^+_{\lambda}= (r_1+1)(r_2+1)....
\end{equation} 
and
$$ \sa^-_{\lambda} = \begin{cases} 1 & \text{ if $\lambda'$ has only even parts} \\ 0 & \text{ otherwise} \end{cases}  $$
\end{thm}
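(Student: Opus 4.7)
The plan is to bridge the power-sum generating identities established in the preceding Proposition with the explicit combinatorial formulas in two steps: first, convert them to Schur-basis identities, then apply Pieri's rule.

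For Step~1, I will use the classical exponential formulas
\[
\sum_{n \geq 0} h_n = \exp\Bigl(\sum_{m \geq 1} \tfrac{p_m}{m}\Bigr), \qquad \sum_{n \geq 0} e_n = \exp\Bigl(\sum_{m \geq 1} (-1)^{m-1} \tfrac{p_m}{m}\Bigr),
\]
Littlewood's identity $\sum_{\mu \in \PP} s_\mu = \exp\bigl(\sum_m p_m/m + (p_m^2 - p_{2m})/(2m)\bigr)$, and the equalities $s_n = h_n$, $s_{1^n} = e_n$, to compute in the power-sum basis that
\[
\log\Bigl(\sum_\mu s_\mu \Big/ \sum_n s_{1^n}\Bigr) = \sum_{m\text{ even}} \tfrac{p_m}{m} + \sum_m \tfrac{p_m^2}{2m}.
\]
The right-hand side matches $\log\bigl(\sum_\pi D_\pi p_\pi\bigr)$ from the preceding Proposition, so the Corollary opening this section upgrades to $\sum_\lambda a^-_{\lambda'} s_\lambda = \bigl(\sum_\mu s_\mu\bigr)\big/\bigl(\sum_n s_{1^n}\bigr)$. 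For the $+$-identity I will apply the involution $\omega$ (sending $s_\lambda \mapsto s_{\lambda'}$ and $p_m \mapsto (-1)^{m-1} p_m$): applied to $\sum_\lambda a^+_{\lambda'} s_\lambda = \sum_\pi C_\pi p_\pi$, the left side becomes $\sum_\mu a^+_\mu s_\mu$ after the reindexing $\mu = \lambda'$, while a parallel log comparison identifies the right side with $\bigl(\sum_\mu s_\mu\bigr)\bigl(\sum_n s_{1^n}\bigr)$.

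For Step~2, I apply Pieri's rule. The vertical-strip Pieri rule $s_\mu \cdot s_{1^n} = \sum_\lambda s_\lambda$ (sum over $\lambda$ obtained from $\mu$ by adding a vertical strip of size $n$) shows that $a^+_\lambda$ equals the number of $\mu \subseteq \lambda$ with $\lambda/\mu$ a vertical strip. Writing $\lambda = (1^{r_1} 2^{r_2} \cdots)$, equal parts of $\lambda$ group into blocks of sizes $r_i$; within each block, removing at most one box per row while preserving the partition property forces the deleted rows to form a suffix of the block, yielding $r_i + 1$ choices per block and $\prod_i (r_i+1)$ in total. For the minus case, I rewrite the second Schur identity as $\bigl(\sum_\mu a^-_{\mu'} s_\mu\bigr)\bigl(\sum_n s_{1^n}\bigr) = \sum_\lambda s_\lambda$, so Pieri yields
\[
\sum_{\mu \subseteq \lambda,\ \lambda/\mu \text{ vert.\ strip}} a^-_{\mu'} = 1 \quad \text{for every } \lambda.
\]
I will exhibit, for each $\lambda$, the unique $\mu$ with all even parts and $\lambda/\mu$ a vertical strip: set $\mu_i := \lambda_i$ when $\lambda_i$ is even and $\mu_i := \lambda_i - 1$ when $\lambda_i$ is odd. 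This $\mu$ is a valid partition because equal parts of $\lambda$ share parity, so the sequence remains weakly decreasing, and the parity requirement on $\mu$ forces the choice in each row, giving uniqueness. The uniqueness of the Schur expansion then forces $a^-_{\mu'} = 1$ iff $\mu$ has all even parts; relabeling $\nu = \mu'$ gives $a^-_\nu = 1$ iff $\nu'$ has only even parts.

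The main delicate point will be Step~1: carefully tracking odd/even parities of $m$, handling the cancellation between $p_{2m}$-terms from Littlewood's identity and $p_m/m$ contributions, and the reindexing under $\omega$ needed to pass from the Corollary's formula (indexed by $a^+_{\lambda'}$) to the stated Schur identity (indexed by $a^+_\lambda$). The Pieri-rule step is then a clean combinatorial extraction requiring only the two observations above about vertical strips on blocks of equal parts and the parity-dictated unique reduction to an all-even partition.
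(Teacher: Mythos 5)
Your proposal is correct and follows essentially the same route as the paper: identify the power-sum generating functions $\sum_\pi C_\pi p_\pi$ and $\sum_\pi D_\pi p_\pi$ with $\bigl(\sum_\mu s_\mu\bigr)\bigl(\sum_n s_{1^n}\bigr)$ and $\bigl(\sum_\mu s_\mu\bigr)/\bigl(\sum_n s_{1^n}\bigr)$ via the exponential identities for $\sum s_\lambda$, $\sum h_n$, $\sum e_n$, and then extract the coefficients with Pieri's rule (your block-suffix count for vertical strips and the parity-forced unique even subpartition are exactly the combinatorial facts the paper invokes). The only cosmetic difference is that you route the $+$ case through the involution $\omega$ and vertical strips, whereas the paper multiplies the degree-$n$ pieces by $s_{1^n}$ and works with horizontal strips on $a^+_{\lambda'}$ directly; these are equivalent.
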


\begin{proof}
By Macdonald (\cite{M}  \S I.7, Example 11), we have an equality of symmetric functions
\begin{equation}\label{formsumlambda}
\sum_{\lambda \in \PP} s_{\lambda}  = \left(\prod_{\text{ $n$ odd}} e^{ \frac{p_n}{n} + \frac{p_n^2}{2n}} \right)  \left(\prod_{\text{ $n$ even}} e^{ \frac{p_n^2}{2n}} \right).
\end{equation}
The Schur function $s_n = h_n$ is the complete sum of monomials of degree $n$, so 
\begin{equation}\label{snid}
 \sum_{n=0}^{\infty} s_n = \prod_i (1-x_i)^{-1} = \prod_{n}  e^{ \frac{p_n}{n}}
 \end{equation}
where the second equality is verified by taking logarithms. It follows that
\begin{equation}\label{identitya+}
\left( \sum_{\lambda \in \PP} s_{\lambda} \right) \left( \sum_{n=0}^{\infty} s_n \right) =  \prod_{n \text{ odd}} e^{ 2\frac{p_n}{n} + \frac{p_n^2}{2n}} \cdot \prod_{n \text{ even}} e^{ \frac{p_n}{n}+\frac{p_n^2}{2n}} = \sum_{\pi \in \PP} C_\pi p_{\pi} =  \sum_{\lambda \in \PP} \sa_{\lambda'}^+ s_\lambda.
\end{equation}
from which (\ref{rir2}) is deduced using Pieri's rule.

For any Young diagram, there is a unique way to remove at most one block from each row to get another diagram with only even length rows. It therefore follows by Pieri's rule that
$$ \left( \sum_{\substack{\lambda \in \PP \\ \lambda \text{ is even}} } s_\lambda \right) \left( \sum_n s_{1^n} \right) = \sum_{\lambda \in \PP} s_\lambda.$$
where $s_{1^n} = e_n$ is the $n$-th elementary symmetric polynomial. Here
\begin{equation}\label{s1nid}
 \sum_n s_{1^n} = \prod_i (1+x_i) = \left( \prod_{\text{n odd}}  e^{\frac{p_n}{n}} \right) \left( \prod_{\text{n even}} e^{-\frac{p_n}{n}} \right)
 \end{equation}
where the second equality is verified by applying logarithms. Combined with (\ref{formsumlambda}) we deduce
$$    \sum_{\substack{\lambda \in \PP \\ \lambda \text{ is even}} } s_\lambda    =  \left( \prod_{\text{ $n$ odd}} e^{ \frac{p_n^2}{2n}} \right)  \left(\prod_{\text{ $n$ even}} e^{ \frac{p_n}{n}+\frac{p_n^2}{2n}} \right) = \sum_{\pi \in \PP} D_\pi p_{\pi} = \sum_{\lambda \in \PP} \sa_{\lambda'}^- s_{\lambda} . $$
\end{proof}

\section{The E-polynomial of $\mathcal{M}_n^{\tau}$}\label{E-polysec}

Recall  Corollary \ref{FormulaCor} that we are considering
\begin{eqnarray*}
E_n(q) &:=& |G_n|^{g-1}  \sum_{\chi \in \Irr G}  \frac{\chi(\xi)}{\chi(1)^g} (\sa_{\chi})^{r} =  \sum_{\norm{\Lambda}=n }   (\sa_{\Lambda})^{r} \left( \frac{|G_n|}{\chi_\Lambda(1)}\right)^{g-1}   \frac{\chi_{\Lambda}(\xi)}{\chi_{\Lambda}(1)} \\
\end{eqnarray*}
where $G_n := GL_n(\F_q)$ and we use shorthand $\alpha I_n = \alpha$ for $\alpha \in \F_q^{\times}$ .

From \cite[(3.1.1)]{HRV} we know that 
\begin{align}
\Delta_{\Lambda}(\xi) & :=   \frac{\chi_{\Lambda}(\xi )}{\chi_{\Lambda}( 1)} =  \langle \prod_{\gamma \in L} \gamma^{|\Lambda(\gamma)|}, \xi \rangle_1.
\end{align}
From \cite[(3.1.5)]{HRV}  we know
\begin{align}
\frac{|G_n| }{ \chi_{\Lambda}(I_n)} & =   (-q^{\frac{1}{2}})^{n^2}  \mathcal{H}_{\Lambda'}(q)\label{Gquotdegree} = (-q^{\frac{1}{2}})^{n^2} \prod_{\theta \in \Theta} \mathcal{H}_{\Lambda(\theta)'}(q^{d_{\theta}})
\end{align}
where for $\lambda \in \PP$, the $\mathcal{H}_\lambda(t)$ is the normalized hook polynomial
\begin{equation}\label{hookploy}
 \mathcal{H}_\lambda(t)  =   t^{- (n(\lambda) + \frac{|\lambda|}{2})} \prod (1 -t^h) 
 \end{equation}
the product indexed by boxes in the Young diagram of $\lambda$ and $h$ is the hook length of the box (see \cite{HRV} (2.47)). Despite the fractional exponents, (\ref{Gquotdegree}) is a polynomial in $q$. 

Both (\ref{Gquotdegree}) and $\sa_\Lambda$ depend only on the signed type $\sigma = (\sigma^+, \sigma^-)$ of $\Lambda$  (\ref{signedtype}) so we can rearrange our formula
\begin{eqnarray}\label{eqformula}
E_n(q) &=&   (-q^{\frac{1}{2}})^{n^2(g-1)}  \sum_{\text{signed types $\sigma$}}   (\sa_{\sigma})^{r} \mathcal{H}_{\sigma'}(q) ^{g-1}   \sum_{\Lambda \in \sigma}  \Delta_{\Lambda}(\xi).
\end{eqnarray}

\begin{lem}\label{incexclcalc}
Let $\sigma = (\sigma^+, \sigma^-)$ be a signed type for which $m_{d,\lambda}^{\pm}$ is the number of Frobenius orbits $\theta \in \Theta_d^{\pm}$ that are sent to the partition $\lambda$. Set $m_d^{\pm} := \sum_{\lambda} m_{d, \lambda}^{\pm}$, $m_{d,\lambda} = m_{d,\lambda}^+ + m_{d,\lambda}^-$, $m_d := m_d^+ + m_d^-$, and $m := \sum_d m_d$. Then
$$\sum_{\Lambda \in \sigma} \Delta_{\Lambda}(\xi) =  \begin{cases}  \pm (-1)^{m-1} \frac{\mu(d)}{d}  \frac{ (m -1)!}{\prod_{\lambda} m_{d,\lambda}!}   \frac{(q-1)}{2}  &  \text{ if } m =m_d^+ \text{ or } m= m_d^- \text{ for some $d \equiv 1~mod~2$}  \\ 0 & \text{otherwise} \end{cases}$$
where $\mu$ is the classical Moebius function
$$ \mu(d) = \begin{cases}  1 & \text{ if $d$ is square free and has an even number of prime factors} \\
-1 & \text{ if $d$ is square free and has an odd number of prime factors} \\
0 & \text{ if $d$ is not square free} \\
 \end{cases} $$
\end{lem}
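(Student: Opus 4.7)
The plan is to carry out two nested Möbius inversions together with character orthogonality on the sign-restricted character group $L_1^{\epsilon} := \{\chi \in L_1 : \chi(-1) = \epsilon\}$. The starting point is the identity
\begin{equation*}
\Delta_{\Lambda}(\xi) = \prod_{\theta \in \supp \Lambda} \chi_{\theta}(\xi)^{d_{\theta} |\Lambda(\theta)|},
\end{equation*}
where $\chi_{\theta} := \gamma|_{M_1} \in L_1^{\epsilon}$ for any $\gamma \in \theta \in \Theta_{d}^{\epsilon}$; this restriction is independent of the choice of $\gamma \in \theta$ because $\gamma(y)$ is a $(q-1)$-th root of unity for $y \in M_1$, so $\gamma^q|_{M_1} = \gamma|_{M_1}$. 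As this formula depends on $\theta$ only through $\chi_{\theta}$ and the exponent $d_\theta |\Lambda(\theta)|$, which is bookkept by the signed type $\sigma$, the sum $\sum_{\Lambda \in \sigma} \Delta_{\Lambda}(\xi)$ factors as $\prod_{d, \epsilon} S_{d, \epsilon}$, where $S_{d, \epsilon}$ runs over the assignments of partitions to orbits in $\Theta_d^{\epsilon}$ with multiplicities $m_{d, \lambda}^{\epsilon}$ prescribed by $\sigma$.

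To evaluate $S_{d, \epsilon}$, I would parametrize the assignments by injective maps $\phi \colon I_{d, \epsilon} \hookrightarrow \Theta_d^{\epsilon}$ where $I_{d, \epsilon} = \{(i, \lambda) : 1 \leq i \leq m_{d, \lambda}^{\epsilon}\}$, divided by the permutation symmetry $\prod_\lambda m_{d, \lambda}^{\epsilon}!$ acting within each fixed partition. Möbius inversion on the partition lattice $\Pi(I_{d, \epsilon})$ then converts injectivity into a signed sum over block-constant maps, giving
\begin{equation*}
S_{d, \epsilon} = \frac{1}{\prod_\lambda m_{d, \lambda}^{\epsilon}!} \sum_{\pi \in \Pi(I_{d, \epsilon})} \mu(\hat 0, \pi) \prod_{B \in \pi} P_d^{\epsilon}(K_B),
\end{equation*}
where $K_B := \sum_{(i, \lambda) \in B} |\lambda|$ and $P_d^{\epsilon}(k) := \sum_{\theta \in \Theta_d^{\epsilon}} \chi_{\theta}(\xi)^{dk}$. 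A second Möbius inversion, now over divisors $e \mid d$, restricts the sum defining $P_d^{\epsilon}(k)$ to Frobenius orbits of exact size $d$; combined with orthogonality for characters on $L_1^{\epsilon}$, this shows that $P_d^{\epsilon}(k)$ vanishes unless $\xi^{dk} \in \{\pm 1\}$, i.e.\ unless $n \mid dk$.

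The norm constraint $\|\Lambda\| = n$ forces each $K_B$ to lie in a specific arithmetic progression; combined with positivity $K_B \geq 1$ and $\sum_B K_B = n/d$ at the surviving $(d,\epsilon)$, the divisibility condition selects only the coarsest partition $\pi = \hat 1$, so just the single-block term with Möbius weight $(-1)^{m-1}(m-1)!$ survives. The explicit evaluation of $P_d^{\epsilon}(n/d)$ by character orthogonality at $\xi^n = -1$ then produces the sign $\epsilon$ and the $(q-1)/2$ prefactor. For the vanishing claim, any cross-$(d, \epsilon)$ configuration forces the per-factor norm to be strictly less than $n$, breaking the single-block constraint for some factor and producing zero; the even-$d$ case is excluded by tracking parities in the characters $\chi_\theta(-1)^{\cdots}$ that appear after the second Möbius inversion and seeing they cannot match $\xi^n = -1$.

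The main obstacle I anticipate is the final simplification of the divisor-level Möbius sum appearing after the second inversion: one must show that the combination of $\sum_{e \mid d} \mu(d/e)(q^e - 1)$ type terms, once paired against the orthogonality indicator $\mathbf{1}[\xi^{dk}=-1]$ and the character-sum prefactor, collapses to the classical Möbius quantity $\mu(d)(q-1)/d$ rather than a higher-degree polynomial in $q$. This is where the oddness of $d$ will enter decisively, via parity cancellations between the contributions from $L_1^+$ and $L_1^-$, and it is the step requiring the most delicate bookkeeping.
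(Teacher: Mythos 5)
Your overall architecture is viable and is a genuine variant of the paper's argument: the paper performs a single M\"obius inversion over the poset of $\rho$-stable partitions of the full index set $I\hookrightarrow L$ and quotes Hanlon's formula for the M\"obius function of that fixed-point partition lattice, which delivers the factor $\mu(d)(-d)^{m_d-1}(m_d-1)!$ in one stroke; you instead factor over the pairs $(d,\epsilon)$, invert on the ordinary partition lattice of orbits (producing $(-1)^{m-1}(m-1)!$), and then propose a second, divisor-level inversion to isolate orbits of exact degree $d$ (which is where $\mu(d)$ must come from). That two-stage scheme can be made to work and would in effect re-derive the relevant case of Hanlon's formula.

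However, there is a concrete error at the very start that lands precisely on the step you defer as ``the main obstacle.'' The central character is $\Delta_{\Lambda}(\xi)=\prod_{\theta}\bigl(\gamma_\theta|_{M_1}\bigr)(\xi)^{|\Lambda(\theta)|}$, \emph{not} $\prod_\theta \chi_\theta(\xi)^{d_\theta|\Lambda(\theta)|}$: the Frobenius-fixed character $\prod_{\gamma\in\theta}\gamma\in L_{d}$ is identified with an element of $L_1$ via the dual of the norm map $\Nm^*$, not via pointwise restriction to $M_1$, and one checks $\prod_{\gamma'\in\theta}\gamma' = (\gamma|_{M_1})\circ \Nm_{d,1}$, so the pairing $\langle\cdot,\xi\rangle_1$ returns $\gamma(\xi)^{|\lambda|}$ with no factor of $d_\theta$ (e.g.\ the cuspidal character of $GL_2(\F_q)$ attached to $\gamma$ has central character $\gamma|_{\F_q^\times}$, not its square). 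This matters because in your divisor-level inversion the sub-stratum $L_e\cap L_d^\epsilon$ consists of characters $\beta\circ \Nm_{d,e}$, whose restriction to $M_1$ evaluates at $\xi$ as $\beta(\xi^{d/e})$. With the correct exponent the block contribution becomes $\sum_\beta \beta(\xi^{(d/e)K_B})$, and orthogonality kills every stratum except $e=1$ with $dK_B=n$ (where $\xi^{n}=-1$ yields $\epsilon(q-1)/2$), so the divisor sum collapses to the single term $\mu(d)\cdot\epsilon(q-1)/(2d)$ --- no delicate cancellation is needed. With your exponent $d_\theta|\Lambda(\theta)|$ the evaluation point is $\beta\bigl((-1)^{d/e}\bigr)=\beta(-1)$ for every odd $d/e$, every stratum survives, and the sum genuinely equals $\tfrac{\epsilon}{2d}\sum_{e\mid d}\mu(d/e)(q^e-1)$, a polynomial of degree $d$ in $q$ (for $d=3$, $n=3$ this gives $\epsilon(q^3-q)/6$ instead of the correct $-\epsilon(q-1)/6$). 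So the hoped-for collapse fails under your conventions; fixing the exponent is not cosmetic but is the mechanism that produces $\mu(d)(q-1)/d$.
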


\begin{proof}
We use an inclusion-exclusion argument that modifies the proof of \cite[\S 3.4]{HRV}. Let $\Lambda_0: L \rightarrow \PP$ represent the signed type $\sigma$. Choose a finite set  $I$ with an injective map $\zeta_0: I \hookrightarrow L$ onto the support of $\Lambda_0$. Define the permutation $\rho$ of $I$ by $ \zeta \circ \rho = Frob \circ \zeta $ and define functions  $$ l, n : I \rightarrow \Z_{\geq 0}$$ where $l(i)$ equals to the length of the $\rho$-cycle containing $i$, and $n(i) = | \Lambda_0( \zeta_0(i))|$.  Note that $$\sum_{i\in I} n(i) =n.$$  Denote by $I/\rho$ the set of $\rho$-cycles.  

For $a \geq 1$, define $$L^{\pm}_a := \{ \gamma \in L_a | \langle \gamma, -1 \rangle_a = \pm 1\}.$$ Note that $L_a^+ \leq L_a$ is an index two cyclic subgroup and $ (L_a^{\pm}/Frob ) \cap \Theta_a = \Theta_a^{\pm}$. Partition $I = I^+ \cup I^-$  where  $$I^{\pm} := \{ i \in I |  \zeta_0(i) \in L^{\pm}_{l(i)} \}.$$  
Note $\rho$ preserves both $I^+$ and $I^-$. Consider the set of maps $$(I, L)_{\rho} := \{ \zeta: I \rightarrow L |  \zeta \circ \rho = Frob \circ \zeta, \text{ and }\zeta(i) \in L^{\pm}_{l(i)} \text{ for } i \in I^{\pm}  \}$$  and $(I, L)_{\rho}'$ the subset of injective maps. There is a natural $z_\sigma$-to-1 surjective map from $(I, L)_{\rho}'$ onto the set $\Lambda$ of signed type $\sigma$, sending $\zeta_0$ to $\Lambda_0$, where 
\begin{equation}\label{sumup1}
z_{\sigma} := \prod_d ( d^{m_d} \prod_{\lambda}  m_{d,\lambda}^+! m_{d,\lambda} ^-!).
\end{equation}

Consider the function $\varphi: (I, L)_{\rho} \rightarrow \C^\times$ by 
$$\varphi(\zeta) = \langle \prod_{i} \zeta(i)^{n(i)}, \xi \rangle_1 $$
If we define $\Nm_{a,1}: L_a \rightarrow L_1$ by $\Nm_{a,1}(\gamma) = \gamma^{1+q+...+q^{a-1}}$, then 
$$\varphi(\zeta) = \prod_{\text{$c \in I/\rho$}} \left\langle \Nm_{l(c), 1} (\zeta(c)), \xi \right\rangle^{n(c)}_1$$
where $l(c) =l(i)$, $n(c) = n(i)$, and $\zeta(c) = \zeta(i)$ for some $i \in c$ (the value of $\varphi$ is independent of this choice). If
$$ S(I)' := \sum_{ \zeta \in (I, L)_{\rho}' } \varphi( \zeta)$$ 
then by construction we have an equality 
\begin{equation}\label{sumup2}
\sum_{\Lambda \in \sigma} \Delta_{\Lambda}(\xi) = \frac{1}{z_{\sigma}} S(I)'.
\end{equation}

A partition of $I$ describes a surjective map  $ I\rightarrow J$ onto the set of blocks.  Consider $\Pi(I)$ the poset of partitions of $I$ whose blocks are permuted by $\rho$. Denote $(J, L)_{\rho} \subseteq (I, L)_{\rho} $ the subset of maps which are constant on the blocks of $J$ and set $n(j) = \sum_{i \in j}  n(i)$ for $j \in J$. 
By the Moebius inversion formula we have
$$ S(I)' = \sum_{ J \in \Pi(I) }  \mu_{\rho}(J) S(J) $$
where $\mu_{\rho}$ is the Moebius function for the poset $\Pi(I)$ and
$$S(J) := \sum_{\zeta \in (J, L)_{\rho}} \varphi(\zeta). $$ 
For $ \zeta \in (J, L)_{\rho}$ we have $$\varphi(\zeta) = \prod_{c \in J/\rho} \langle \Nm_{l(c),1} (\zeta(c)), \xi^{n(c)} \rangle_1,$$
where for $c \in J/\rho$,  $l(c)$ is the length of the cycle in $J$. We can therefore commute the product through the sum to get
\begin{equation}\label{sumchar}
S(J) =  \prod_{c \in J/\rho}  \left( \sum_{ \gamma \in L(c)}   \langle \Nm_{l(c),1} (\gamma), \xi^{n(c)} \rangle_1 \right)
\end{equation} 
where  
$$L(c) :=  (\bigcap_{\substack{  \tilde{c} \in I^+/\rho \\ \tilde{c} \rightarrow  c}} L_{l(\tilde{c})}^+  ) \cap (\bigcap_{ \substack{  \tilde{c} \in I^-/\rho\\ \tilde{c}\rightarrow c}} L_{l(\tilde{c})}^-) \cap L_{l(c)}. $$ 
If $a$ divides $a'$, then
\begin{align}
L_{a'}^{\pm}  \cap L_{a}  &=   L_{a}^{\pm}& \text{ if $a'/a$ is odd} \label{twolineone} \\ 
L_{a'}^+ \cap L_a &= L_a \text{ and } L_{a'}^- \cap L_a = \emptyset & \text{ if $a'/a$ is even}\label{twolinetwo}
\end{align}
so each $L(c)$ must equal one of $L_{l(c)}, L^{\pm}_{l(c)},$ or $\emptyset$. Since $\xi$ is a primitive $2n$-root of unity, the character $\langle \Nm_{a,1} (\gamma), \xi^{n(c)} \rangle_1$ restricts to the trivial character on $L_a$ if and only if $2n | n(c)$. But this never happens because $n(c) \leq n$. It follows that for all $a$
$$\sum_{\gamma \in L_a}  \langle \Nm_{a,1} (\gamma), \xi^{n(c)} \rangle_1  =  0$$
so
$$\sum_{\gamma \in L_a^+}  \langle \Nm_{a,1} (\gamma), \xi^{n(c)} \rangle_1  = - \sum_{\gamma \in L_a^-}  \langle \Nm_{a,1} (\gamma), \xi^{n(c)} \rangle_1 .$$
The restriction of $\langle \Nm_{a,1} (\gamma), \xi^{n(c)} \rangle_1$ to $L_a^+$ is the trivial character if and only if $n | n(c)$. But $ n(c) \leq n$ is an equality if and only if $J$ is a singleton, so $S(J)=0$ unless $J =\{I\}$. We deduce
\begin{equation}\label{sumup3}
S(I)' =  \mu_{\rho}(\{I \}) S(\{I\}).
\end{equation}
The poset in question is identical with the one considered in \cite{HRV} so we recover the formula (from Hanlon \cite{Han})
$$ \mu_{\rho} (\{I\}) = \begin{cases}   \mu(d)(-d)^{m_d-1} (m_d-1)! & \text{ if $\rho$ has cycle type $(d^{m_d})$} \\ 0 & \text{ otherwise.} \end{cases}. $$
Comparing with (\ref{twolineone})  and (\ref{twolinetwo}) we see that if $\rho = (d^{m_d})$ then
$$S(\{I\}) = \begin{cases}  \pm (q-1)/2  & \text{ if $d$ is odd and $m=m_d = m_d^{\pm}$} \\ 0 & \text{ otherwise}  \end{cases}$$
which combined with (\ref{sumup1}), (\ref{sumup2}), (\ref{sumup3}) completes the proof. 
\end{proof}

\begin{thm}\label{vnformula}
The E-polynomial $E(\mathcal{M}_n^{\tau}; x,y)$ equals $E_n(xy)$ where
$$ E_n(q) :=  \frac{1}{2}(q-1)(-q^{\frac{1}{2}})^{n^2(g-1)} V_n(q)$$
and
\begin{equation}\label{butexp}
V_n(q) :=  \sum_{\substack{ \text{odd }d|n \\  \sum m_{\lambda} |\lambda| = \frac{n}{d}}} (-1)^{m-1}\frac{\mu(d)}{d} \frac{(m-1)!}{\prod_{\lambda} m_{\lambda}!}  \left( (\sa^+_{\sigma})^{r}   - (\sa^-_{\sigma})^{r} \right)\mathcal{H}_{\sigma'}^{g-1}(q^d)
\end{equation}
where the sum taken over odd divisors $d$ of $n$ and non-negative integers $m_{\lambda}$ for $\lambda \in \PP$ such that $\sum_{\lambda} m_{\lambda}|\lambda| = n/d$ and
\begin{align*}
m &:= \sum_{\lambda} m_\lambda & \sa^{\pm}_{\sigma}  &:= \prod_{\lambda} (\sa^{\pm}_{\lambda})^{m_\lambda}   & \mathcal{H}_{\sigma'}(q) &:= \prod_{\lambda} \mathcal{H}_{\lambda'}(q)^{m_\lambda}.
\end{align*} 
\end{thm}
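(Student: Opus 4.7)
The starting point is formula \eqref{eqformula}, which expresses $E_n(q)$ as a sum over signed types $\sigma = (\sigma^+,\sigma^-)$:
\begin{equation*}
E_n(q) = (-q^{1/2})^{n^2(g-1)} \sum_{\sigma} (a_\sigma)^r \,\mathcal{H}_{\sigma'}(q)^{g-1} \sum_{\Lambda \in \sigma} \Delta_\Lambda(\xi).
\end{equation*}
The goal is to evaluate this by first identifying which signed types give nonzero contributions via Lemma \ref{incexclcalc}, and then computing $(a_\sigma)^r$ for those signed types using Propositions \ref{primaryepsilon} and \ref{generalepsilon}.

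First, I would apply Lemma \ref{incexclcalc} to the inner sum. The lemma shows that $\sum_{\Lambda \in \sigma} \Delta_\Lambda(\xi)$ vanishes unless there exists an odd $d$ such that $\sigma$ is concentrated on a single $\Theta_d^{\varepsilon}$ (with $\varepsilon\in\{+,-\}$); equivalently, every $\theta \in \supp \Lambda$ has the same degree $d$ (odd) and the same sign $\langle \theta,-1\rangle_d = \varepsilon$. In this case the inner sum evaluates to
\begin{equation*}
\varepsilon\,(-1)^{m-1}\,\tfrac{\mu(d)}{d}\,\tfrac{(m-1)!}{\prod_\lambda m_{d,\lambda}!}\,\tfrac{q-1}{2},
\end{equation*}
where $m = \sum_\lambda m_{d,\lambda}$. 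In particular, only $d \mid n$ with $d$ odd arise, since $n = \sum_\lambda d\,m_{d,\lambda}|\lambda|$ forces $d\mid n$ and $\sum_\lambda m_{d,\lambda}|\lambda| = n/d$.

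Next, for such $\sigma$, I would compute $a_\sigma$. By Proposition \ref{generalepsilon}, $a_\Lambda = \prod_i a_{\theta_i^{\lambda_i}}$ for $\Lambda = (\theta_1^{\lambda_1},\dots,\theta_k^{\lambda_k})$; and by Proposition \ref{primaryepsilon}, $a_{\theta^\lambda} = a_\lambda^+$ if $\langle\theta,-1\rangle_d = 1$ while $a_{\theta^\lambda} = a_\lambda^-$ if $\langle\theta,-1\rangle_d = -1$. Thus for $\sigma$ concentrated on $\Theta_d^{\varepsilon}$ with multiplicities $m_\lambda := m_{d,\lambda}$, we have $a_\sigma = \prod_\lambda (a_\lambda^{\varepsilon})^{m_\lambda}$, matching the notation $a_\sigma^{\pm}$ in the theorem. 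Similarly, by \eqref{Gquotdegree}, $\mathcal{H}_{\sigma'}(q) = \prod_\lambda \mathcal{H}_{\lambda'}(q^d)^{m_\lambda}$ as defined in the statement.

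Finally, I would reorganize the outer sum as a sum over odd divisors $d \mid n$ and non-negative integer functions $\lambda \mapsto m_\lambda$ with $\sum_\lambda m_\lambda|\lambda| = n/d$. For each such pair $(d,\{m_\lambda\})$, the signed type $\sigma^+$ (all on $\Theta_d^+$) contributes with sign $+$ and the corresponding signed type $\sigma^-$ (all on $\Theta_d^-$) contributes with sign $-$, giving the difference $(a_\sigma^+)^r - (a_\sigma^-)^r$ multiplied by the common combinatorial factor $(-1)^{m-1}\tfrac{\mu(d)}{d}\tfrac{(m-1)!}{\prod m_\lambda!}$ and by $\mathcal{H}_{\sigma'}^{g-1}(q^d)$. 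Pulling out the prefactor $\tfrac{1}{2}(q-1)(-q^{1/2})^{n^2(g-1)}$ yields exactly $V_n(q)$ as in \eqref{butexp}, and invoking Corollary \ref{FormulaCor0} identifies $E(\mathcal{M}_n^\tau;x,y) = E_n(xy)$.

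The only real obstacle is bookkeeping: carefully matching the sign conventions of Lemma \ref{incexclcalc} (the $\varepsilon$ in front) with the definition of $a_\lambda^\pm$ from Proposition \ref{primaryepsilon}, and verifying that the reorganization of the double sum $\sum_\sigma \sum_{\Lambda\in\sigma}$ as $\sum_{d,\{m_\lambda\}}$ correctly enumerates the contributing signed types exactly once. Everything else is a substitution.
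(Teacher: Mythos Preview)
Your proposal is correct and follows essentially the same route as the paper's own proof: substitute Lemma \ref{incexclcalc} into \eqref{eqformula}, read off $a_\sigma$ from Propositions \ref{primaryepsilon} and \ref{generalepsilon}, and then invoke Corollary \ref{FormulaCor0} once the resulting expression is seen to be a polynomial in $q$. The paper's proof is terse where you spell out the bookkeeping, but the strategy and ingredients are identical.
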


\begin{proof}
The formula for $E_n(q)$ is immediate from substituting  Lemma \ref{incexclcalc} into (\ref{eqformula}). The formula for $\sa^{\pm}_{\sigma} $ is found in  Propositions \ref{primaryepsilon} and \ref{generalepsilon}.  

Since $E_n(q)$ is a polynomial expression in $q$, this establishes the hypothesis of Corollary \ref{FormulaCor0} and determines the E-polynomial.
\end{proof}

\begin{eg}
$E_1(q)$ is a single term, indexed by $ (d, \lambda^{m_\lambda}) = (1, (1)^1)$
$$E_1(q) = \frac{(q-1)}{2} |G_1|^{g-1} (2^r) = 2^{r-1} (q-1)^g  $$
in agreement with Example \ref{n=1eg}.
\end{eg}

\begin{eg}
$E_2(q)$ is a sum of three terms, indexed by  $$(d, \lambda^{m_\lambda}) \in \{ (1, (2)^1), (1,(1^2)^1), (1,(1)^2)\}.$$ Explicitly
\begin{eqnarray*}
 E_2(q) &=& \frac{(q-1)}{2}|G_2|^{g-1} \left( 2^{r}  + \frac{3^r -1}{q^{g-1}} -\frac{2^{2r-1}}{(q+1)^{g-1}} \right)\\
 &=& \frac{1}{2}(q-1)^{g} \left( 2^{r}(q^3-q)^{g-1}  + (3^r -1)(q^2-1)^{g-1} -2^{2r-1} (q^2-q)^{g-1} \right).
 \end{eqnarray*}
\end{eg}

\begin{eg}
$E_3(q)$ is a sum of seven terms, indexed by
\begin{align*}
(d, \lambda_i^{m_{\lambda_i}})  \in   \{ (3 , (1)^1), (1, (1^3)^1),(1,(3)^1), (1,(21)^1),  (1, (1)^1(2)^1),  (1, (1)^1(1^2)^1), (1, (1)^1(1)^1(1)^1)\}.
\end{align*}
Explicitly
\begin{eqnarray*}
 E_3(q) &=& \frac{(q-1)}{2}|G_3|^{g-1} \Big( 2^{r}  + \frac{4^r}{q^{3g-3}} + \frac{4^r}{(q^2+q)^{g-1}} -\frac{4^r}{(q^2+q+1)^{g-1}}   -\frac{6^r}{(q^3+q^2+q)^{g-1}} \\&&+ \frac{8^r}{3(q+1)^{g-1}(q^2+q+1)^{g-1}} - \frac{2^r}{3(q-1)^{g-1}(q^2-1)^{g-1}}   \Big) .
\end{eqnarray*}
\end{eg}

\subsection{The generating function}

The expression (\ref{butexp}) has a beautiful interpretation using plethystic algebra. Let $K = \Q(x)$ be the ring of rational functions and consider $K[[T]]$ the ring of formal power series in $T$.  The \emph{plethystic exponential} $ \mathrm{Exp}:  T K[[T]] \rightarrow 1+TK[[T]]$ is defined by the rule $ \mathrm{Exp}(V +W) = \mathrm{Exp}(V)\mathrm{Exp}(W)$ and $$\mathrm{Exp}( a x^mT^n) = (1-x^mT^n)^{-a}$$ for $a \in \Q$.  The inverse function,  $ \mathrm{Log}:  1+T K[[T]] \rightarrow TK[[T]]$ is called the \emph{plethystic logarithm}.  For each $\lambda \in \PP$ choose  $A_\lambda \in K$, setting $A_{\emptyset} = 1$. By \cite{HLRV} \S 2.3, 
$$ \mathrm{Log}\left(  \sum_{\lambda \in \PP} A_{\lambda} T^{|\lambda|} \right) := \sum_{n \geq 1} U_n T^n $$
where 
\begin{equation}\label{Logexp}
 U_n(x)  =  \sum (-1)^{m_d-1}\frac{\mu(d)}{d} (m_d-1)! \prod_{\lambda} \frac{A_{\lambda}(x^d)^{m_d}}{m_{d,\lambda}!} 
 \end{equation}
and the sum is indexed by the set of functions  
\begin{align*}
m: \Z_{>0} \times \left( \PP \setminus \{\emptyset\} \right) \rightarrow \Z_{\geq 0},  && (d,\lambda) \mapsto m_{d,\lambda}
\end{align*} satisfying $ \sum_{\lambda \in \PP}  d |\lambda| m_{d,\lambda} = n$ and we set $m_d := \sum_{\lambda} m_{d,\lambda}$.

\begin{proof}[Proof of Theorem \ref{BigThm}]

From Theorem \ref{vnformula} we are reduced to proving
\begin{equation}\label{Vnident}
 \sum_{n\geq 1} V_n (q)T^n = \mathrm{Log}  \prod_{i=0}^{\infty} \left( \frac{ \sum_{\lambda} (\sa^+_{\lambda})^r \mathcal{H}_{\lambda'}^{g-1}(q^{2^i}) T^{2^i|\lambda|}}{ \sum_{\lambda} (\sa^-_{\lambda})^r \mathcal{H}_{\lambda'}^{g-1}(q^{2^i}) T^{2^i|\lambda|}}\right)^{\frac{1}{2^i}}.
 \end{equation}
Rewrite the right hand side of (\ref{Vnident}) as
$$\sum_{i=0}^{\infty}  \frac{1}{2^i} \mathrm{Log}\left( \sum_{\lambda} (\sa^+_{\lambda})^r \mathcal{H}_{\lambda'}^{g-1}(q^{2^i}) T^{2^i |\lambda|} \right)  -   \sum_{i=0}^{\infty}  \frac{1}{2^i} \mathrm{Log}  \left( \sum_{\lambda} (\sa^-_{\lambda})^r \mathcal{H}_{\lambda'}^{g-1}(q^{2^i}) T^{2^i |\lambda|}\right) $$
Define $V_{n,+}^i(q)$ and $V_{n,-}^i(q)$ by the formula
$$\sum_{n} V_{n,\pm}^i(q) T^{n}:=   \mathrm{Log}\left( \sum_{\lambda} (\sa_{\lambda}^\pm)^r \mathcal{H}_{\lambda'}^{g-1}(q^{2^i}) T^{2^i|\lambda|} \right).$$
Our task is to prove that for $n \geq 1$,
\begin{equation}\label{vodd}
V_n(q) =  \sum_{i=0}^{\infty}  \frac{1}{2^i}\left( V_{n,+}^i(q)  - V_{n,-}^i(q) \right)
\end{equation}
where the left hand side satisfies (\ref{butexp}). Applying (\ref{Logexp}) we see that
\begin{eqnarray*}
V_{n,\pm}^0(q) &=& \sum_{d|n} \sum_{\sum m_{\lambda} |\lambda| = \frac{n}{d}} (-1)^{m-1}\frac{\mu(d)}{d} (m-1)!  \frac{ \prod_{\lambda} ((\sa_{\lambda}^\pm)^r  \mathcal{H}_{\lambda'}(q^d)^{g-1} )^{m_\lambda}}{\prod_{\lambda} m_{\lambda}!}.
\end{eqnarray*}
More generally $V_{n,\pm}^i(q) = 0$ unless $2^i|n$, in which case
\begin{eqnarray*}
V_{n,\pm}^i(q) &=& \sum_{d| \frac{n}{2^i}} \sum_{\sum m_{\lambda} |\lambda| = \frac{n}{2^id}} (-1)^{m-1}\frac{\mu(d)}{d} (m-1)!  \frac{ \prod_{\lambda} ((\sa_{\lambda}^\pm)^r  \mathcal{H}_{\lambda'}(q^{2^id})^{g-1} )^{m_\lambda}}{\prod_{\lambda} m_{\lambda}!} \\
&=& \sum_{ \substack{ 2^i | d \\ d | n } } \sum_{\sum m_{\lambda} |\lambda| = \frac{n}{d}} (-1)^{m-1}\frac{2^i \mu(d/2^i)}{d} (m-1)!  \frac{ \prod_{\lambda} ((\sa_{\lambda}^\pm)^r  \mathcal{H}_{\lambda'}(q^d)^{g-1} )^{m_\lambda}}{\prod_{\lambda} m_{\lambda}!} \\
\end{eqnarray*}
where in the second expression we reindex replacing $2^id$ with $d$. We deduce (\ref{vodd}) from the identity 
$$ \mu(d) + \mu(d/2) + ... + \mu(d/2^k) = 0$$
that holds when $d$ is even and $d/2^k $ is odd.
\end{proof}

\subsection{Combinatorial verification for special cases $g=0,1$}
If $g=0$ and $r=1$, then $\mathcal{M}_1^\tau$ is a point and $\mathcal{M}_n^\tau  = \emptyset$ so (\ref{GenFunctBig}) reduces to the identity 
\begin{equation*}\label{GenFunctBig0}
\frac{2 q^{\frac{1}{2}}}{1-q} T=  \mathrm{Log}  \prod_{k=0}^{\infty} \left( \frac{ \sum_{\lambda} \sa^+_{\lambda} \mathcal{H}_{\lambda'}^{-1}(q^{2^k}) T^{2^k|\lambda|}}{ \sum_{\lambda} \sa^-_{\lambda} \mathcal{H}_{\lambda'}^{-1}(q^{2^k}) T^{2^k|\lambda|}}\right)^{ \frac{1}{2^k}}.
\end{equation*}
Applying the plethystic exponential, this is equivalent to
\begin{equation}\label{g0telescope}
\prod_{j \geq 1} (1-q^{\frac{2j -1}{2}}T)^{-2} =   \prod_{k=0}^{\infty} \left( \frac{ \sum_{\lambda} \sa^+_{\lambda} \mathcal{H}_{\lambda'}^{-1}(q^{2^k}) T^{2^k|\lambda|}}{ \sum_{\lambda} \sa^-_{\lambda} \mathcal{H}_{\lambda'}^{-1}(q^{2^k}) T^{2^k|\lambda|}}\right)^{ \frac{1}{2^k}}.
\end{equation}
According to (\cite{M} I.3 ex.2)  we have $ \mathcal{H}_\lambda(q)^{-1} = q^{|\lambda|/2} \bar{s}_\lambda(q)$  where $s_{\lambda}(x_1,x_2,....)$ is the Schur function and $\bar{s}_\lambda(q) := s_{\lambda}(1, q ,q^2,...)$.  Therefore using (\ref{identitya+})
\begin{eqnarray*}
 \sum_{\lambda} \sa^+_{\lambda} \mathcal{H}_{\lambda'}^{-1}(q) T^{|\lambda|}  &=& \sum_{\lambda}  \sa^+_{\lambda}  \bar{s}_{\lambda'}(q) (q^{\frac{1}{2}}T)^{|\lambda|} \\
 &=& \left( \sum_{\lambda}  \bar{s}_{\lambda}(q) (q^{\frac{1}{2}}T)^{|\lambda|}\right) \left(  \sum_{n=0}^{\infty} \bar{s}_n(q)  (q^{\frac{1}{2}}T)^{n}\ \right)
  \end{eqnarray*}
 Similar considerations apply to the  $\sa^-_{\lambda}$ series and we deduce using (\ref{snid}) and (\ref{s1nid})
 \begin{eqnarray*}
 \frac{ \sum_{\lambda} \sa^+_{\lambda} \mathcal{H}_{\lambda'}^{-1}(q) T^{|\lambda|} }{\sum_{\lambda} \sa^-_{\lambda} \mathcal{H}_{\lambda'}^{-1}(q) T^{|\lambda|} } & =&  \left(  \sum_{n=0}^{\infty} \bar{s}_n(q)  (q^{\frac{1}{2}}T)^{n}\ \right)  \left(  \sum_{n=0}^{\infty} \bar{s}_{1^n}(q)  (q^{\frac{1}{2}}T)^{n}\ \right) \\
 &=& \left(   \prod_{i=1}^{\infty} (1-q^{i-\frac{1}{2}} T)^{-1}  \right) \left(  \prod_{i=1}^{\infty} (1+q^{i-\frac{1}{2}} T)   \right) \\
 &=&  \prod_{i=1}^{\infty}  \frac{(1-q^{2i-1} T)}{ (1-q^{\frac{2i-1}{2}} T)^{2} }  .
 \end{eqnarray*}
Consequently the right hand side of (\ref{g0telescope}) is a telescoping product verifying the identity. 

When $g=1$,  we have an isomorphism
$$ \mathcal{M}_n  \cong  \C^{\times} \times \C^{\times} $$
for all $n \geq 1$  (see  \cite{HRV} Theorem 2.2.17). When $r=1$, this isomorphism can be chosen so that $ \tau(\alpha, \beta) = (\beta^{-1}, \alpha^{-1})$ so that $$\mathcal{M}_n^{\tau} \cong \C^{\times},$$ and $E(\mathcal{M}_n^{\tau};q) = q-1$.  When $r=2$, this isomorphism can be chosen so that $ \tau(\alpha, \beta) = (\alpha, \beta^{-1})$ so that $$\mathcal{M}_n^{\tau} \cong \C^{\times} \times \{ \pm 1\},$$ and $E(\mathcal{M}_n^{\tau};q) = 2(q-1)$.  
Substituting into (\ref{GenFunctBig}) and exponentiating, we get identities
\begin{equation}\label{GenFunctBig2}
 \prod_{n=1}^{\infty} (1-T^n)^{-2}=  \prod_{k=0}^{\infty} \left( \frac{ \sum_{\lambda} \sa^+_{\lambda} T^{2^k|\lambda|}}{ \sum_{\lambda} \sa^-_{\lambda} T^{2^k|\lambda|}}\right)^{ \frac{1}{2^k}},
\end{equation}
\begin{equation}\label{GenFunctBig3}
\prod_{n=1}^{\infty} (1-T^n)^{-4} =  \prod_{k=0}^{\infty} \left( \frac{ \sum_{\lambda} (\sa^+_{\lambda})^2 T^{2^k|\lambda|}}{ \sum_{\lambda} (\sa^-_{\lambda})^2 T^{2^k|\lambda|}}\right)^{ \frac{1}{2^k}}.
\end{equation}
We have
\begin{equation*}
  \sum_{\lambda} \sa^-_{\lambda} T^{|\lambda|} =  \sum_{\lambda} (\sa^-_{\lambda})^2 T^{|\lambda|}   =    \prod_{n=1}  \left(  \sum_{k=0}^{\infty}   T^{2 k n} \right) 
  = \prod_{n \geq 1} ( 1 - T^{2n})^{-1}
  \end{equation*}
and 
\begin{equation*} 
 \sum_{\lambda} \sa^+_{\lambda} T^{|\lambda|}  =   \prod_{n \geq 1}  \left(  \sum_{k=0}^{\infty}  (k+1)  T^{ k n}   \right) =   \prod_{n \geq 1} ( 1-T^{n})^{-2}
\end{equation*}
so that
\begin{eqnarray*}
 \frac{ \sum_{\lambda} \sa^+_{\lambda} T^{|\lambda|}} {\sum_{\lambda} \sa^-_{\lambda} T^{|\lambda|}}   &= &  \prod_{ n \geq 1 }  (1-T^n)^{-2}(1-T^{2n})\\
 \end{eqnarray*}
and the right hand side of  (\ref{GenFunctBig2})  is a telescoping product verifying the identity. Similarly,  
\begin{equation*} 
 \sum_{\lambda} (\sa^+_{\lambda})^2 T^{|\lambda|}  =   \prod_{n \geq 1}  \left(  \sum_{k=0}^{\infty}  (k+1)^2  T^{ k n}   \right) =   \prod_{n \geq 1}  ( 1-T^{n})^{-4}(1 - T^{2n}) 
\end{equation*}
so
$$ \frac{ \sum_{\lambda} (\sa^+_{\lambda})^2 T^{|\lambda|}} {\sum_{\lambda} (\sa^-_{\lambda})^2 T^{|\lambda|}}  =  \prod_{n \geq 1}  ( 1-T^{n})^{-4} (1 - T^{2n})^2  $$
and the right hand side of  (\ref{GenFunctBig3})  is a telescoping product verifying the identity.

\section{The E-polynomial of $\mathcal{M}_{n,w}^\tau$}\label{ccm}
Recall from Corollary \ref{FormulaCor} that
\begin{eqnarray*}
E_n^k(q) &:=& |G_n|^{g-1}  \sum_{\chi_\Lambda \in \Irr G}  \frac{\chi_{\Lambda}(\xi )}{\chi_{\Lambda}(1)^{g-1}} (\ssb^+_{\Lambda})^{r-k}(\ssb^-_{\Lambda})^{k}.
\end{eqnarray*}

\begin{lem}
Suppose $\Lambda = (\theta_1^{\lambda_1},..., \theta_m^{\lambda_m})$ where  $d_{\theta_1}= ... = d_{\theta_m} =d \equiv 1~mod~2$, and $ \langle \theta_1, -1 \rangle_d =... =  \langle \theta_m, -1 \rangle_d = \pm 1$ are both constant for all $i =1,...,m$ . Then
\begin{align*}
\ssb_{\Lambda}^+ &=  \frac{  1 }{2}\left( \prod_{i=1}^m \sa^+_{\lambda_i}  +  \prod_{i=1}^m \sa^-_{\lambda_i}  \right) &
\ssb_{\Lambda}^- &=  \pm \frac{  1 }{2}\left(  \prod_{i=1}^m \sa^+_{\lambda_i} -    \prod_{i=1}^m \sa^-_{\lambda_i}  \right)
\end{align*}
\end{lem}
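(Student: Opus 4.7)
The plan is to reduce the lemma to tracking the effect of ``tensor by $\alpha\circ\det$'' on Green parameters. By (\ref{Ftilde}) we have
$$ b_\Lambda^{\pm} = \tfrac{1}{2}(a_\Lambda \pm \tilde a_\Lambda),$$
and Propositions \ref{primaryepsilon} and \ref{generalepsilon} already give $a_\Lambda = \prod_{i=1}^m a^{\epsilon}_{\lambda_i}$, where $\epsilon = \pm 1$ is the common value of $\langle\theta_i,-1\rangle_d$.  Thus the entire lemma reduces to showing that $\tilde a_\Lambda = \prod_{i=1}^m a^{-\epsilon}_{\lambda_i}$, after which the stated half-sum/half-difference formulas follow immediately.

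The first step I would carry out is to translate the pointwise identity $\tilde F = F \cdot (\alpha\circ\det)$ into the equality $\tilde a_\Lambda = a_{\Lambda'}$, where $\chi_{\Lambda'} = \chi_\Lambda \otimes (\alpha\circ\det)^{-1}$.  Next I would describe $\Lambda'$ concretely: the standard fact (see \cite{M}) is that tensoring an irreducible character of $GL_n(\F_q)$ by a linear character $\alpha\circ\det$ with $\alpha \in L_1$ corresponds to translating each Frobenius orbit in the Green parameter by $\alpha^{\pm 1}$.  Explicitly, $\Lambda'$ is obtained from $\Lambda$ by replacing each $\theta_i$ with $\theta_i \cdot \alpha^{-1}$, while leaving the attached $\lambda_i$ unchanged.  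Since $\alpha \in L_1$ is Frobenius-fixed, multiplication by $\alpha^{-1}$ is a bijection of $\Theta$ preserving each $\Theta_d$, so the orbits $\theta_i \cdot \alpha^{-1}$ remain distinct and of degree $d$, and Propositions \ref{primaryepsilon}, \ref{generalepsilon} apply to $\Lambda'$.

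The key observation is then that the shift flips the sign.  Because $\alpha$ is injective on $\F_q^\times$ and $q$ is odd, we have $\alpha(-1) = -1$, hence
$$\langle \theta \cdot \alpha^{-1}, -1\rangle_d = \alpha(-1)^{-1} \cdot \langle\theta, -1\rangle_d = -\langle\theta,-1\rangle_d,$$
so the common sign of the shifted orbits is $-\epsilon$.  Therefore $\tilde a_\Lambda = \prod_{i=1}^m a^{-\epsilon}_{\lambda_i}$, and substituting into $b_\Lambda^{\pm} = \tfrac{1}{2}(a_\Lambda \pm \tilde a_\Lambda)$ gives
\begin{align*}
b_\Lambda^+ &= \tfrac{1}{2}\Bigl( \prod_{i=1}^m a^+_{\lambda_i} + \prod_{i=1}^m a^-_{\lambda_i} \Bigr), \\
b_\Lambda^- &= \tfrac{\epsilon}{2}\Bigl( \prod_{i=1}^m a^+_{\lambda_i} - \prod_{i=1}^m a^-_{\lambda_i} \Bigr),
\end{align*}
which is the claim with the $\pm$ in $b_\Lambda^-$ equal to $\langle \theta_i, -1\rangle_d$.

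The main obstacle I anticipate is the first step: rigorously establishing that tensoring with $\alpha\circ\det$ acts on Green parameters by translation by $\alpha$, and in particular that the translated $\Lambda'$ still decomposes as $m$ primary blocks of common degree $d$.  This is standard but not recorded earlier in the paper, so I would need either to give a short derivation via parabolic induction from rational tori or to pin down a precise citation.  The rest is definition-unravelling and sign bookkeeping; I note that the hypothesis ``$d$ odd'' plays no explicit role in my argument (only $q$ odd is used, so that $-1 \neq 1$ in $\F_q^\times$), suggesting that the lemma actually holds for all $d$.
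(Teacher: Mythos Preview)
Your approach is essentially identical to the paper's: use $b_\Lambda^{\pm} = \tfrac12(a_\Lambda \pm \tilde a_\Lambda)$ from (\ref{Ftilde}), identify $\tilde a_\Lambda$ with $a_{\Lambda'}$ where $\Lambda'$ is obtained by translating each $\theta_i$ by $\alpha^{\pm 1}$, and then invoke Propositions \ref{primaryepsilon} and \ref{generalepsilon}.

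There is one slip in your sign computation, and it is exactly where the hypothesis ``$d$ odd'' enters. When you write
\[
\langle \theta \cdot \alpha^{-1}, -1\rangle_d = \alpha(-1)^{-1} \cdot \langle\theta, -1\rangle_d,
\]
you have silently evaluated $\langle \alpha^{-1}, -1\rangle_d$ as $\alpha(-1)^{-1}$. But $\alpha \in L_1$, and the pairing $\langle\cdot,\cdot\rangle_d$ is computed by first passing through the norm $\Nm_{d,1}$: for $x \in M_d$ one has $\langle \alpha, x\rangle_d = \alpha\bigl(x^{(q^d-1)/(q-1)}\bigr)$. Since $q$ is odd, $(q^d-1)/(q-1) = 1+q+\cdots+q^{d-1}$ has the parity of $d$, so
\[
\langle \alpha, -1\rangle_d = \alpha\bigl((-1)^d\bigr) = \langle \alpha, -1\rangle_1^{\,d}.
\]
This equals $-1$ precisely when $d$ is odd; for even $d$ it is $+1$ and the sign does \emph{not} flip. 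This is exactly the computation the paper records, and it shows that your closing remark --- that the hypothesis ``$d$ odd'' plays no role --- is mistaken. With this correction your argument goes through and matches the paper's.
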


\begin{proof}
 Choose $\alpha \in L_1$ such that $\langle \alpha, -1\rangle_1 = -1$. Tensoring by $\chi = \chi_{\alpha^n}$ sends $(\theta_1^{\lambda_1},..., \theta_m^{\lambda_m})$ to $ ( (\alpha \theta_1)^{\lambda_1},..., (\alpha \theta_m)^{\lambda_m})$.  Since $d_{\theta_i} = d$ is odd, we have (see \cite{KG})
$$ \langle  \alpha \theta_i, -1 \rangle_d  = \langle \alpha, -1 \rangle_d  \langle \theta_i, -1 \rangle_d  =    \langle \alpha, -1\rangle_1^d  \langle \theta_i, -1 \rangle_d  = - \langle \theta_i, -1 \rangle_d.$$
The formula follows from (\ref{Ftilde}) and our earlier calculation of $\sa_\Lambda$. 
\end{proof}

The following is proven in similar fashion to Theorem \ref{vnformula}. 

\begin{thm}\label{conncompformthm}
Suppose that $(\Sigma, \tau)$ is genus $g$ Riemann surface with anti-holomorphic involution $\tau$ such that $ \Sigma^\tau = \coprod_r S^1$ is non-empty. Let $w: \pi_0(\Sigma^\tau) \rightarrow \{\pm 1\}$ send $k$ many elements to $-1$ where $k$ is odd. Then the E-polynomial of the path component $\mathcal{M}_{n,w}^\tau$ equals $E(\mathcal{M}_{n,w}^\tau; x,y) = E_n^k(xy)$ where

$$ E_n^k(q) :=   \frac{1}{2^r}  (q-1)(-q^{\frac{1}{2}})^{n^2(g-1)}  V_{n}^k(q) $$
and
\begin{equation}\label{Vnkform}
V_{n}^k(q) := \sum_{ \substack{ \text{ odd }d | n \\ \sum m_{\lambda} |\lambda| = \frac{n}{d} } }  (-1)^{m-1}\frac{\mu(d)}{d} \frac{(m-1)!}{\prod_{\lambda} m_{\lambda}!} \left( \sa^+_\sigma   + \sa^-_\sigma \right)^{r-k} \left(  \sa^+_{\sigma}  - \sa^-_{\sigma}\right)^k  \mathcal{H}_{\sigma'}^{g-1}(q^d)
\end{equation}
\begin{align*}
m &:= \sum_{\lambda} m_\lambda & \sa^{\pm}_{\sigma}  &:= \prod_{\lambda} (\sa^{\pm}_{\lambda})^{m_\lambda}   & \mathcal{H}_{\sigma'}(q) &:= \prod_{\lambda} \mathcal{H}_{\lambda'}(q)^{m_\lambda}.
\end{align*} 
\end{thm}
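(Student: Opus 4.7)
The strategy mirrors the proof of Theorem \ref{vnformula}, with the new ingredient being the lemma immediately preceding the statement that expresses $b^{\pm}_\Lambda$ in terms of products of $a^{\pm}_{\lambda_i}$. Starting from Corollary \ref{FormulaCor}, I rewrite
\begin{equation*}
E_n^k(q) = (-q^{\frac{1}{2}})^{n^2(g-1)}\sum_{\text{signed types }\sigma} (b^+_\sigma)^{r-k}(b^-_\sigma)^k\,\mathcal{H}_{\sigma'}^{g-1}(q)\sum_{\Lambda\in\sigma}\Delta_\Lambda(\xi),
\end{equation*}
in exact analogy with equation (\ref{eqformula}), using that the numbers $b^{\pm}_\Lambda$, like the $a_\Lambda$, depend only on the signed type of $\Lambda$. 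By Lemma \ref{incexclcalc}, the inner sum $\sum_{\Lambda\in\sigma}\Delta_\Lambda(\xi)$ vanishes unless $\sigma$ is supported entirely on Frobenius orbits of a single odd degree $d$ and a single sign $\epsilon\in\{\pm 1\}=\langle\theta,-1\rangle_d$. Fortunately this is precisely the regime covered by the $b^{\pm}_\Lambda$ lemma.

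For each such $\sigma$ arising from a partition-valued datum $(m_{d,\lambda})$, the lemma gives
\begin{equation*}
b^+_\Lambda = \tfrac{1}{2}\bigl(a^+_\sigma + a^-_\sigma\bigr),\qquad b^-_\Lambda = \tfrac{\epsilon}{2}\bigl(a^+_\sigma - a^-_\sigma\bigr),
\end{equation*}
with $a^\pm_\sigma := \prod_\lambda (a^\pm_\lambda)^{m_{d,\lambda}}$. Hence
\begin{equation*}
(b^+_\sigma)^{r-k}(b^-_\sigma)^k = \frac{\epsilon^k}{2^r}\bigl(a^+_\sigma+a^-_\sigma\bigr)^{r-k}\bigl(a^+_\sigma-a^-_\sigma\bigr)^{k}.
\end{equation*}
Meanwhile Lemma \ref{incexclcalc} produces
\begin{equation*}
\sum_{\Lambda\in\sigma}\Delta_\Lambda(\xi) = \epsilon\,(-1)^{m-1}\frac{\mu(d)}{d}\frac{(m-1)!}{\prod_\lambda m_{d,\lambda}!}\,\frac{q-1}{2},\qquad m:=\sum_\lambda m_{d,\lambda}.
\end{equation*}
Since $k$ is odd by hypothesis, the factor $\epsilon^{k+1}=\epsilon^2=1$, so the two signed types $\sigma^+$ and $\sigma^-$ arising from the same combinatorial datum contribute equally rather than cancelling, producing an overall factor of $2$.

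Collecting everything and using that $\mathcal{H}_{\sigma'}(q)^{g-1}=\mathcal{H}_{\sigma'}(q^d)^{g-1}$ in the restricted regime (since each $\theta$ has degree $d$, the definition (\ref{Gquotdegree}) combines the per-orbit factors into $\prod_\lambda\mathcal{H}_{\lambda'}(q^d)^{m_{d,\lambda}}$), I obtain
\begin{equation*}
E_n^k(q) = (-q^{\frac{1}{2}})^{n^2(g-1)}\cdot\frac{q-1}{2}\cdot\frac{2}{2^r}\sum_{\substack{d|n\text{ odd}\\\sum m_\lambda|\lambda|=n/d}}\!\!\!(-1)^{m-1}\frac{\mu(d)}{d}\frac{(m-1)!}{\prod m_\lambda!}\bigl(a^+_\sigma+a^-_\sigma\bigr)^{r-k}\bigl(a^+_\sigma-a^-_\sigma\bigr)^k\mathcal{H}_{\sigma'}^{g-1}(q^d),
\end{equation*}
which is the claimed formula. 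The final step is to verify that $E_n^k(q)$ is a polynomial in $q$ (so that Corollary \ref{FormulaCor} applies to identify it with the E-polynomial); this follows exactly as in the proof of Theorem \ref{vnformula}, since each summand is already a polynomial expression. The principal subtlety --- and the only real point where care is needed --- is the sign bookkeeping between the $\epsilon$ in $b^-_\Lambda$ and the $\pm$ in Lemma \ref{incexclcalc}; the crucial observation is that the \emph{product} of these two signs is $\epsilon^{k+1}$, which is $+1$ precisely when $k$ is odd, explaining why the statement of the theorem is restricted to odd $k$ (consistent with $\prod w(i)=-1$ from Corollary \ref{compscon}).
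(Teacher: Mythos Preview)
Your proof is correct and follows exactly the route the paper intends: it is the spelled-out version of the paper's one-line indication that Theorem \ref{conncompformthm} ``is proven in similar fashion to Theorem \ref{vnformula}.'' The key points you identify---that $b^\pm_\Lambda$ depend only on signed type, that Lemma \ref{incexclcalc} restricts the sum to signed types supported on a single odd degree and sign (where the preceding lemma applies), and that the sign $\epsilon^{k+1}=1$ for odd $k$ makes the two signed-type contributions add rather than cancel---are precisely the content of the omitted argument.
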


Using the identity
$$ (x + y)^{r-k} (x - y)^k = \sum_{j=0}^r c_j x^{r-j} y^j$$
$$ c_j := \sum_{l=0}^j (-1)^l { k \choose l} { r-k \choose j-l}   $$
we derive the generating function.
\begin{cor}
With $V_n^k(q)$ as above we have
$$ \sum_{n=1}^{\infty} V_n^k(q)T^n =  \mathrm{Log} \prod_{i=0}^\infty \prod_{j=0}^r \left( \sum_{\lambda}  (\sa_\lambda^+)^{r-j} (\sa_\lambda^-)^j \mathcal{H}_{\lambda'}^{g-1}(q^{2^i})  T^{2^i|\lambda|}\right)^{c_j/2} .$$ 
\end{cor}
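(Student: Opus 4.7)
The plan is to adapt the plethystic-logarithm argument of Theorem \ref{BigThm} directly, using the polynomial expansion $(a_\sigma^+ + a_\sigma^-)^{r-k}(a_\sigma^+ - a_\sigma^-)^{k} = \sum_{j=0}^{r} c_{j}(a_\sigma^+)^{r-j}(a_\sigma^-)^{j}$ to split $V_n^k(q)$. Substituting this identity into \eqref{Vnkform} and interchanging the order of the two summations yields
\begin{equation*}
V_n^k(q) = \sum_{j=0}^{r} c_j\, W_{n,j}(q),
\end{equation*}
where $W_{n,j}(q)$ is exactly the expression defining $V_n(q)$ in Theorem \ref{vnformula} with the factor $(a_\sigma^+)^{r} - (a_\sigma^-)^{r}$ replaced by $(a_\sigma^+)^{r-j}(a_\sigma^-)^{j}$.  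It therefore suffices to produce a plethystic-logarithm formula for each generating series $\sum_{n \geq 1} W_{n,j}(q)\,T^{n}$.

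For each pair $(i,j)$ with $i \geq 0$ and $0 \leq j \leq r$, I would define auxiliary functions $W_{n,j}^{i}(q)$ by
\begin{equation*}
\sum_{n \geq 1} W_{n,j}^{i}(q)\,T^{n} := \mathrm{Log}\!\left( \sum_{\lambda \in \PP} (a_\lambda^+)^{r-j}(a_\lambda^-)^{j}\,\mathcal{H}_{\lambda'}^{g-1}(q^{2^{i}})\,T^{2^{i}|\lambda|}\right).
\end{equation*}
The expansion \eqref{Logexp} unpacks each $W_{n,j}^{i}(q)$ as a M\"obius-weighted sum over the divisors $d$ of $n$ satisfying $2^{i}\mid d$.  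Reproducing the last step of the proof of Theorem \ref{BigThm}, the arithmetic identity $\mu(d) + \mu(d/2) + \cdots + \mu(d/2^{\ell}) = 0$ whenever $d$ is even and $d/2^{\ell}$ is odd then telescopes the combined sum $\sum_{i \geq 0} 2^{-i}\,W_{n,j}^{i}(q)$ onto the odd-divisor contributions only, yielding the required identity
\begin{equation*}
W_{n,j}(q) = \sum_{i=0}^{\infty} \frac{1}{2^{i}}\,W_{n,j}^{i}(q).
\end{equation*}

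Assembling these ingredients, the generating series becomes
\begin{equation*}
\sum_{n \geq 1} V_n^k(q)\,T^{n} = \sum_{j=0}^{r} c_{j} \sum_{i=0}^{\infty} \frac{1}{2^{i}} \sum_{n \geq 1} W_{n,j}^{i}(q)\,T^{n},
\end{equation*}
and converting the double outer sum into a product of exponentiated plethystic-logarithm factors via $a\,\mathrm{Log}(A) = \mathrm{Log}(A^{a})$ and $\mathrm{Log}(A) + \mathrm{Log}(B) = \mathrm{Log}(AB)$ produces the claimed formula.  The only real obstacle is bookkeeping --- carefully tracking the M\"obius weights across the nested sums over $i$, $j$, and the divisors of $n$ --- but this bookkeeping is essentially verbatim the one carried out in the proof of Theorem \ref{BigThm}.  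Each $W_{n,j}$ is handled by the same machinery, the inputs to $\mathrm{Log}$ being obtained from those used previously merely by replacing $(a_\lambda^+)^{r}$ and $(a_\lambda^-)^{r}$ with mixed monomials in $a_\lambda^{\pm}$, so no new analytic input is required once the binomial expansion and the linear combination with coefficients $c_j$ are in place.
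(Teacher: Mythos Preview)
Your proposal is correct and follows exactly the approach the paper intends: the paper does not spell out a proof but simply records the expansion $(x+y)^{r-k}(x-y)^k=\sum_j c_j x^{r-j}y^j$ and asserts that the generating function is then derived, implicitly by rerunning the plethystic-logarithm/M\"obius-telescoping argument of Theorem~\ref{BigThm} term by term. Your write-up makes this explicit, correctly noting that $(a_\sigma^+)^{r-j}(a_\sigma^-)^j=\prod_\lambda\bigl((a_\lambda^+)^{r-j}(a_\lambda^-)^j\bigr)^{m_\lambda}$ so that each $W_{n,j}$ fits the template of \eqref{Logexp} with $A_\lambda=(a_\lambda^+)^{r-j}(a_\lambda^-)^j\mathcal{H}_{\lambda'}^{g-1}$; the only cosmetic point is that your argument in fact produces exponents $c_j/2^i$ rather than $c_j/2$ in the final product, matching the $1/2^k$ in Theorem~\ref{BigThm}.
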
 

Let $\mathcal{A}_0^\tau \cong (\C^{\times})^g$ be the identity component of $\mathcal{A}^\tau$ (see \S \ref{Aref}).

\begin{cor}\label{euler char}
If $g \geq 2$, then the Euler characteristic of $ \mathcal{M}_{n,w}/\!/\mathcal{A}_0^\tau $ is zero if $n$ is even and $\mu(n) n^{g-2}$ if $n$ is odd. 
\end{cor}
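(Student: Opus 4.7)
The plan is to extract $\chi(\mathcal{M}_{n,w}^\tau /\!/ \mathcal{A}_0^\tau)$ by specializing the E-polynomial formula from Theorem \ref{conncompformthm} at $q=1$. I would proceed in three steps.

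First, I would justify the identity
\[ E(\mathcal{M}_{n,w}^\tau /\!/ \mathcal{A}_0^\tau; q) \;=\; \frac{E_n^k(q)}{(q-1)^g} \;=\; \frac{1}{2^r}(-q^{\frac{1}{2}})^{n^2(g-1)} \frac{V_n^k(q)}{(q-1)^{g-1}}. \]
To see this, observe that $\mathcal{A}_0^\tau \cong (\C^\times)^g$ acts on $\mathcal{M}_{n,w}^\tau$ with constant finite kernel $\mu_n \cong (\Z/n)^g$ (the central $n$-th roots of unity which act trivially on the $GL_n$-conjugacy class), and the induced action of $\mathcal{A}_0^\tau / \mu_n \cong (\C^\times)^g$ is free.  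This makes the quotient map a principal $(\C^\times)^g$-bundle, so multiplicativity of E-polynomials yields the displayed equation.

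Next, I would analyze the order of vanishing of each summand of $V_n^k(q)$ at $q=1$. Since the normalized hook polynomial $\mathcal{H}_\lambda(q)$ in \eqref{hookploy} contains one factor $(1-q^h)$ per box of $\lambda$, it vanishes at $q=1$ to order exactly $|\lambda|$; consequently $\mathcal{H}_{\sigma'}^{g-1}(q^d)$ vanishes to order $(g-1)\cdot n/d$, while the remaining factors in each summand of \eqref{Vnkform} are constant in $q$. For $g \geq 2$, only a term with $n/d \leq 1$, i.e., $d=n$, can survive division by $(q-1)^{g-1}$. When $n$ is even no odd divisor of $n$ equals $n$, so every term vanishes to strictly greater order and the Euler characteristic is zero.

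Finally, when $n$ is odd, the surviving term has $d=n$, $m_{(1)}=1$, and all other $m_\lambda=0$. Using $a^+_{(1)}=2$ and $a^-_{(1)}=0$ from \eqref{a+lam}--\eqref{a-lam}, the factor $(a^+_\sigma + a^-_\sigma)^{r-k}(a^+_\sigma - a^-_\sigma)^k$ equals $2^r$ independently of $k$, while the hook-length residue yields $\lim_{q\to 1} \mathcal{H}_{(1)}^{g-1}(q^n)/(q-1)^{g-1} = (-n)^{g-1}$. Combined with $(-1)^{n^2(g-1)} = (-1)^{g-1}$ for odd $n$, all signs cancel and the Euler characteristic equals $\mu(n)n^{g-2}$.

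The main obstacle is the first step: verifying that the generic stabilizer of the $\mathcal{A}_0^\tau$-action on $\mathcal{M}_{n,w}^\tau$ is exactly $\mu_n$ and that the quotient by the residual free torus action truly realizes the E-polynomial division. This should follow from the explicit torus action formulas recorded after \eqref{n=1eg} together with the irreducibility considerations used in Proposition \ref{quotienteplo}. Once this structural point is settled, the rest is a combinatorial limit calculation in which the only care required is to track the signs arising from $(-q^{\frac{1}{2}})^{n^2(g-1)}$, the $(-1)^{m-1}$ in \eqref{Vnkform}, and the hook residues.
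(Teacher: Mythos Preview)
Your proposal is correct and follows essentially the same route as the paper's proof: both evaluate $E_n^k(q)/(q-1)^g$ at $q=1$, use the vanishing of $\mathcal{H}_{\lambda'}(q^d)$ at $q=1$ to isolate the single surviving term $d=n$, $\lambda=(1)$, and compute the residue. Your vanishing-order argument (order exactly $(g-1)n/d$) is sharper than the paper's (divisible by $(q-1)$, and by $(q-1)^2$ unless $\lambda=(1)$), but both suffice; you are also more careful than the paper in flagging the first step---the identification $E(\mathcal{M}_{n,w}^\tau/\!/\mathcal{A}_0^\tau;q)=E_n^k(q)/(q-1)^g$---as the point requiring justification, which the paper simply asserts.
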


\begin{proof}
The Euler characteristic equals 
$$ \chi ( \mathcal{M}_{n,w}/\!/\mathcal{A}_0^\tau ) =  \left. \frac{E_n^k(q)}{(q-1)^g}\right|_{q=1}. $$
It is clear that from (\ref{hookploy}) that $H_{\lambda'}(q^d)$ is divisible by $(q-1)$ for all partitions $\lambda$ with $|\lambda| \geq 1$ and is divisible by $(q-1)^2$ unless $\lambda = (1)$. Comparing with Theorem \ref{conncompformthm} we see that if $n$ is odd, and $g \geq 2$ then
\begin{eqnarray*} 
\left. \frac{E_n^k(q)}{(q-1)^g}\right|_{q=1} &=&  \left. \frac{1}{2^r}  (q-1)(-q^{\frac{1}{2}})^{n^2(g-1)}  \frac{\mu(n)}{n}  \left( \sa^+_{(1)}   + \sa^-_{(1)} \right)^{r-k} \left(  \sa^+_{(1)}  - \sa^-_{(1)}\right)^k  \frac{\mathcal{H}_{(1)}^{g-1}(q^n)}{(q-1)^g} \right|_{q=1}\\
&=&   \mu(n) n^{g-2} 
\end{eqnarray*}
whereas if $n$ is even then
\begin{eqnarray*} 
\left. \frac{E_n^k(q)}{(q-1)^g}\right|_{q=1} &=& 0. \end{eqnarray*}

\end{proof}

\begin{eg}
We have
\begin{eqnarray*}
E_1^k(q) &=& (q-1) |G_1|^{g-1} = (q-1)^g.
\end{eqnarray*}

\begin{eqnarray*}
 \frac{E_2^k(q)}{(q-1)^g}  &=& \frac{1}{2^r} \left( \frac{|G_2|}{q-1} \right)^{g-1} \left(2^r + \frac{(3+1)^{r-k}(3-1)^k}{q^{g-1}} - \frac{4^r}{2(q+1)^{g-1}} \right)\\
 &=&  (q^3-q)^{g-1}+ 2^{r-k}(q^2-1)^{g-1} - 2^{r-1}(q^2-q)^{g-1}.
\end{eqnarray*} 

\begin{eqnarray*}
\frac{E_3^k(q)}{(q-1)^g} &=& (q^6-q^3)^{g-1}(q^2-1)^{g-1} +2^r(q^3-1)^{g-1}(q^2-1)^{g-1} \\
&&+2^r (q^5-q^2)^{g-1}(q+1)^{g-1} + \frac{4^r}{3}q^{3g-3}(q-1)^{2g-2} \\
&& - \frac{1}{3}(q^5+q^4+q^3)^{g-1} - 2^r(q^4-q^3)^{g-1}(q^2-1)^{g-1}-3^r(q^3-q^2)^{g-1}(q^2-1)^{g-1} .
\end{eqnarray*}
\end{eg}

Note that $E_n^k(q)$ is independent of $k$ when $n$ is odd. This is a consequence of the fact that the path components of $\mathcal{M}_{n}^\tau$ are pairwise isomorphic by Remark \ref{simpacm}.

\end{document}